\newtheorem{theorem}{Theorem}[section]
\newtheorem{lemma}[theorem]{Lemma}
\newtheorem{corollary}[theorem]{Corollary}
\newtheorem{remark}[theorem]{Remark}
\newenvironment{proof}{\textit{Proof.}}
\definecolor{lightblue}{rgb}{0.22,0.45,0.70}
\definecolor{MyDarkGreen}{rgb}{0,0.45,0}
\newcommand{\RED} [1]{#1}
\newcommand{\BLUE}[1]{#1}
\def\trait #1 #2 #3 {\vrule width #1pt height #2pt depth #3pt}
\def\fin{\hfill
        \trait .3 5 0
        \trait 5 .3 0
        \kern-5pt
        \trait 5 5 -4.7
        \trait 0.3 5 0
\medskip}
\newcommand{\ENDPROOF}{\fin}
\newcommand{\ASSUM}[2]{(\textsf{#1#2})}  
\newcommand{\TERM} [2]{\textsf{#1}_{#2}}  
\newcommand{\DOFS} [2]{\textbf{dofs}_{#2}\big({#1}\big)}
\newcommand{\dx}{\,d\xs}
\newcommand{\dy}{\,d\ys}
\newcommand{\dt}{\,d\ts}
\newcommand{\KER} {\textrm{ker}}
\newcommand{\RANK}{\textrm{rank}}
\newcommand{\REAL}{\mathbbm{R}}
\newcommand{\restrict}[2]{{#1}_{|{#2}}}
\newcommand{\PGRAPH}[1]{\medskip\noindent\textbf{#1}.}
\newcommand{\EOD}{\end{document}}
\newcommand{\EOPD}{\end{proof}\end{document}}
\newcommand{\esssup}{\textrm{ess~sup}}
\definecolor{MyDarkGreen}{rgb}{0,0.45,0}
\newcommand{\bv}{\mathbf{b}}
\newcommand{\ev}{\mathbf{e}}
\newcommand{\fv}{\mathbf{f}}
\newcommand{\vv}{\mathbf{v}}
\newcommand{\xv}{\mathbf{x}}
\newcommand{\yv}{\mathbf{y}}
\newcommand{\xvP}{\xv_{\P}}
\newcommand{\xvE}{\xv_{\E}}
\newcommand{\bvh}{\bv_{\hh}}
\newcommand{\fvh}{\fv_{\hh}}
\newcommand{\as}{a}
\newcommand{\bs}{b}
\newcommand{\ds}{d}
\newcommand{\es}{e}
\newcommand{\fs}{f}
\newcommand{\hs}{h}
\newcommand{\ks}{k}
\newcommand{\ms}{m}
\newcommand{\ns}{n}
\newcommand{\ps}{p}
\newcommand{\qs}{q}
\newcommand{\rs}{r}
\newcommand{\ts}{t}
\newcommand{\us}{u}
\newcommand{\vs}{v}
\newcommand{\ws}{w}
\newcommand{\xs}{x}
\newcommand{\ys}{y}
\newcommand{\zs}{z}
\newcommand{\Cs}{C}
\newcommand{\Ds}{D}
\newcommand{\Es}{E}
\newcommand{\Fs}{F}
\newcommand{\Is}{I}
\newcommand{\Js}{J}
\newcommand{\Ms}{M}
\newcommand{\Ns}{N}
\newcommand{\Ps}{P}
\newcommand{\Ss}{S}
\newcommand{\Ts}{T}
\newcommand{\Us}{U}
\newcommand{\Vs}{V}
\newcommand{\Xs}{X}
\newcommand{\sPa}{S^{\P}_{a}}
\newcommand{\sPm}{S^{\P}_{m}}
\newcommand{\xsP}{\xs_{\P}}
\newcommand{\ysP}{\ys_{\P}}
\newcommand{\xsE}{\xs_{\E}}
\newcommand{\ysE}{\ys_{\E}}
\newcommand{\FsP}[1]{\Fs^{\P}_{#1}}
\newcommand{\Css}{\Cs^*}
\newcommand{\vsp} {\vs_1}
\newcommand{\vspp}{\vs_2}
\newcommand{\matD}{\mathsf{D}}
\newcommand{\matH}{\mathsf{H}}
\newcommand{\matM}{\mathsf{M}}
\newcommand{\calA}{\mathcal{A}}
\newcommand{\calB}{\mathcal{B}}
\newcommand{\calE}{\mathcal{E}}
\newcommand{\calG}{\mathcal{G}}
\newcommand{\calH}{\mathcal{H}}
\newcommand{\calI}{\mathcal{I}}
\newcommand{\calK}{\mathcal{K}}
\newcommand{\calM}{\mathcal{M}}
\newcommand{\calO}{\mathcal{O}}
\newcommand{\calP}{\mathcal{P}}
\newcommand{\calT}{\mathcal{T}}
\newcommand{\Ash}{\calA_{\hh}}
\newcommand{\asP} {\as^{\P}}
\newcommand{\PinP}[1]{\Pi^{\nabla,\P}_{#1}}
\newcommand{\Pin} [1]{\Pi^{\nabla}_{#1}}
\newcommand{\PizP}[1]{\Pi^{0,\P}_{#1}}
\newcommand{\Piz} [1]{\Pi^{0}_{#1}}
\newcommand{\PiLSP}[1]{\TILDE{\Pi}^{\P}_{#1}}
\newcommand{\PiLSsP}[1]{\TILDE{\Pi}^{\P,*}_{#1}}
\newcommand{\PiLS} [1]{\TILDE{\Pi}_{#1}}
\newcommand{\matPiLSP}{\mathbf{\widetilde{\Pi}}^{\P,\ast}}
\newcommand{\WS}[1] {W^{#1}}
\newcommand{\LS}[1] {L^{#1}}
\newcommand{\HS}[1] {H^{#1}}
\newcommand{\CS}[1] {C^{#1}}
\newcommand{\HSzr}[1]{H^{#1}_{0}}
\newcommand{\PS}[1] {\mathbbm{P}_{#1}}
\newcommand{\qscalP} [2]{[#1,#2]_{\P}}
\newcommand{\qScalP} [2]{\big[#1,#2\big]_{\P}}
\newcommand{\qSCALP} [2]{\left[#1,#2\right]_{\P}}
\newcommand{\scal}  [2]{(#1,#2)}
\newcommand{\scalP} [2]{(#1,#2)_{\P}}
\newcommand{\scalX} [2]{(#1,#2)_{\Xs}}
\newcommand{\Scal}  [2]{\left(#1,#2\right)}
\newcommand{\ScalP} [2]{\left(#1,#2\right)_{\P}}
\newcommand{\abs}   [1]{|#1|}
\newcommand{\Abs}   [1]{\big|#1\big|}
\newcommand{\ABS}   [1]{\left|#1\right|}
\newcommand{\snorm} [2]{|#1|_{#2}}
\newcommand{\SNORM} [2]{\left|#1\right|_{#2}}
\newcommand{\norm}  [2]{\|#1\|_{#2}}
\newcommand{\Norm}  [2]{\big\|#1\big\|_{#2}}
\newcommand{\NORM}  [2]{\left\|#1\right\|_{#2}}
\newcommand{\TNORM} [2]{\left|\!\left|\!\left|#1\right|\!\right|\!\right|_{#2}}
\newcommand{\Tnorm} [2]{\big|\!\big|\!\big|#1\big|\!\big|\!\big|_{#2}}
\renewcommand{\P} {\textsf{E}}            
\newcommand  {\E} {\textsf{e}}            
\newcommand{\mP}{\ABS{\P}}
\newcommand{\mE}{\ABS{\E}}
\newcommand{\hP}{\hh_{\P}}
\newcommand{\hE}{\hh_{\E}}
\newcommand{\hh}{h}
\newcommand{\Th}{\Omega_{\hh}} 
\newcommand{\NT}{N}
\newcommand{\INTP}{\footnotesize{I}}
\newcommand{\TILDE}[1]{\widetilde{#1}}
\newcommand{\bsh}{\bs_{\hh}}
\newcommand{\zsh}{\zs_{\hh}}
\newcommand{\ush} {\us_{\hh}}
\newcommand{\vsh} {\vs_{\hh}}
\newcommand{\vsI} {\vs_{\INTP}}
\newcommand{\fsh} {\fs_{\hh}}
\newcommand{\wsh} {\ws_{\hh}}
\newcommand{\Ush} {\Us_{\hh}}
\newcommand{\Vsh} {\Vs_{\hh}}
\newcommand{\Vsht}{\TILDE{\Vs}_{\hh}}
\newcommand{\Ssh} {\Ss_{\hh}}
\newcommand{\vvh} {\vv_{\hh}}
\newcommand{\ash} {\as_{\hh}}
\newcommand{\msh} {\ms_{\hh}}
\newcommand{\ashP}{\as_{\hh}^{\P}}
\newcommand{\mshP}{\ms_{\hh}^{\P}}
\newcommand{\Ih}{\Is_{\hh}}
\newcommand{\half}{1\slash{2}}
\newcommand{\calKh}{\calK_{\hh}}
\newcommand{\calMh}{\calM_{\hh}}
\newcommand{\calMhP}{\calM_{\hh}^{\P}}
\newcommand{\uss}[1]{\us^{#1}}
\newcommand{\wss}[1]{\ws^{#1}}
\newcommand{\qss}[1]{\qs^{#1}}
\newcommand{\Ussh}[1]{\Ush^{#1}}
\newcommand{\ussI}[1]{\Ih\us^{#1}}
\newcommand{\ess} [1]{\es^{#1}}
\newcommand{\essh}[1]{\es_{\hh}^{#1}}
\newcommand{\etas}[1]{\eta^{#1}}
\newcommand{\thes}[1]{\theta^{#1}}
\newcommand{\fssh}[1]{\fsh^{#1}}                
\newcommand{\fss}[1]{\fs^{#1}}
\newcommand{\zetav}{{\bm\zeta}}
\newcommand{\NVP}{N^{\P}}
\begin{document}

\begin{frontmatter}

  \title{Virtual Element Approximation of Two-Dimensional\\ Parabolic
    Variational Inequalities}

  \author[IIT,chile]  {D.~Adak}
  \author[IMATI]{G.~Manzini}
  \author[IIT]  {S.~Natarajan}

  \address[IIT]{Department of Mechanical Engineering, Indian Institute
    of Technology Madras, Chennai-600036, India.}
    
    \address[chile]{GIMNAP, Departamento de Matem\'atica, Universidad
del B\'io-B\'io, Concepci\'on, Chile.}

  \address[IMATI]{IMATI, Consiglio Nazionale delle
    Ricerche, via Ferrata 1, 27100 Pavia, Italy }

  \begin{abstract}
    We design a virtual element method for the numerical treatment of
    the two-dimensional parabolic variational inequality problem on
    unstructured polygonal meshes.
    Due to the expected low regularity of the exact solution, the
    virtual element method is based on the lowest-order virtual
    element space that contains the subspace of the linear polynomials
    defined on each element.
    The connection between the nonnegativity of the virtual element
    functions and the nonnegativity of the degrees of freedom, i.e.,
    the values at the mesh vertices, is established by applying the
    Maximum and Minimum Principle Theorem.
    The mass matrix is computed through an approximate $\LS{2}$
    polynomial projection, whose properties are carefully investigated
    in the paper.
    We prove the well-posedness of the resulting scheme in two
    different ways that reveal the contractive nature of the VEM and
    its connection with the minimization of quadratic functionals.
    The convergence analysis requires the existence of a nonnegative
    quasi-interpolation operator, whose construction is also discussed
    in the paper.
    The variational crime introduced by the virtual element setting
    produces five error terms that we control by estimating a suitable
    upper bound.
    Numerical experiments confirm the theoretical convergence rate for
    the refinement in space and time on three different mesh families including 
    distorted squares, nonconvex elements, and Voronoi tesselations.
  \end{abstract}
  
  \begin{keyword}
  Parabolic variational inequalities,
  Virtual element method,
  Maximum and Minimum Principle,
  Nonnegative quasi-interpolant,
  Oblique projection operators
  Time-dependent problems
  \end{keyword}

\end{frontmatter}



\section{Introduction}
\label{sec:Introduction}
Variational inequalities have been an active research field in the
last decades and has found many important applications in finance and
engineering~\cite{Kinderlehrer-Stampacchia:1980,Baiocchi-Capelo:1984,Rodrigues:1987-book}.
For example, they are used in the formulation of the one-phase Stefan
problem~\cite{Rodrigues:1987}.
The Allen—Cahn equation, one of the models of the kinetics of grain
growth in polycrystals, can be treated as a parabolic variational
inequality~\cite{Blowey-Elliott:1993}.
The American put option problem~\cite{Kazufumi-Kunisch:2006} becomes a
one-phase Stefan problem after a suitable change of
variable~\cite{Karatzas-Shreve:1998}.
The electrochemical machine problem is also modeled using variational
inequalities~\cite{Elliott:1980}.
Static contact problems, frictional contact problems, and thermal
expansion problems can be described using variational inequalities,
cf.~\cite{Capatina:2014}.
The numerical approximation of the solution to variational
inequalities has also been a challenging area of research since both
the design of numerical methods and the convergence analysis are not
straightforward~\cite{Glowinski-Lions-Tremolieres:1981}.

The Galerkin approach and, in particular, the finite element method
(FEM) has proven to be quite effective to this purpose.
The linear Galerkin FEM for the time-dependent parabolic variational
inequality (with zero obstacle) was originally proposed
in~\cite{Johnson:1976}.
In this paper, which is the most pertinent to our current work, a
priori error estimates in the $\LS{\infty}$ norm are derived assuming
that the solution is in $\LS{\infty}\big(0,T;\WS{2,p}(\Omega)\big)$
and its first derivative in time is in
$\LS{\infty}\big(0,T;\HSzr{1}(\Omega)\big)\cap\LS{\infty}\big(0,T;\LS{\infty}(\Omega)\big)$
(we explain this notation and provide a formal definition of these
functional spaces later in this section).
A priori estimates in the $\LS{2}$ norm are also derived for the
Galerkin method in~\cite{Fetter:1987} assuming that the solution is in
$\LS{2}(0,T;\LS{2}(\Omega))$ and under certain regularity assumptions
on the angles of each element of the triangulations.
In~\cite{Vuik:1990}, $\LS{2}$ error estimates are derived for a fully
discrete scheme based on the $\theta$-method in time.
Inspired by the American put option problem, \emph{a posteriori} error
estimates are studied
in~\cite{Moon-Nochetto-VonPedersdorrfs-Zhang:2007}.
In \cite{BencheikhLeHocine-Boulaaras-Haiour:2016}, the Authors derive
error estimates for the parabolic variational inequality problem in
the uniform norm.
Moreover, in
\cite{Berger:1977,Gudi:2019,Allegretto-Lin-Yang:2001,Sharma-Pani-Sharma:2018},
mathematical models of the parabolic obstacle problem related to the
American put option problem and the Stefan problem are investigated.

In this work, we consider the approach that was originally proposed
in~\cite{Johnson:1976} for solving a parabolic variational inequality
on triangular meshes and study how to generalize it to polygonal
meshes using the virtual element method
(VEM)~\cite{BeiraodaVeiga-Brezzi-Cangiani-Manzini-Marini-Russo:2013,Ahmad-Alsaedi-Brezzi-Marini-Russo:2013}.
Designing Galerkin schemes for meshes with polygonal elements in 2D
and polyhedral elements in 3D has been a major topic in the numerical
literature of partial differential equations of the last two decades.
Several classes of numerical methods have been designed that are
suitable to meshes with elements having very general geometric shapes.
Other than the VEM, a surely nonexhaustive list includes
the polygonal/polyhedral finite element method
(PFEM)~\cite{Sukumar-Malsch:2006,Sze-Sheng:2005,Bishop:2014},
the mimetic finite difference method (MFD)~\cite{%
  Lipnikov-Manzini-Shashkov:2014,%
  BeiraodaVeiga-Lipnikov-Manzini:2014,%
  BeiraodaVeiga-Lipnikov-Manzini:2011},
the hybridizable discontinuous Galerkin (HDG) method and the hybrid
high-order (HHO) method
\cite{Cockburn-DiPietro-Ern:2016,DiPietro-Ern:2015,DiPietro-Droniou:2019,DiPietro-Droniou-Manzini:2018}.
Pertinent to the topic of our work are also the papers of
References~\cite{Antonietti-BeiraodaVeiga-Verani:2013a,Antonietti-BeiraodaVeiga-Verani:2013b}.

The virtual element method was proposed as a variational reformulation
of the mimetic finite difference method of
References~\cite{Brezzi-Buffa-Lipnikov:2009,BeiraodaVeiga-Lipnikov-Manzini:2011}
for the Poisson equation,
and later extended to the numerical approximation of
general elliptic equations \cite{BeiraodaVeiga-Brezzi-Marini-Russo:2016-M3AS},
elasticity problems~\cite{Mora-Rivera:2020},
eigenvalue problems~\cite{%
  Mora-Rivera-Rodriguez:2015,%
  Mora-Rivera-Rodriguez:2017,%
  Certik-Gardini-Manzini-Vacca:2018,%
  Gardini-Vacca:2018},
Stokes and Navier-Stokes equations
\cite{Antonietti-BeiraodaVeiga-Mora-Verani:2014,Caceres-Gatica:2017,Gatica-Munar-Sequeira:2018,BeiradaVeiga-Lovadina-Vacca:2018,BeiradaVeiga-Vacca:2019},
and the Cahn-Hilliard equations
\cite{Antonietti-BeiraodaVeiga-Scacchi-Verani:2016}.
Furthermore,
the mixed formulation~\cite{Brezzi-Falk-Marini:2014}
the nonconforming formulation
\cite{AyusodeDios-Lipnikov-Manzini:2016,Cangiani-Gyrya-Manzini:2016,Cangiani-Manzini-Sutton:2016},
and the enriched
formulation~\cite{Benvenuti-Chiozzi-Manzini-Sukumar:2019}
have been proposed and
a posteriori error estimations
\cite{%
  BeiraodaVeiga-Manzini:2015,%
  Cangiani-Georgoulis-Pryer-Sutton:2018,%
  BeiraodaVeiga-Manzini-Mascotto:2018,%
  Deng-Wang-Wei:2020}
have been derived for mesh adaptivity.
VEM for anisotropic polygonal discretizations are also found
in~\cite{Antonietti-Berrone-Verani-Weisser:2019}.

The VEM satisfies a Galerkin-type orthogonalization property on
polynomial subspaces and can be seen as a generalization of the FEM on arbitrary
polytopal meshes.
The finite dimensional approximation spaces consist of polynomial and
nonpolynomial functions that satisfy a partial differential equation
locally defined on the mesh elements.
The nonpolynomial functions are not known inside the elements, but the
degrees of freedom of the virtual element functions are carefully
chosen so that some polynomial projection operators are computable.
These projection operators make it possible to design computable
bilinear forms for the discrete variational formulation.
Since an explicit knowledge of the virtual element functions is not
required in the practical implementation, such ``virtual'' setting
works for very general shaped polytopal elements.
For example, nonconvex elements and elements with hanging nodes are
admissible and the latter do not require any special treatment.


Due to the expected low regularity of the solution, our method is
based on the lowest-order approximation space proposed
in~\cite{BeiraodaVeiga-Brezzi-Cangiani-Manzini-Marini-Russo:2013,Ahmad-Alsaedi-Brezzi-Marini-Russo:2013}.
The degrees of freedom are the vertex values and our VEM coincides
with the FEM of Reference~\cite{Johnson:1976} on all triangular
meshes.
The generalization to the virtual element framework is nontrivial and
the design of an effective VEM and its analysis is challenging for
several reasons that we illustrate below.
First, the variational formulation is given on the subset of the
nonnegative virtual element functions, which we identify with those
functions of the virtual element space  whose degrees of
freedom, i.e., the vertex values, are nonnegative.
The property that a function with nonnegative vertex values is
nonnegative is obvious for a linear polynomial interpolating such
values on a triangular element.
However, to prove that such property holds for a virtual element
function on a polygonal element is a nontrivial task.
In fact, such functions are not generally known in closed form, but
only as the solutions of an elliptic partial differential equation
that is locally set on the polygonal element.
We address this issue by noting that the lowest-order virtual element
space consists of functions that are harmonic inside each element and
have a continuous piecewise linear trace on the elemental boundary
given by the interpolation of the vertex values.
Consequently, we can prove the nonnegativity property by invoking the
Maximum and Minimum Principle Theorem~\cite{Gilbarg-Trudinger:2001}.
According to this theorem, a nonconstant harmonic function on a
compact set of points, e.g., a (closed) polygonal element, must take
its maximum and minimum value on the boundary.
If all its vertex values are nonnegative, so is their piecewise linear
interpolation on the elemental boundary and the function itself inside
the element.
Unfortunately, we cannot apply this theorem to the modified
(``\emph{enhanced}'') virtual element space introduced
in~\cite{Ahmad-Alsaedi-Brezzi-Marini-Russo:2013} as its functions are
no longer harmonic.
This fact poses a major issue to the design of our VEM since we need
an $\LS{2}$-like orthogonal projector for the calculation of the mass
matrix in the discretization of the time derivative term.
To address this issue, we design a different projector, which is still
computable from the degrees of freedom of the space and is orthogonal
with respect to an approximate $\LS{2}$ inner product.
We carefully characterize the approximation properties of this
operator to prove the convergence of the VEM, estimate the
approximation error and derive the convergence rate for the
\RED{refinement} in time and space.

We also prove the well-posedness of the numerical method, i.e.,
existence and uniqueness of the virtual element solution, in two
different ways.
The first proof reveals the contractive nature of the scheme, which
motivates an iterative implementation at every time step from a
practical viewpoint.
The second proof generalizes a minimization argument briefly mentioned
in~\cite{Johnson:1976} to the new virtual element framework proposed
in this work and establishes a clear connection between the VEM and
the minimization of quadratic functionals.

To carry out the theoretical analysis and prove the convergence of the
VEM, we investigate how the virtual element reformulation impacts on
the original convergence proof of Reference~\cite{Johnson:1976}.
A major ingredient of the latter is the existence of a nonnegative
quasi-interpolation operator for functions that are only
$\HS{1}$-regular as, for example, the derivative in time of the
parabolic inequality solution.
To address this point, we generalize the construction of such operator
in~\cite{Johnson:1976}, so that it can work on polygonal elements with
the desired nonnegativity property.
Finally, we identify the new terms that arise from the
``\emph{variational crime}'' introduced by the virtual element method
and provide an upper bound for all of them.

The numerical experiments confirm the validity of our approach by
solving a manufactured solution problem on very general meshes
including distorted square elements, nonconvex elements and Voronoi
tesselations.
The experimental convergence rates reflects the convergence rates
expected from the theoretical analysis.

The outline of the paper is as follows.
In the rest of this section, we introduce some background material
from functional analysis and the notation used in the paper.
In Section~\ref{sec:model:problem}, we discuss the continuous weak
formulation of the mathematical model.
In Section~\ref{sec:VEM}, we present our virtual element method for
the parabolic inequality problem.
In Section~\ref{sec:technical:lemmas}, we introduce some technical
lemmas and detail the construction of the quasi-interpolation
nonnegative operator for the convergence analysis.
In Section~\ref{sec:convergence:analysis}, we prove the convergence of
the method and derive the a priori error estimate.
In Section~\ref{sec:numerical:experiments}, we assess the performance
of the method on three different families of polygonal meshes.
In Section \ref{sec:conclusions}, we summarize our results and offer
the final remarks.

\subsection{Notation}

In the rest of this section, we introduce some background material
from functional analysis as a few basic definitions of functional
spaces, inner products, norms and seminorms.
The notation adopted in this paper is consistent with
Reference~\cite{Adams-Fournier:2003} for the Sobolev and Hilbert
spaces and Reference~\cite{Evans:1998} for the Bochner spaces.

\subsubsection{Functional spaces}
Let $\omega$ be an open, bounded, connected subset of $\REAL^{2}$.
We consider a real number $\ps$ such that $1\leq\ps<\infty$ and an
integer number $k\geq1$.
We denote the Sobolev space of the real-valued, $\ps$-integrable
functions defined on $\omega$ by $\LS{\ps}(\omega)$, and the Sobolev
space of the real-valued, essentially bounded functions defined on
$\omega$ by $\LS{\infty}(\omega)$.
We denote the subspace of functions of $\LS{\ps}(\omega)$ whose weak
derivatives of order up to $k$ are also in $\LS{\ps}(\omega)$ by
$\WS{k,p}(\omega)$.
For $\ps=2$, we prefer the notation $\HS{k}(\omega)$.
We recall that $\LS{2}(\omega)$ and $\HS{k}(\omega)$ are Hilbert
spaces when endowed with the inner products
\begin{align}
  \scal{\phi}{\psi}_{\omega}  &:=\int_{\omega}\phi(\xv)\psi(\xv)d\xv\qquad\forall\phi,\psi\in\LS{2}(\omega),\\[0.25em]
  \scal{\phi}{\psi}_{k,\omega}&:=\sum_{\ABS{\alpha}\leq\ks}\int_{\omega}\Ds^{\alpha}\phi(\xv)\Ds^{\alpha}\psi(\xv)d\xv\qquad\forall\phi,\psi\in\HS{k}(\omega),\quad\ks\geq1,
\end{align}
and the induced norms
$\NORM{\psi}{0,\omega}=\scal{\psi}{\psi}_{\omega}^{\half}$ and
$\NORM{\psi}{k,\omega}=\scal{\psi}{\psi}_{k,\omega}^{\half}$.
All integrals must be intended in the sense of the Lebesgue
integration theory and we may use the abbreviation ``\textit{a.e.}''
for ``\textit{almost everywhere}'' 
whenever a \emph{pointwise} property holds except for a subset of
points with zero Lebesgue measure.
In the formulation of the method, $\omega$ can be a mesh element (see
the next subsection) or the whole computational domain $\Omega$.
In the last case, we omit the subscript $\Omega$ and use
$\scal{\phi}{\psi}$,
$\scal{\phi}{\psi}_{k}$,
$\NORM{\psi}{k}$ and
$\SNORM{\psi}{k}$ instead of
$\scal{\phi}{\psi}_{\Omega}$,
$\scal{\phi}{\psi}_{k,\Omega}$,
$\NORM{\psi}{k,\Omega}$ and
$\SNORM{\psi}{k,\Omega}$.

Let $\Ts>0$ be a real number and $(\Xs,\norm{\cdot}{\Xs})$ a normed
space, where $\Xs$ can be $\LS{2}(\Omega)$ or $\HS{k}(\Omega)$,
$k\geq1$.
The Bochner space $\LS{p}(0,\Ts;\Xs)$ is the space of functions $\vs$
such that the sublinear functional
\begin{align*}
  \NORM{\vs}{\LS{p}(0,\Ts;\Xs)}
  = \begin{cases}
    \displaystyle\left(\int_{0}^{T}\NORM{\vs(t)}{\Xs}^{\ps}\,dt\right)^{1\slash{p}} & 1\leq\ps<\infty,\\
    \esssup_{\ts\in[0,T]}\NORM{\vs(\ts)}{\Xs},                                    & p=\infty,
  \end{cases}
\end{align*}
is a \emph{finite} norm for almost every $t\in[0,\Ts]$.
According with this notation, we also denote the space of the
continuous functions from $[0,T]$ to $\Xs$ by $\Cs{}(0,T;\Xs)$.

\medskip
Throughout the paper, we use the letter ``$\Cs$'' to denote a strictly
positive constant that can take a different value at any occurrence.
The constant $\Cs$ is independent of the mesh size parameter $\hh$ and
the time step $\Delta\ts$ that will be introduced in the next
sections.
However, $\Cs$ may depend on the other parameters of the differential
problem and its virtual element discretization such as the domain
shape, the mesh regularity constant and the coercivity and continuity
constants of the bilinear forms used in the variational formulation.

\subsubsection{Mesh notation and regularity assumptions}

For the exposition sake, we assume that the computational domain
$\Omega$ is an open, bounded, \emph{polygonal} subset of $\REAL^2$
with Lipschitz boundary $\Gamma$.
Let $\calT=\{\Th\}_{\hh}$ be a family of mesh decompositions $\Th$ of
$\Omega$ uniquely identified by the value of the mesh size parameter
$\hh\in\calH$.
Here, $\calH$ is a suitable subset of the real line $\REAL$ having
zero as its unique accumulation point.
Every mesh $\Th$ is a collection of nonoverlapping, open, polygonal
elements denoted by $\P$ and forming a finite covering of $\Omega$,
i.e., $\overline{\Omega}=\bigcup_{\P\in\Th}\overline{\P}$.
The polygonal elements are nonoverlapping in the sense that the
intersection of the closures in $\REAL^2$ of any pair of them
$\P,\P^{\prime}\in\Th$ has area equal to zero, i.e.,
$\Abs{\overline{\P}\cap\overline{\P}^{\prime}}=0$.
Accordingly, the intersection of their boundaries
$\partial\P\cap\partial\P^{\prime}$ is either the empty set, or the
subset of common vertices, or the subset of shared edges (including
the edge vertices).
Every polygon $\P$ has a nonintersecting boundary denoted by
$\partial\P$ and formed by straight edges $\E$, area $\mP$, center of
gravity $\xvP=(\xsP,\ysP)^T$ and diameter
$\hP=\max_{\xv,\yv\in\P}\abs{\xv-\yv}$.
As usual, the maximum of the diameters of the elements in a mesh $\Th$
provides the value of the mesh size $\hh$, e.g.,
$\hh=\max_{\P\in\Th}\hP$.
Consistently with this notation, $\hE$ is the length $\mE$ of edge
$\E$ and $\xvE=(\xsE,\ysE)^T$ is the position vector of the midpoint
of edge $\E$.

\medskip
In the formulation of the VEM, we require that all the meshes $\Th$
satisfy the following \emph{mesh regularity assumption}.
\begin{description}
\item[]\ASSUM{M}{} There exists a real, \RED{strictly positive constant $\rho >0$},
  which is independent of $\hh$, such that:

  \vspace{-0.5em}
  \begin{description}
  \item\ASSUM{M}{1} every element $\P\in\Th$ is star-shaped with
    respect to a ball of radius greater than $\rho\hP$;
  \item\ASSUM{M}{2} for every element $\P\in\Th$, the length $\hE$ of
    every edge $\E\subset\partial\P$ satisfies $\hE\geq\rho\hP$.
  \end{description}
\end{description}
An admissible mesh that satisfies assumptions
\ASSUM{M}{1}-\ASSUM{M}{2} may have elements with a very general
geometric shape.
However, the star-shapedness property \ASSUM{M}{1} implies that the
polygonal elements are \emph{simply connected} subsets of $\REAL^{2}$,
and the scaling assumption \ASSUM{M}{2} implies that the elements
cannot become too skewed and the number of edges in each elemental
boundary is uniformly bounded over the whole mesh family
$\{\Th\}_{\hh}$.

\subsubsection{Polynomial spaces}

We denote the linear space of polynomials of degree $\ell=0,1$ defined
on the element $\P$ or the edge $\E$ by $\PS{\ell}(\P)$ and
$\PS{\ell}(\E)$, respectively, and we conveniently set
$\PS{-1}(\P)=\{0\}$.
Space $\PS{1}(\P)$ is the span of the \emph{scaled monomials} defined
as:
\begin{align}
  \ms_1(\xs,\ys) = 1,\qquad
  \ms_2(\xs,\ys) = \frac{\xs-\xsP}{\hP},\qquad
  \ms_3(\xs,\ys) = \frac{\ys-\ysP}{\hP}\qquad
  \forall(\xs,\ys)\in\P.
  \label{eq:scaled:monomials}
\end{align}
Similarly, $\PS{1}(\E)$ is the span of the monomials
$\mu_1(s)=1,\mu_2(s)=(s-s_{\E})\slash{\hE}$, where $s\in\E$ is a local
coordinate on edge $\E$, and $s_{\E}$ is the position of the edge
midpoint $\xvE$ in such a local cordinate system.
We let $\PS{1}(\Th)$ denote the linear space of the piecewise
discontinuous polynomials that are globally defined on $\Omega$ and
such that $\restrict{\qs}{\P}\in\PS{1}(\P)$ for all elements
$\P\in\Th$.

\medskip
In the VEM formulation, we make use of the elliptic projection
operator $\PinP{}:\HS{1}(\P)\rightarrow\PS{1}(\P)$, which is defined
on every mesh element $\P$ so that, for all $\vs\in\HS{1}(\P)$, the
linear polynomial $\PinP{}\vs$ is the solution to the variational
problem
\begin{align}
  \big(\nabla(\PinP{}\vs-\vs),\nabla\qs\big)_{\P} = 0 \quad\forall\qs\in\PS{1}(\P)
  \quad\textrm{and}\quad
  \Ps^{0,\P}\big(\PinP{}\vs-\vs) = 0.
  \label{eq:elliptic:projector}
\end{align}
In~\eqref{eq:elliptic:projector}, $\Ps^{0,\P}\vs$ is the projection of
$\vs$ onto the constant polynomials given by
\begin{align}
  \Ps^{0,\P}\vs:=\frac{1}{\ABS{\partial\P}}\int_{\partial\P}\vs(\xv)d\xv.
\end{align}
Accordingly, we define the global elliptic projection operator
$\Pin{}:\HS{1}(\Omega)\to\PS{1}(\Th)$ as the operator satisfying
$\restrict{\big(\Pin{}\vs\big)}{\P}=\PinP{}\big(\restrict{\vs}{\P}\big)$ for every
mesh element $\P$.

\medskip
For the sake of reference, we also define the orthogonal projection
operator $\PizP{}:\LS{2}(\P)\rightarrow\PS{1}(\P)$ with respect to the
inner product in $\LS{2}(\P)$, although we will not use it in the
formulation of the method.
The orthogonal projection $\PizP{}\vs$ of a function
$\vs\in\LS{2}(\P)$ is the linear polynomial solving the variational
problem:
\begin{align}
  \big(\PizP{}\vs-\vs,\qs\big)_{\P} = 0 \quad\forall\qs\in\PS{1}(\P).
  \label{eq:orthogonal:projector}
\end{align}
Accordingly, we define the global orthogonal projection operator
$\Piz{}:\LS{2}(\Omega)\to\PS{1}(\Th)$ as the operator satisfying
$\restrict{\big(\Piz{}\vs\big)}{\P}=\PizP{}\big(\restrict{\vs}{\P}\big)$ for
every mesh element $\P$.


\section{Parabolic Variational Inequality}
\label{sec:model:problem}

We let
$\calK=\big\{\,\vs\in\HSzr{1}(\Omega):\,\vs\geq0~\textrm{a.e.~in~}\Omega\,\big\}$
be the subset of the nonnegative functions in $\HSzr{1}(\Omega)$.
We also consider the positive real number $\Ts$ representing the final
integration time and the time interval $\Js=[0,\Ts]$, and introduce
the bilinear form
\begin{align}
  \as(\vs,\ws)=\int_{\Omega}\nabla\vs(\xv)\cdot\nabla\ws(\xv)d\xv
  \qquad\forall\vs,\ws\in\HS{1}(\Omega).
  \label{eq:bilinear:forms}
\end{align}
This bilinear form is coercive and continuous on $\HSzr{1}(\Omega)$.
So, there exists two real, positive constants $\alpha$ and $\Ms$ such
that that $\alpha\NORM{\vs}{1}^2\leq\as(\vs,\vs)$ and
$\as(\vs,\ws)\leq\Ms\NORM{\vs}{1}\NORM{\ws}{1}$ for all $\vs$, $\ws$
in $\HSzr{1}(\Omega)$.
We search the solution $\us(\ts)$ to the parabolic variational
inequality problem for a given right-hand side source term $\fs$ and
initial state $\us_0$, which reads as

\medskip
\textit{Find $\us(\ts):\Js\to\calK$ such that, for almost every
  $\ts\in\Js$ it holds that}
\begin{subequations}
  \begin{align}
    &\Scal{\frac{\partial\us}{\partial\ts}}{\vs-\us}+\as(\us,\vs-\us)
    \geq \scal{\fs}{\vs-\us}\quad\forall\vs\in\calK,
    \label{eq:parb:ineq:A}\\[0.5em]
    &~\us(0)=\us_0.
    \label{eq:parb:ineq:B}
  \end{align}
\end{subequations}
The solution $\us$ exists and is unique~\cite{Brezis:1972} under the
assumptions
\begin{description}
\item[]\ASSUM{A}{1} $\fs\in\Cs{}\big(J;\LS{\infty}(\Omega)\big)$;
\item[]\ASSUM{A}{2} $\partial\fs\slash{\partial\ts}\in\LS{2}\big(J;\LS{\infty}(\Omega)\big)$;
\item[]\ASSUM{A}{3} $\us_0\in\WS{2,\infty}(\Omega)\cap\calK$.
\end{description}
In particular, if assumptions \ASSUM{A}{1}-\ASSUM{A}{3} are true,
solution $\us$ is such that:
\begin{subequations}
  \begin{align}
    &\us\in\LS{\infty}\big(0,T;\WS{2,p}(\Omega)\big)\qquad\textrm{for~}1\leq\ps<\infty,
    \label{eq:regularity:A}
    \\[0.5em]
    &\frac{\partial\us}{\partial\ts}
    \in\LS{\infty}\big(0,T;\HSzr{1}(\Omega)\big)\cap\LS{\infty}\big(0,T;\LS{\infty}(\Omega)\big),
    \label{eq:regularity:B}
    \\[0.5em]
    &\Scal{\frac{\partial^{+}\us}{\partial\ts}}{\vs-\us}+\as(\us,\vs-\us)\geq\scal{\fs}{\vs-\us}
    \quad\forall\vs\in\calK,\,\ts\in\Js,
    \label{eq:regularity:C}
  \end{align}
\end{subequations}
where $\partial^{+}\us\slash{\partial\ts}$ denotes the right-hand
derivative of $\us$ with respect to $\ts$.
Moreover, $\us$ satisfies the partial differential equations
\begin{subequations}
  \begin{align}
    &\frac{\partial^{+}\us}{\partial\ts}=\Delta\us+\fs    \quad\text{a.e.~on~}\Omega^{+}(\ts),
    \label{eq:aux:eqn:A}
    \\[0.5em]
    &\frac{\partial^{+}\us}{\partial\ts}=\text{max}(\fs,0)\quad\text{a.e.~on~}\Omega^{0}(\ts),
    \label{eq:add:eqn:B}
  \end{align}
\end{subequations}
where, for almost every $\ts\in\Js$,
$\Omega^{+}(\ts)=\big\{\xv\in\Omega:\us(\xv,\ts)>0\big\}$ and
$\Omega^{0}(\ts)=\big\{\xv\in\Omega:\us(\xv,\ts)=0\big\}$.

\medskip
Finally, we partition the time interval $[0,\Ts]$ into $\Ns$ equally
spaced subintervals $\Js_n=\big[\ts^{n},\ts^{n+1}\big]$ having size
$\Delta\ts=\ts^{n+1}-\ts^{n}=\Ts\slash{\Ns}$, and let $\ms(\Gamma_n)$
denote the area of the set
\begin{align}
  \Gamma_n =
  \underset{\ts\in\Js_n}{\cup}\Omega^{+}(\ts)\cup\Omega^{+}(\ts^{n+1})
  \setminus\overline{\Omega^{+}(\ts)\cap\Omega^{+}(\ts^{n+1})}.
  \label{eq:Gamma-n:def}
\end{align}
Our last assumption is that
\begin{description}
\item[]\ASSUM{A}{4} $~\sum_{n=0}^{N-1}\ms(\Gamma_n)\leq\Cs$ for some
  real, positive constant $\Cs$ independent of $\hh$ and $\Delta\ts$.
\end{description}
This assumption together with \ASSUM{A}{1}-\ASSUM{A}{3} will be used
in the convergence analysis of the method that we perform in
Section~\ref{sec:convergence:analysis}.


\section{Virtual element approximation}
\label{sec:VEM}

Let $\Vsh$ be a conforming finite dimensional subspace of
\RED{$\HSzr{1}(\Omega)$} that will be referred to as the \emph{virtual
element space}.
Let $\msh(\cdot,\cdot),\,\ash(\cdot,\cdot):\Vsh\times\Vsh\to\REAL$ be
the virtual element approximation of the $\LS{2}$ inner product
$\scal{\cdot}{\cdot}$ and the bilinear form $\as(\cdot,\cdot)$.
Let $\fsh$ be the element of $(\Vsh)^{\prime}$, the dual space of
$\Vsh$, such that $\scal{\fsh}{\cdot}:\Vsh\to\REAL$ is \RED{a} virtual
element approximation of the linear functional $\scal{\fs}{\cdot}$ (we
use the same symbol $\fsh$ to denote the Ritz representative of $\fsh$
in $\Vsh$).
Then, we introduce the finite-dimensional subset
$\calKh=\Vsh\cap\calK=\big\{\vsh\in\Vsh:\vsh\geq0\textrm{~in~}\Omega\big\}$
of the virtual element functions that are nonnegative in $\Omega$.
We denote the evaluation of a time-dependent quantity $\ws(t)$ at
$\ts^{n}$ by $\wss{n}=\ws(\ts^{n})$, and define the discrete
difference operator
$\partial\wss{n}=\big(\wss{n+1}-\wss{n}\big)\slash{\Delta\ts}$, which
provides the time variation of $\{\ws(\ts^{n})\}_{n}$ in the time
interval $\big[\ts^{n},\ts^{n+1}\big]$.

\medskip
The virtual element approximation $\Ush^{n}$ to $\us(\ts^{n})$ is the
solution of the following discrete problem:
\textit{Find $\{\Ush^{n}\}_{n=0,\ldots,\NT}$ with $\Ush^{n}\in\calKh$
  for every $n=0,1,\ldots,\NT$ such that}
\begin{align}
  \msh\big(\partial\Ush^n,\vsh-\Ush^{n+1}\big) + \ash\big(\Ush^{n+1},\vsh-\Ush^{n+1}\big)
  \geq\Scal{\fsh^{n+1}}{\vsh-\Ush^{n+1}},
  \label{eq:VEM:A}
\end{align}
\textit{for every $\vsh\in\calKh$ with the initial solution field
  $\Ush^0$ satisfying}
\begin{align}
  & \Norm{\Ush^0-\us_0}{0}\leq\Cs\hh.
  \label{eq:VEM:B}
\end{align}

This section is devoted to the definition of $\Vsh$, the construction
of the bilinear forms $\msh(\cdot,\cdot)$ and $\ash(\cdot,\cdot)$ and
the linear functional $(\fsh,\cdot)$, and the characterization of
their approximation properties.
\RED{
Furthermore, a possible choice of the initial approximation of
$\us_0$, i.e., $\Ush^0$, which satisfies~\eqref{eq:VEM:B}, is provided
by could be chosen as $\Ush^0=\Ih\us_0$, where $\Ih$ is the
interpolation operator that will be defined in
Section~\ref{subsec:nonnegative:quasi-interpolation:operator}.
}

\subsection{Virtual element spaces}
\label{subsec:VEM:spaces}
Following~\cite{BeiraodaVeiga-Brezzi-Cangiani-Manzini-Marini-Russo:2013},
we define the virtual element space $\Vsh(\P)$ on every element
$\P\in\Th$ as
\begin{align}
  \Vsh(\P)=\Big\{\,
  \vsh\in\HS{1}(\P)\cap\CS{}(\overline{\P}):
  \restrict{\vsh}{\partial\P}\in\CS{}(\partial\P),\;
  \restrict{\vsh}{\E}\in\PS{1}(\E)\;\forall\E\in\partial\P,\;
  \Delta\vsh=0\textrm{~in~}\P
  \,\Big\}.
  \label{eq:VEM:space:local}
\end{align}
The global virtual element space $\Vsh$ is given by gluing together in
a conforming way the elemental spaces $\Vsh(\P)$:
\begin{align}
  \Vsh:=\Big\{
  \vsh\in\HSzr{1}(\Omega):
  \restrict{\vsh}{\P}\in\Vsh(\P)\,\,\forall\P\in\Th
  \Big\}.
  \label{eq:VEM:space:global}
\end{align}
On every element $\P\in\Th$, we consider the subset of the nonnegative
virtual element functions:
\begin{align}
  \calKh(\P)
  =\Big\{\,\vsh\in\Vsh(\P):\vsh\geq0\,\textrm{~in~}\P\Big\}
  \subset\HSzr{1}(\P).
\end{align}
It is immediate to see that $\restrict{\vsh}{\P}\in\calKh(\P)$ for all
$\P\in\Th$ if and only if $\vsh\in\calKh$, since
$\calKh=\Vsh\cap\calK$ is the subset of the nonnegative virtual
element functions globally defined on $\Omega$.

\medskip
A virtual element function $\vsh$ is uniquely characterized in every
element $\P$ by its values at the elemental vertices, so that we can
take such values as the degrees of freedom of the method.
A proof of this unisolvence property is found
in~\cite{BeiraodaVeiga-Brezzi-Cangiani-Manzini-Marini-Russo:2013}.
The degrees of freedom of the functions in the global space $\Vsh$ and
its subset $\calKh$ are given by collecting the values at all the mesh
vertices.
Their unisolvence in $\Vsh$ follows from their unisolvence in each
elemental space.
Moreover, a function $\vsh\in\calKh(\P)$ also belongs to $\Vsh(\P)$,
so it is uniquely defined by its vertex values, but these values must
be nonnegative to reflect the property that $\vsh(\xv)\geq0$ for every
$\xv\in\overline{\P}$.
This property, which is crucial in the construction of our VEM, is
stated in the following lemma.
\begin{lemma}[Nonnegative characterization of $\calKh(\P)$]
  \label{lemma:VEM:nonnegative-subset}
  Let $\P$ denote an element of mesh $\Th$ satisfying the mesh
  assumptions \ASSUM{M}{1}-\ASSUM{M}{2}.
  Then, a virtual element function $\vsh\in\Vsh(\P)$ belongs to
  $\calKh(\P)$ if and only if its values at the vertices of $\P$ are
  nonnegative.
\end{lemma}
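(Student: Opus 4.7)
The forward implication is immediate: if $\vsh\geq 0$ on $\P$, then in particular $\vsh\geq 0$ at every vertex of $\P$, since vertices belong to $\overline{\P}$ and $\vsh$ is continuous up to the boundary by the definition of $\Vsh(\P)$.

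For the converse, assume that the vertex values of $\vsh$ are nonnegative, and plan to deduce that $\vsh\geq 0$ on all of $\P$ via a two-step argument. The first step is to establish nonnegativity on the boundary $\partial\P$. By the definition of $\Vsh(\P)$, the restriction $\restrict{\vsh}{\E}$ on each edge $\E\subset\partial\P$ is a polynomial of degree one, hence coincides with the linear interpolant of its two endpoint values. Since both endpoint values are vertex values of $\vsh$ and thus nonnegative by assumption, the linear interpolant is a convex combination of nonnegative numbers and is therefore nonnegative on $\E$. Repeating this argument for every edge and invoking continuity of $\vsh$ along $\partial\P$, we conclude that $\vsh(\xv)\geq 0$ for every $\xv\in\partial\P$.

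The second step lifts the nonnegativity from $\partial\P$ to $\overline{\P}$. Because $\vsh$ satisfies $\Delta\vsh=0$ inside $\P$ and is continuous on $\overline{\P}$, the Maximum and Minimum Principle for harmonic functions (see \cite{Gilbarg-Trudinger:2001}) applies, provided that $\P$ is a connected open set, which is guaranteed by the star-shapedness assumption \ASSUM{M}{1}. Consequently,
\begin{align*}
  \min_{\xv\in\overline{\P}}\vsh(\xv) = \min_{\xv\in\partial\P}\vsh(\xv) \geq 0,
\end{align*}
which proves $\vsh\in\calKh(\P)$.

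The argument is essentially a direct application of elementary linear interpolation on edges combined with the harmonic minimum principle; no delicate estimate is required. The only subtlety to be careful about will be to verify the hypotheses of the minimum principle, in particular the connectedness and regularity of $\P$, which are ensured by the mesh regularity assumptions \ASSUM{M}{1}--\ASSUM{M}{2}. I do not expect a significant obstacle in this proof; the main conceptual point (and the reason this result is highlighted rather than taken for granted) is that such an equivalence would \emph{not} hold for the enhanced virtual element space of~\cite{Ahmad-Alsaedi-Brezzi-Marini-Russo:2013}, whose functions are not harmonic inside $\P$, and this is precisely why the original (harmonic) space~\eqref{eq:VEM:space:local} is adopted here.
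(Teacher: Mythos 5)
Your proof is correct and follows essentially the same route as the paper's: nonnegativity on each edge from the linear interpolation of nonnegative vertex values, followed by the Maximum and Minimum Principle for the harmonic function on $\overline{\P}$. The extra care you take in verifying the hypotheses of the minimum principle is a welcome but not substantively different addition.
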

\begin{proof}
  The evaluation of a nonnegative function $\vsh\in\Vsh(\P)$ at the
  vertices of $\P$ is obviously nonnegative.
  In turn, the edge trace $\restrict{\vsh}{\E}$ for each edge $\E$ is
  nonnegative if the values of $\vsh$ at the vertices of
  $\E\subset\partial\P$ are nonnegative since the trace is given by the
  linear interpolation of such vertex values.
  Then, the lemma is a consequence of the Maximum and Minimum
  Principle Theorem, see~\cite{Gilbarg-Trudinger:2001}, which implies
  that all nonconstant harmonic functions defined on the nonempty
  compact subset $\overline{\P}$ of $\REAL^2$ attains their maximum
  and minimum values on the boundary of $\P$.
  \ENDPROOF
\end{proof}
This result is readily extended to the whole set $\calKh$ in the next
corollary.
\begin{corollary}
  \label{lemma:VEM:nonnegative-subset:global}
  Under mesh assumptions \ASSUM{M}{1}-\ASSUM{M}{2}, a virtual element
  function $\vsh\in\Vsh$ belongs to $\calKh$ if and only if its values
  at the mesh vertices are nonnegative.
\end{corollary}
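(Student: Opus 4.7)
The plan is to reduce the global statement to the local one already proved in Lemma~\ref{lemma:VEM:nonnegative-subset} by exploiting the fact that the global virtual element space $\Vsh$ is defined element-wise, together with the fact that the mesh $\Th$ provides a finite covering of $\overline{\Omega}$ with nonoverlapping closed polygons whose boundaries only intersect along edges and vertices.

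For the forward implication, I would simply observe that if $\vsh\in\calKh=\Vsh\cap\calK$, then $\vsh\geq0$ almost everywhere on $\Omega$; since every $\vsh\in\Vsh$ is continuous on $\overline{\Omega}$ (it is continuous on each $\overline{\P}$ by the definition of $\Vsh(\P)$ in~\eqref{eq:VEM:space:local}, and the conforming gluing in~\eqref{eq:VEM:space:global} forces continuity across edges), the pointwise evaluation at every mesh vertex makes sense, and the nonnegativity extends from almost everywhere to all points by continuity, in particular to the mesh vertices.

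For the reverse implication, I would assume that all vertex values of $\vsh\in\Vsh$ are nonnegative. Given any element $\P\in\Th$, the restriction $\restrict{\vsh}{\P}$ lies in $\Vsh(\P)$ by definition of the global space, and its vertex values (a subset of the global vertex values) are nonnegative. Lemma~\ref{lemma:VEM:nonnegative-subset} then yields $\restrict{\vsh}{\P}\in\calKh(\P)$, i.e., $\vsh\geq0$ on $\overline{\P}$. Since $\overline{\Omega}=\bigcup_{\P\in\Th}\overline{\P}$, this gives $\vsh\geq0$ pointwise on $\overline{\Omega}$, hence a.e.\ on $\Omega$, and together with $\vsh\in\HSzr{1}(\Omega)$ we conclude $\vsh\in\calK$ and thus $\vsh\in\calKh$.

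There is no substantive obstacle here: the corollary is a direct consequence of Lemma~\ref{lemma:VEM:nonnegative-subset} applied element-by-element, combined with the conforming structure of $\Vsh$ and the continuity of its functions. The only mildly delicate point worth flagging is the passage from ``a.e.\ nonnegative'' (the definition of $\calK$) to ``nonnegative at every vertex'' in the forward direction, which is justified by the continuity of virtual element functions on each $\overline{\P}$ and hence at each mesh vertex.
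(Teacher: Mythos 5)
Your proof is correct and follows essentially the same route as the paper, which simply notes that the corollary follows from Lemma~\ref{lemma:VEM:nonnegative-subset} applied element by element together with the definition of the degrees of freedom of functions in $\calKh$. Your additional remark on passing from ``a.e.\ nonnegative'' to pointwise nonnegativity at the vertices via continuity is a sound (if implicit in the paper) justification of the forward implication.
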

\begin{proof}
  The assertion of the lemma trivially follows from the previous lemma
  and the definition of the degrees of freedom of a virtual element
  function in the subset $\calKh$.
  \ENDPROOF
\end{proof}
  
\medskip
The polynomial space $\PS{1}(\P)$ is a linear subspace of $\Vsh(\P)$
and the subset of the nonnegative linear polynomials must belong to
$\calKh(\P)$.
Moreover, Lemma~\ref{lemma:VEM:nonnegative-subset} implies that a
linear polynomial whose vertex values are nonnegative must be
nonnegative.

A major property of the elemental space $\Vsh(\P)$ is that the
elliptic projection $\PinP{}\vsh$ of the virtual element function
$\vsh$ defined in~\eqref{eq:elliptic:projector} is computable from the
degrees of freedom of $\vsh$.
In the spirit of the VEM, we will use this projection operator to
define the discrete bilinear form $\ash(\cdot,\cdot)$, see the next
subsection.
Instead, the orthogonal projection $\PizP{}\vsh$ is noncomputable from
the degrees of freedom of the virtual element function $\vsh$.
Following~\cite{Ahmad-Alsaedi-Brezzi-Marini-Russo:2013}, we could
consider the ``enhanced'' virtual element space:
\begin{multline}
  \Vsht(\P)=\Big\{\,
  \vsh\in\HS{1}(\P):
  \restrict{\vsh}{\partial\P}\in\CS{0}(\partial\P),\;
    \restrict{\vsh}{\E}\in\PS{1}(\E)\;\forall\E\in\partial\P,\;\\
    \Delta\vsh\in\PS{1}(\P),\;
    \scalP{\vsh-\PinP{}\vsh}{\qs}=0\;\forall\qs\in\PS{1}(\P)
    \,\Big\}.
    \label{eq:VEM:space:local:enhancement}
\end{multline}
In such a space, the orthogonal projection $\PizP{}\vsh$ coincides
with the elliptic projection $\PinP{}\vsh$.
However, a fundamental property of our construction is that a virtual
element function with all positive (nonnegative) values at the
vertices of $\P$ must be positive (nonnegative) in $\P$.
We can readily prove this property for the harmonic functions of space
\eqref{eq:VEM:space:local} by resorting to the Maximum and Minimum
Principle Theorem~\cite{Gilbarg-Trudinger:2001} as in the proof of
Lemma~\ref{lemma:VEM:nonnegative-subset}, but not for the nonharmonic
functions of space $\Vsht(\P)$
in~\eqref{eq:VEM:space:local:enhancement}.
So, to define the bilinear form $\msh(\cdot,\cdot)$ we need to use a
different polynomial reconstruction, which is based on the alternative
projection operator of the next subsection.

\medskip
The next two lemmas establish the local approximation properties of
the virtual element interpolation operator and a polynomial
approximation operator.
These approximation properties hold under the mesh regularity
assumptions~\ASSUM{M}{1}-\ASSUM{M}{2},
cf.~\cite{BeiraodaVeiga-Brezzi-Cangiani-Manzini-Marini-Russo:2013}.
We omit their proof as they are standard results from the literature.
\begin{lemma}
  \label{lemma:interpolation}
  Let $\P$ be a polygonal element of a mesh $\Th$ satisfying
  assumptions~\ASSUM{M}{1}-\ASSUM{M}{2}.
  Then, there exists a real, positive constant $\Cs$ such that for all
  $\vs\in\HS{2}(\P)$ the virtual element interpolant
  $\vsI\in\Vsh(\P)$, which is the function in $\Vsh(\P)$ with the same
  vertex values of $\vs$, is such that
  \begin{align}
    \norm{\vs-\vsI}{0,\P}+\hP\snorm{\vs-\vsI}{1,\P}
    \leq~\Cs\hP^2\snorm{\vs}{2,\P}.
    \label{eq:interpolation}
  \end{align}
  The constant $\Cs$ is independent of the local mesh size $\hP$ but
  may depend on the mesh regularity constant $\rho$.
\end{lemma}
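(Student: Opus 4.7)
The plan is to follow the classical pattern for virtual element interpolation estimates: combine polynomial preservation of the interpolation operator with Bramble--Hilbert-type polynomial approximation on star-shaped domains, and then control the residual harmonic correction via an energy minimization argument.

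First, I would observe that $\PS{1}(\P)\subset\Vsh(\P)$ and that a linear polynomial is its own virtual element interpolant, since its harmonic extension of its piecewise-linear boundary trace returns the polynomial itself; hence the map $\vs\mapsto\vsI$ is invariant on $\PS{1}(\P)$. Under the star-shapedness granted by \ASSUM{M}{1}, the Bramble--Hilbert/Dupont--Scott theory yields some $\qs\in\PS{1}(\P)$ with
\begin{equation*}
\norm{\vs-\qs}{0,\P}+\hP\snorm{\vs-\qs}{1,\P}+\hP^{2}\snorm{\vs-\qs}{2,\P}\leq C\hP^{2}\snorm{\vs}{2,\P},
\end{equation*}
and, by scaled multiplicative trace inequalities (which require \ASSUM{M}{2} to control the number and the minimal length of edges), the edgewise bounds $\norm{\vs-\qs}{0,\E}\leq C\hP^{3/2}\snorm{\vs}{2,\P}$ and $\snorm{\vs-\qs}{1,\E}\leq C\hP^{1/2}\snorm{\vs}{2,\P}$ for every $\E\subset\partial\P$. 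Writing $\vs-\vsI=(\vs-\qs)-(\vs-\qs)_{\INTP}$ reduces the task to estimating $\ws:=(\vs-\qs)_{\INTP}$.

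Next, I would exploit that $\ws$ is harmonic inside $\P$ and that $\restrict{\ws}{\partial\P}$ is the continuous piecewise-linear interpolant of $\vs-\qs$ at the vertices of $\P$. Since harmonic extensions minimize the Dirichlet energy, $\snorm{\ws}{1,\P}\leq\snorm{\Phi}{1,\P}$ for any $\HS{1}(\P)$ lifting $\Phi$ of $\restrict{\ws}{\partial\P}$; a convenient choice (for instance, the piecewise-linear extension through the sub-triangulation of $\P$ obtained by connecting the vertices to the star-shapedness center), together with standard 1D linear interpolation error estimates on each edge applied to $\vs-\qs$, gives $\snorm{\ws}{1,\P}\leq C\hP\snorm{\vs}{2,\P}$. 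For the $\LS{2}$ bound, I would first derive $\norm{\ws}{0,\partial\P}\leq C\hP^{3/2}\snorm{\vs}{2,\P}$ from the analogous edgewise 1D interpolation argument (controlling the tangential derivative on $\E$ via the trace inequality), and then invoke a scaled Poincar\'e--trace inequality on star-shaped polygons of the form $\norm{\ws}{0,\P}\leq C\bigl(\hP^{1/2}\norm{\ws}{0,\partial\P}+\hP\snorm{\ws}{1,\P}\bigr)$ to conclude $\norm{\ws}{0,\P}\leq C\hP^{2}\snorm{\vs}{2,\P}$. Adding the $\HS{1}$ and $\LS{2}$ estimates produces the stated inequality.

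The hard part is not any single conceptual step but ensuring that every constant hidden behind Bramble--Hilbert, the multiplicative trace inequality, the Dirichlet energy minimization, and the scaled Poincar\'e inequality depends only on the regularity constant $\rho$ and not on the particular geometry or number of edges of $\P$. This is precisely where \ASSUM{M}{1}--\ASSUM{M}{2} enter in an essential way: \ASSUM{M}{1} allows the polynomial approximation and Poincar\'e--trace arguments on $\P$ to be reduced to a reference configuration, while \ASSUM{M}{2} prevents degeneration of the edgewise 1D interpolation and trace constants as $\P$ varies across the mesh family.
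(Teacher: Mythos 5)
The paper does not prove this lemma at all: it states that the result is standard and defers to \cite{BeiraodaVeiga-Brezzi-Cangiani-Manzini-Marini-Russo:2013}. Your sketch is a correct reconstruction of exactly that standard argument (polynomial invariance of the interpolant plus Bramble--Hilbert, Dirichlet-energy minimality of the harmonic extension against a piecewise-linear lifting on the sub-triangulation, and scaled trace/Poincar\'e inequalities whose constants depend only on $\rho$ via \ASSUM{M}{1}--\ASSUM{M}{2}), so it is consistent with the approach the paper implicitly relies on.
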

We outline that if a function
$\vs\in\HS{2}(\P)\cap\CS{}(\overline{\P})$ is nonnegative in
$\overline{\P}$, than its interpolant $\vsI\in\Vsh(\P)$ must also be
nonnegative as a consequence of
Lemma~\ref{lemma:VEM:nonnegative-subset}, and it belongs to
$\calKh(\P)$.
In Section~\ref{sec:technical:lemmas}, we discuss the construction of
a nonnegative quasi-interpolation operator since in the convergence
analysis of Section~\ref{sec:convergence:analysis} we must cope with
functions that are only $\HS{1}$-regular.

\begin{lemma}
  \label{lemma:projection}
  Let $\P$ be a polygonal element of a mesh $\Th$ satisfying
  assumptions~\ASSUM{M}{1}-\ASSUM{M}{2}.
  Then, there exists a real, positive constant $\Cs$ such that for all
  $\vs\in\HS{m}(\P)$, $m=1,2$, there exists a polynomial functions
  $\vs_{\pi}\in\PS{1}(\P)$ such that
  \begin{align}
    \norm{\vs-\vs_{\pi}}{0,\P} + \hP\snorm{\vs-\vs_{\pi}}{1,\P}\leq\Cs\hP^{m}\snorm{\vs}{m,\P}.
    \label{eq:projection}
  \end{align}
  The constant $\Cs$ is independent of the local mesh size $\hP$ but
  may depend on the mesh regularity constant $\rho$.
\end{lemma}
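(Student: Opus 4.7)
The plan is to invoke the classical Bramble–Hilbert/Dupont–Scott theory on domains that are star-shaped with respect to a ball. Under assumption \ASSUM{M}{1}, the element $\P$ is star-shaped with respect to a ball $B\subset\P$ of radius at least $\rho\hP$ and centered at a point $\xv_0\in\P$. I would take $\vs_\pi$ to be the averaged Taylor polynomial of degree $1$ associated with $B$, namely
\begin{equation*}
  \vs_{\pi}(\xv) = \int_{B}\Bigl(\vs(\yv)+\nabla\vs(\yv)\cdot(\xv-\yv)\Bigr)\phi(\yv)\dy,
\end{equation*}
where $\phi\in\CS{\infty}(B)$ is a nonnegative cut-off with unit integral, the gradient term being dropped (so that $\vs_{\pi}$ is just the weighted mean of $\vs$) in the case $m=1$ where $\nabla\vs$ may not be pointwise defined. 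In either case, $\vs_{\pi}\in\PS{1}(\P)$ is well-defined on $\HS{m}(\P)$, the map $\vs\mapsto\vs_{\pi}$ is linear, continuous on $\HS{m}(\P)$, and fixes the polynomial subspace $\PS{m-1}(\P)\subset\PS{1}(\P)$.

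The central step is then the Bramble–Hilbert lemma for star-shaped domains, which yields
\begin{equation*}
  \snorm{\vs-\vs_{\pi}}{s,\P}\leq\Cs\,\hP^{m-s}\snorm{\vs}{m,\P},\qquad s=0,1,\ldots,m,
\end{equation*}
for $m=1,2$. Summing the $s=0$ bound and $\hP$ times the $s=1$ bound delivers the statement of the lemma. The constant $\Cs$ in these estimates is a function of the so-called chunkiness parameter of $\P$ (the ratio of the diameter $\hP$ to the largest inscribed ball with respect to which $\P$ is star-shaped), which under \ASSUM{M}{1} is bounded by a quantity depending only on $\rho$.

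The chief obstacle is to ensure that $\Cs$ is independent of $\hP$. This is achieved by the standard scaling argument: one first establishes the estimate on the rescaled element $\widehat{\P}=\hP^{-1}(\P-\xv_0)$, whose diameter is $1$ and which is still star-shaped with respect to a ball of radius at least $\rho$. On $\widehat{\P}$ the Bramble–Hilbert lemma in its reference form (see, e.g., the Dupont–Scott theorem) produces a constant depending only on $\rho$, and the standard change-of-variables scalings of the Sobolev seminorms $\snorm{\cdot}{s,\P}=\hP^{1-s}\snorm{\widehat{\vs}}{s,\widehat{\P}}$ (in two dimensions) transfer the inequality back to $\P$ with exactly the exponents $\hP^{m-s}$ appearing in the statement. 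Once this reference estimate is in place, the two cases $m=1,2$ follow by direct substitution, and the proof is complete.
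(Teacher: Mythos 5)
Your argument is correct: it is precisely the standard averaged-Taylor-polynomial construction combined with the Dupont--Scott form of the Bramble--Hilbert lemma on star-shaped domains and the usual scaling to the reference element of unit diameter, which is exactly the classical result the paper invokes by citation (it omits the proof as "standard"). Your handling of the two cases $m=1,2$ (dropping the gradient term for $m=1$ so that $\vs_{\pi}$ is the weighted mean, which still lies in $\PS{1}(\P)$ and reproduces $\PS{0}(\P)$) and your tracking of the dependence of the constant on the chunkiness parameter, hence on $\rho$ via \ASSUM{M}{1}, are both sound.
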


\subsection{The projection operator $\PiLSP{}$}
Consider the discrete inner product in $\Vsh(\P)$:
\begin{align}
  \qScalP{\vsh}{\wsh} = \mP\sum_{i=1}^{\NVP}\vsh(\xv_i)\wsh(\xv_i)
  \quad\forall\vsh,\wsh\in\Vsh(\P),
  \label{eq:discrete:inner:product}
\end{align}
where $\NVP$ is the number of vertices of $\P$, and
$\xv_i=(\xs_i,\ys_i)^T$, $i=1,\ldots,\NVP$, is the coordinate vector of
the $i$-th vertex of element $\P$.
Then, for every $\vsh\in\Vsh(\P)$, we define $\PiLSP{}\vsh$ as the
linear polynomial that solves the projection problem:
\begin{align}
  \qSCALP{\vsh-\PiLSP{}\vsh}{\qs} = 0 \quad\forall\qs\in\PS{1}(\P).
  \label{eq:PiLSP:def}
\end{align}
This projection operator is computable from the degrees of freedom of
$\vsh$.
Indeed, we consider the expansion of $\PiLSP{}\vsh$ on the scaled
monomial basis of $\PS{1}(\P)$:
\begin{align}
  \PiLSP{}\vsh = \zeta_1\ms_1 + \zeta_2\ms_2 + \zeta_3\ms_3,
  \label{eq:PiLSP:expansion}
\end{align}
with $\zeta_i\in\REAL$, $i=1,2,3$.
Then, we introduce matrix $\matD$, which collects the degrees of
freedom of $\ms_i$ on its $i$-th column, so that
\begin{align}
  \matD =
  \left[
    \begin{array}{ccc}
      \ms_1(\xv_1)     & \ms_2(\xv_1)     & \ms_3(\xv_1)    \\[0.5em]
      \ms_1(\xv_2)     & \ms_2(\xv_2)     & \ms_3(\xv_2)    \\[0.5em]
      \vdots & \vdots                                    \\[0.5em]
      \ms_1(\xv_{\NVP}) & \ms_2(\xv_{\NVP}) & \ms_3(\xv_{\NVP}) \\[0.5em]
    \end{array}
  \right]
  =
  \left[
    \begin{array}{ccc}
      1      & \quad\dfrac{\xs_{1}   -\xsP}{\hP}   & \quad\dfrac{\ys_{1}    -\ysP}{\hP}\\[0.75em]
      1      & \quad\dfrac{\xs_{2}   -\xsP}{\hP}   & \quad\dfrac{\ys_{2}    -\ysP}{\hP}\\[0.75em]
      \vdots & \vdots                             & \vdots                            \\[0.75em]
      1      & \quad\dfrac{\xs_{\NVP} -\xsP}{\hP}   & \quad\dfrac{\ys_{\NVP} -\ysP}{\hP}
    \end{array}
    \right].
  \label{eq:matD:def}
\end{align}
A straightforward calculation allows us to
reformulate~\eqref{eq:PiLSP:def} in the vector form:
\begin{align}
  \big(\matD^T\matD\big)\zetav = \matD\vvh,
  \label{eq:PiLSP:vector:def}
\end{align}
where $\zetav=(\zeta_1,\zeta_2,\zeta_3)^T$ and
$\vvh=\big(\vsh(\xv_1),\vsh(\xv_2),\ldots,\vsh(\xv_{\NVP})\big)^T$.
We note that the $3\times3$-sized matrix $\matD^T\matD$ is such that
$\RANK(\matD^T\matD)=\RANK(\matD)=3$, so it is nonsingular.
Therefore, the solution $\zetav$ of~\eqref{eq:PiLSP:vector:def} is
given by $\zetav=\big(\matD^T\matD\big)^{-1}\matD^T\vvh$.

\medskip
The next lemma characterizes the properties of the projection operator
$\PiLSP{}$.
\begin{lemma}[Properties of $\PiLSP{}$]
  \label{lemma:PiLS:properties}
  Let $\P$ be an element of mesh $\Th$ satisfying mesh
  assumptions~\ASSUM{M}{1}-\ASSUM{M}{2} and $\PiLSP{}$ the projection
  operator defined in~\eqref{eq:PiLSP:def}.
  Then,
  \begin{itemize}
  \item[$(i)$] $\PiLSP{}$ is invariant on the linear polynomials,
    i.e., $\PiLSP{}\qs=\qs$ for every $\qs\in\PS{1}(\P)$, and, thus,
    idempotent, i.e., $\big(\PiLSP{}\big)^2=\PiLSP{}$;
  \item[$(ii)$] $\PiLSP{}$ is bounded in $\LS{2}(\P)$, i.e.,
    $\Norm{\PiLSP{}\vsh}{0,\P}\leq\Cs\NORM{\vsh}{0,\P}$ for every
    $\vsh\in\Vsh(\P)$ and some real, positive constant $\Cs$
    independent of \RED{$\hP$}.
  \end{itemize}
\end{lemma}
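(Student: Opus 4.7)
The plan is to treat the two items separately, handling (i) by a direct uniqueness argument and (ii) via the standard ``Cauchy--Schwarz on the discrete inner product plus norm equivalence'' strategy for oblique projections.

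For part (i), the key observation is that the discrete bilinear form $\qscalP{\cdot}{\cdot}$ is \emph{positive definite} on $\PS{1}(\P)$: this is exactly the rank property $\RANK(\matD^T\matD)=3$ already established below~\eqref{eq:PiLSP:vector:def}, which in turn reflects the fact that the vertices of $\P$ are not collinear (hence they uniquely determine a linear polynomial). Consequently, the variational problem~\eqref{eq:PiLSP:def} admits a unique solution in $\PS{1}(\P)$. If $\vsh=\qs\in\PS{1}(\P)$, then $\PiLSP{}\qs=\qs$ is clearly a solution, so by uniqueness $\PiLSP{}$ is the identity on $\PS{1}(\P)$. Idempotency is then immediate since $\PiLSP{}\vsh\in\PS{1}(\P)$ implies $\big(\PiLSP{}\big)^{2}\vsh=\PiLSP{}\big(\PiLSP{}\vsh\big)=\PiLSP{}\vsh$.

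For part (ii), I would proceed in three steps. First, I test the defining relation~\eqref{eq:PiLSP:def} against the choice $\qs=\PiLSP{}\vsh\in\PS{1}(\P)$, obtaining $\qScalP{\PiLSP{}\vsh}{\PiLSP{}\vsh}=\qScalP{\vsh}{\PiLSP{}\vsh}$. A Cauchy--Schwarz inequality applied to the (semidefinite) bilinear form $\qscalP{\cdot}{\cdot}$ then gives the discrete contraction
\begin{equation*}
  \qScalP{\PiLSP{}\vsh}{\PiLSP{}\vsh}\leq\qScalP{\vsh}{\vsh}.
\end{equation*}
Second, I need the norm equivalence
\begin{equation*}
  \Norm{\qs}{0,\P}^{2}\leq\Cs_{1}\qScalP{\qs}{\qs}\quad\forall\qs\in\PS{1}(\P),
\end{equation*}
which I would prove by a scaling argument to a reference element of unit diameter: under~\ASSUM{M}{1}--\ASSUM{M}{2} the polygon $\P$ maps to a member of a compact family of reference polygons with a uniformly bounded number of vertices, on each of which $\big(\sum_{i}\qs(\xv_{i})^{2}\big)^{\half}$ and $\Norm{\qs}{L^{2}}$ are two norms on the three-dimensional space $\PS{1}$, hence equivalent, with constants controlled by $\rho$ alone. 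Third, I need the converse-type bound
\begin{equation*}
  \qScalP{\vsh}{\vsh}\leq\Cs_{2}\Norm{\vsh}{0,\P}^{2}\quad\forall\vsh\in\Vsh(\P),
\end{equation*}
again by scaling to a reference element: the vertex-value map $\Vsh(\P)\to\REAL^{\NVP}$ is an isomorphism (by the unisolvence property recalled after~\eqref{eq:VEM:space:global}), so on the reference configuration the discrete $\ell^{2}$ norm of the vertex values and the $\LS{2}$ norm of the harmonic extension are equivalent; mesh regularity makes the equivalence constant uniform. Combining the three steps yields
\begin{equation*}
  \Norm{\PiLSP{}\vsh}{0,\P}^{2}
  \leq\Cs_{1}\qScalP{\PiLSP{}\vsh}{\PiLSP{}\vsh}
  \leq\Cs_{1}\qScalP{\vsh}{\vsh}
  \leq\Cs_{1}\Cs_{2}\Norm{\vsh}{0,\P}^{2}.
\end{equation*}

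The expected main obstacle is the third step, namely proving the inverse-type estimate $\qscalP{\vsh}{\vsh}\leq\Cs_{2}\Norm{\vsh}{0,\P}^{2}$ with a constant independent of $\hP$ and of the shape of $\P$. Unlike the polynomial case, here one must exploit that $\Vsh(\P)$ is a finite-dimensional space of harmonic functions parametrized by the vertex values, and argue that the equivalence constants obtained on the reference element are uniform over the whole family of admissible shapes allowed by~\ASSUM{M}{1}--\ASSUM{M}{2}; this is the standard but delicate point where the mesh regularity assumption enters essentially, and can be justified either by a compactness/continuity argument on the reference configurations or by combining the maximum principle on $\P$ (to bound $\max_{i}|\vsh(\xv_{i})|$ by $\Norm{\vsh}{\LS{\infty}(\P)}$) with a virtual element inverse inequality $\Norm{\vsh}{\LS{\infty}(\P)}\leq\Cs\hP^{-1}\Norm{\vsh}{0,\P}$ together with the uniform bound $\NVP\leq\Cs$ and $\mP\leq\Cs\hP^{2}$.
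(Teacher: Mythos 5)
Your proof is correct and follows essentially the same route as the paper: part $(i)$ rests on the invertibility of $\matD^T\matD$ (you argue via uniqueness of the solution of the projection problem, the paper by an explicit computation with the projection matrix $(\matD^T\matD)^{-1}\matD^T$ applied to the columns of $\matD$), and part $(ii)$ combines the contraction property of the orthogonal projection in the discrete inner product with the spectral equivalence of $\TNORM{\,\cdot\,}{\P}$ and $\Norm{\cdot}{0,\P}$. The only substantive difference is that you correctly isolate, and sketch two ways to close, the delicate inverse-type bound $\qScalP{\vsh}{\vsh}\leq\Cs\Norm{\vsh}{0,\P}^2$ for nonpolynomial virtual element functions, whereas the paper disposes of the whole equivalence with a one-line scaling remark whose constants ``may depend on the geometric shape of $\P$''; your observation that only the polynomial case of the reverse inequality is needed (since $\PiLSP{}\vsh\in\PS{1}(\P)$) is a genuine simplification of that half of the argument.
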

\begin{proof}
  $(i)$. Since $\PiLSP{}$ is a linear operator, to prove that it
 is invariant on the linear polynomials, we only need to prove that it
  is invariant on the scaled monomials~\eqref{eq:scaled:monomials},
  i.e., $\PiLSP{}\ms_i=\ms_i$, $i=1,2,3$.
  We note that the vector collecting the degrees of freedom of $\ms_i$
  coincides with the $i$-th column of matrix $\matD$, which we
  indicate by $\textbf{col}(\ms_i)$.
  Let $\ev_i$ be the vector of the canonical basis of $\REAL^{\NVP}$
  having the $i$-th entry equal to $1$ and all other entries equal to
  $0$, so that $\textbf{col}(\ms_i)=\matD\ev_i$.
  The coefficient vector $\zetav_i$ of $\PiLSP{}\ms_i$ in
  expansion~\eqref{eq:PiLSP:expansion} is given by a straightforward
  application of the projection matrix $(\matD^T\matD)^{-1}\matD^T$
  to $\textbf{col}(\ms_i)$:
  \begin{align*}
    \zetav_i
    = (\matD^T\matD)^{-1}\matD^T\textbf{col}(\ms_i)
    = (\matD^T\matD)^{-1}\matD^T\matD\ev_i
    = \ev_i.
  \end{align*}
  Substituting $\zetav_i=\ev_i$ in~\eqref{eq:PiLSP:expansion} yields
  $\PiLSP{}\ms_i=\ms_i$.
  Then, the invariance of $\PiLSP{}$ on the linear polynomials implies
  that
  $\big(\PiLSP{}\big)^2\vsh=\PiLSP{}\big(\PiLSP{}\vsh\big)=\PiLSP{}\vsh$
  for all $\vsh\in\Vsh(\P)$ since $\PiLSP{}\vsh\in\PS{1}(\P)$.

  \medskip
  \noindent
  $(ii)$. Finally, we are left to prove that $\PiLSP{}$ is a bounded
  operator with an inequality constant that is independent of $\hP$.
  Consider the discrete norm
  \begin{align}
    \TNORM{\vsh}{\P}^2
    = \qScalP{\vsh}{\vsh}
    = \mP\sum_{i=1}^{\NVP}\ABS{\vsh(\xv_{i})}^2
    = \mP\abs{\vvh}^2,
    \label{app:eq:discrete:norm}
  \end{align}
  which is induced by the discrete inner
  product~\eqref{eq:discrete:inner:product}.
  We observe that $\PiLSP{}$ is a continuous operator, i.e.,
  $\Tnorm{\PiLSP{}\vsh}{\P}\leq\TNORM{\vsh}{\P}$ for every
  $\vsh\in\Vsh(\P)$.
  Indeed, $\PiLSP{}$ is the orthogonal projection operator with
  respect to the inner product~\eqref{eq:discrete:inner:product} and
  its operator norm is
  $\sup_{\vsh\in\Vsh(\P)\setminus\{0\}}\Tnorm{\PiLSP{}\vsh}{\P}\slash{\TNORM{\vsh}{\P}}=1$.
  The norm defined in~\eqref{app:eq:discrete:norm} is spectrally
  equivalent to the $\LS{2}$ norm, so that there exist two strictly
  positive constant $\xi_*$ and $\xi^*$ such that
  \begin{align}
    \xi_*\NORM{\vsh}{0,\P}\leq\TNORM{\vsh}{\P}\leq\xi^*\NORM{\vsh}{0,\P}
    \quad\forall\vsh\in\Vsh(\P).
    \label{app:eq:norm:equivalence}  
  \end{align}
  The two norms $\NORM{\vsh}{0,\P}$ and $\TNORM{\vsh}{\P}$ have the
  same scaling with respect to $\hP$ because of the explicit
  dependence of norm $\TNORM{\,\cdot\,}{\P}$ on $\mP$.
  Therefore, the two constants $\xi_*$ and $\xi^*$ may depend on the
  geometric shape of $\P$ but must be independent of $\hP$.
  Then, we use the left inequality of~\eqref{app:eq:norm:equivalence},
  the continuity of $\PiLSP{}$, 
  and the right inequality of~\eqref{app:eq:norm:equivalence}, and we find
  that
  \begin{align*}
    \Norm{\PiLSP{}\vsh}{0,\P}
    \leq (\xi_*)^{-1}\Tnorm{\PiLSP{}\vsh}{\P} 
    \leq (\xi_*)^{-1}\Tnorm{\vsh}{\P} 
    \leq \frac{\xi^*}{\xi_*} \NORM{\vsh}{0,\P}.
  \end{align*}
  We complete the proof by setting $\Cs=(\xi^*\slash{\xi_*})$ and
  noting that this constant is independent of $\hP$.
  \ENDPROOF
\end{proof}

To characterize the approximation properties of the projection
operator $\PiLSP{}$, we apply sistematically the result
in~\cite[Theorem~2]{Bramble-Hilbert:1970}, which will be referred
hereafter as the \emph{Bramble-Hilbert lemma}.
For future reference in our paper, we report the statement of this
result below, with a few, very minor changes to adapt it to our
notation and setting.
In the next subsection, we will also use the Bramble-Hilbert lemma to
characterize the approximation of the right-hand side functional
$(\fs,\cdot)$ by $(\fsh,\cdot)$,
cf. Lemma~\ref{lemma:fs:approximation}.

\begin{lemma}[Bramble-Hilbert lemma]
  \label{lemma:Bramble-Hilbert}
  Let $\P$ be a polygonal element with diameter $\hP$ satisfying mesh
  assumptions \ASSUM{M}{1}-\ASSUM{M}{2}.
  Let $\Fs$ be a linear functional on $\WS{k,p}(\P)$ which satisfies
  \begin{itemize}
  \item[$(i)$] $\ABS{\Fs(\us)}\leq\Cs\NORM{\us}{k,p,\P}$ for all
    $\us\in\WS{k,p}(\P)$ with $\Cs$ independent of $\hP$ and $\us$ and
  \item[$(ii)$] $\Fs(\ps)=0$ for all $\ps\in\PS{k-1}(\P)$.
  \end{itemize}
  Then, $\ABS{\Fs(\us)}\leq\Cs_1\hP^{k}\SNORM{\us}{k,p,\P}$ for all
  $\us\in\WS{k,p}(\P)$ with $\Cs_1$ independent of $\hP$ and $\us$.
\end{lemma}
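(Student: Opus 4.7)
My plan is the classical two-step Bramble-Hilbert argument: subtract an optimally chosen polynomial of degree $k-1$ from $\us$, invoke the vanishing property (ii), and then quantify the remainder via a Sobolev polynomial-approximation estimate on star-shaped domains.

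First, by linearity and assumption (ii), $\Fs(\us) = \Fs(\us - \ps)$ for every $\ps \in \PS{k-1}(\P)$, so assumption (i) gives
\begin{equation*}
|\Fs(\us)| \leq \Cs\,\|\us - \ps\|_{k,p,\P} \qquad \forall\,\ps \in \PS{k-1}(\P).
\end{equation*}
Taking the infimum over $\ps$ reduces the proof to bounding $\inf_{\ps \in \PS{k-1}(\P)} \|\us - \ps\|_{k,p,\P}$ by $\hP^k |\us|_{k,p,\P}$.

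Second, I would invoke a Dupont-Scott-type polynomial-approximation estimate. Under \ASSUM{M}{1}, $\P$ is star-shaped with respect to a ball of radius $\rho \hP$, so the averaged Taylor polynomial $\ps^* \in \PS{k-1}(\P)$ centred on this ball satisfies
\begin{equation*}
|\us - \ps^*|_{j,p,\P} \leq \Cs'\,\hP^{k-j}\,|\us|_{k,p,\P}, \qquad j = 0, 1, \ldots, k,
\end{equation*}
with $\Cs'$ depending only on $k$, $p$, and the chunkiness parameter $\rho$. To establish uniformity of $\Cs'$ across the mesh family, I would pass through the rescaled element $\widehat{\P} = (\P - \xvP)/\hP$, which has unit diameter and contains a ball of radius $\rho$. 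On $\widehat{\P}$ the Deny-Lions lemma provides the equivalence of the quotient norm on $W^{k,p}(\widehat{\P})/\PS{k-1}(\widehat{\P})$ with the seminorm $|\cdot|_{k,p,\widehat{\P}}$; scaling back via $|\us|_{j,p,\P} = \hP^{-j + 2/p}\,|\widehat{\us}|_{j,p,\widehat{\P}}$ collects the correct powers of $\hP$. Substituting $\ps^*$ into the first step and combining with (i) yields the thesis with $\Cs_1 = \Cs\,\Cs'$.

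The main obstacle is ensuring that the Deny-Lions constant is uniform across the mesh family, since polygons with arbitrary numbers of edges can appear and a priori the constant depends on the shape of each $\widehat{\P}$. The role of \ASSUM{M}{1}-\ASSUM{M}{2} is precisely to furnish this uniformity: star-shapedness with respect to a ball of radius $\rho$ combined with the minimum edge-length condition bounds the number of edges and confines the rescaled elements $\widehat{\P}$ to a shape-compact family of Lipschitz domains satisfying a uniform cone (and hence Jones extension) property. This yields a uniform polynomial-approximation constant and hence the final constant $\Cs_1$ independent of $\hP$ in the conclusion.
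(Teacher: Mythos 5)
Your route is genuinely different from the paper's: the paper proves this lemma by direct citation of Theorem~2 of Bramble--Hilbert (1970), merely checking that a polygon satisfying \ASSUM{M}{1}--\ASSUM{M}{2} has the strong cone property so that the cited theorem applies with $R=\P$ and $\rho=\hP$; you instead re-derive the result from scratch via the Deny--Lions/Dupont--Scott machinery. The strategy is the standard one and the first step (using $(ii)$ to write $\Fs(\us)=\Fs(\us-\ps)$) and the scaling/compactness discussion are fine.

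However, the final combination does not deliver the stated conclusion. The full, unweighted norm $\NORM{\us-\ps^*}{k,p,\P}$ contains the top-order seminorm $\SNORM{\us-\ps^*}{k,p,\P}=\SNORM{\us}{k,p,\P}$, which is unchanged by subtracting any $\ps^*\in\PS{k-1}(\P)$ and carries no power of $\hP$. Summing your Dupont--Scott bounds $\SNORM{\us-\ps^*}{j,p,\P}\leq\Cs'\hP^{k-j}\SNORM{\us}{k,p,\P}$ over $j=0,\dots,k$ therefore gives only
\begin{equation*}
  \inf_{\ps\in\PS{k-1}(\P)}\NORM{\us-\ps}{k,p,\P}
  \;\leq\;\Cs'\bigl(1+\hP+\cdots+\hP^{k}\bigr)\SNORM{\us}{k,p,\P}
  \;\approx\;\Cs'\SNORM{\us}{k,p,\P},
\end{equation*}
so your argument yields $\ABS{\Fs(\us)}\leq\Cs_1\SNORM{\us}{k,p,\P}$ without the factor $\hP^{k}$; indeed $\inf_{\ps}\NORM{\us-\ps}{k,p,\P}\geq\SNORM{\us}{k,p,\P}$, so no choice of $\ps^*$ can recover it. The missing factor is not recoverable from hypothesis $(i)$ as you have read it: with the unweighted norm the statement is dimensionally inconsistent (test it on $\Fs(\us)=\int_{\P}D^{\alpha}\us\,\ws$ with $\abs{\alpha}=k$, which satisfies $(i)$ and $(ii)$ but not the conclusion uniformly in $\hP$). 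To close the proof you must use the form of the hypothesis actually employed in Bramble--Hilbert's Theorem~2, namely boundedness with respect to the $\hP$-weighted norm $\sum_{j=0}^{k}\hP^{j}\SNORM{\us}{j,p,\P}$; then the $j$-th term contributes $\hP^{j}\cdot\hP^{k-j}=\hP^{k}$ and your argument goes through. (This is consistent with how the lemma is used in the paper: in every application $k=1$ and the functional is bounded by the $\LS{2}$ norm alone, i.e.\ by the $j=0$ term, so the full gain $\hP^{k}$ survives.)
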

\begin{proof}
  This lemma is an immediate consequence
  of~\cite[Theorem~2]{Bramble-Hilbert:1970}, which is set for a domain
  $R$ (with diameter $\rho$) that satisfies the \emph{strong cone
  property}, see~\cite[Section~4.6 (The cone condition)]{Adams-Fournier:2003}.
  A polygonal element $\P$ satisfying mesh assumptions
  \ASSUM{M}{1}-\ASSUM{M}{2} also satisfies such a geometric condition
  on the boundary $\partial\P$, so that we can identify $R$ with $\P$
  and $\rho$ with $\hP$.
  \ENDPROOF
\end{proof}

\begin{lemma}[1 - Approximation property of $\PiLSP{}$]
  \label{lemma:approx:property:1}
  Let $\P$ be a polygonal element satisfying mesh assumptions
  \ASSUM{M}{1}-\ASSUM{M}{2}.
  There exists a real, positive constant $\Cs$ independent of $\hP$
  such that for all virtual element functions $\vsh\in\Vsh(\P)$ and
  polynomials $\qs\in\PS{1}(\P)$ it holds that
  \begin{align}
    \Abs{ \scalP{\vsh-\PiLSP{}\vsh}{\qs} }
    \leq \Cs\,\hP^2\SNORM{\qs}{1,\P}\SNORM{\vsh}{1,\P}.
    \label{eq:approx:property:1}
  \end{align}
\end{lemma}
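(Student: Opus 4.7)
The plan is to combine the $\LS{2}$-boundedness and polynomial invariance of $\PiLSP{}$ (Lemma~\ref{lemma:PiLS:properties}) with the polynomial approximation estimates of Lemma~\ref{lemma:projection} and a splitting of $\qs$ into its constant and oscillatory parts.

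First I would establish the auxiliary $\LS{2}$-error estimate
\begin{align*}
\Norm{\vsh-\PiLSP{}\vsh}{0,\P}\leq \Cs\,\hP\SNORM{\vsh}{1,\P}.
\end{align*}
For this, take $\vs_\pi\in\PS{1}(\P)$ from Lemma~\ref{lemma:projection} with $\ms=1$, so that $\Norm{\vsh-\vs_\pi}{0,\P}\leq\Cs\,\hP\SNORM{\vsh}{1,\P}$; since $\PiLSP{}\vs_\pi=\vs_\pi$ by Lemma~\ref{lemma:PiLS:properties}$(i)$, we rewrite $\vsh-\PiLSP{}\vsh=(\vsh-\vs_\pi)-\PiLSP{}(\vsh-\vs_\pi)$ and invoke the $\LS{2}$-boundedness of Lemma~\ref{lemma:PiLS:properties}$(ii)$ to conclude.

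Next, I would decompose $\qs=\bar{\qs}+(\qs-\bar{\qs})$, where $\bar{\qs}$ is the average of $\qs$ on $\P$, so that the Poincar\'e inequality (equivalently, Lemma~\ref{lemma:projection} applied to $\qs$) yields $\Norm{\qs-\bar{\qs}}{0,\P}\leq\Cs\,\hP\SNORM{\qs}{1,\P}$. By bilinearity of the $\LS{2}$ inner product,
\begin{align*}
\scalP{\vsh-\PiLSP{}\vsh}{\qs}
= \scalP{\vsh-\PiLSP{}\vsh}{\qs-\bar{\qs}}
+ \bar{\qs}\,\scalP{\vsh-\PiLSP{}\vsh}{1}.
\end{align*}
The first summand is controlled by Cauchy--Schwarz together with the two bounds derived above, producing a contribution of the target order $\Cs\,\hP^{2}\SNORM{\vsh}{1,\P}\SNORM{\qs}{1,\P}$.

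The delicate point, where the main obstacle lies, is the second summand $\bar{\qs}\,\scalP{\vsh-\PiLSP{}\vsh}{1}$. My plan is to leverage the discrete orthogonality $\qscalP{\vsh-\PiLSP{}\vsh}{1}=0$, which follows from~\eqref{eq:PiLSP:def} taken with $\qs=1\in\PS{1}(\P)$, to recast $\scalP{\vsh-\PiLSP{}\vsh}{1}$ as the consistency error of the vertex-type quadrature rule applied to $\vsh-\PiLSP{}\vsh$. Applying the Bramble--Hilbert lemma (Lemma~\ref{lemma:Bramble-Hilbert}) to this error functional, which vanishes on constants by construction, in combination with an inverse estimate on the polynomial image $\PiLSP{}\vsh\in\PS{1}(\P)$ and the preceding $\LS{2}$-estimate, yields a bound on $\scalP{\vsh-\PiLSP{}\vsh}{1}$ small enough to absorb the factor $\bar{\qs}$ into the target order $\Cs\,\hP^{2}\SNORM{\vsh}{1,\P}\SNORM{\qs}{1,\P}$. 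Collecting the two contributions delivers the claim.
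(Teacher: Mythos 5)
The first half of your plan is sound: the auxiliary bound $\Norm{\vsh-\PiLSP{}\vsh}{0,\P}\leq\Cs\hP\SNORM{\vsh}{1,\P}$ obtained by inserting $\vs_\pi$ and invoking Lemma~\ref{lemma:PiLS:properties} is exactly Lemma~\ref{lemma:approx:property:2} (which the paper proves via Bramble--Hilbert, but your route is equally valid), and the term $\scalP{\vsh-\PiLSP{}\vsh}{\qs-\bar{\qs}}$ is then correctly controlled at order $\hP^2$ by Cauchy--Schwarz and Poincar\'e. The genuine gap is in the constant-mode term $\bar{\qs}\,\scalP{\vsh-\PiLSP{}\vsh}{1}$, and it cannot be repaired by any smallness argument: take $\qs\equiv 1$, so that the right-hand side of \eqref{eq:approx:property:1} is identically zero while $\bar{\qs}=1$; the claimed estimate then forces the \emph{exact} identity $\scalP{\vsh-\PiLSP{}\vsh}{1}=0$, and no upper bound of the form $\Cs\hP^{\alpha}\SNORM{\vsh}{1,\P}$, however high the power $\alpha$, can be ``absorbed'' into a zero target. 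Your proposed mechanism cannot produce such an identity. The orthogonality $\qScalP{\vsh-\PiLSP{}\vsh}{1}=0$ lives in the vertex-based inner product, not in $\LS{2}(\P)$; $\PiLSP{}$ is an \emph{oblique} projector for $\scalP{\cdot}{\cdot}$ (this obliqueness is the entire reason the lemma is nontrivial), so $\vsh-\PiLSP{}\vsh$ has no reason to be $\LS{2}$-orthogonal to constants. Moreover, the quadrature-error functional $w\mapsto\int_\P w-(\mP\slash{\NVP})\sum_i w(\xv_i)$ involves point values and is therefore not bounded on $\HS{1}(\P)$ in two dimensions, so Lemma~\ref{lemma:Bramble-Hilbert} with $k=1$ does not apply to it, while $k=2$ is unavailable because $\vsh$ is not uniformly in $\HS{2}(\P)$.

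The paper resolves precisely this difficulty by subtracting from $\qs$ not its average $\bar{\qs}$ but the image $\PiLSsP{}\qs$ under the $\LS{2}(\P)$-adjoint of $\PiLSP{}$: the adjoint identity and the idempotency of $\PiLSP{}$ give $\scalP{\vsh-\PiLSP{}\vsh}{\PiLSsP{}\qs}=\scalP{\PiLSP{}\vsh-\big(\PiLSP{}\big)^2\vsh}{\qs}=0$ \emph{exactly}, so no residual constant-mode term survives; one then shows $\Norm{\qs-\PiLSsP{}\qs}{0,\P}\leq\Cs\hP\SNORM{\qs}{1,\P}$ using that $\PiLSsP{}$ preserves cell averages, since $\scalP{\PiLSsP{}\qs}{1}=\scalP{\qs}{\PiLSP{}(1)}=\scalP{\qs}{1}$. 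If you wish to keep your structure, replace $\bar{\qs}$ by $\PiLSsP{}\qs$ throughout; the remainder of your argument then carries over essentially unchanged.
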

\begin{proof}
  Let $\PiLSsP{}:\Vsh(\P)\to\Vsh(\P)$ denote the adjoint operator of
  $\PiLSP{}$ with respect to the inner product in $\LS{2}(\P)$, which
  is formally defined as
  \begin{align}
    \ScalP{\PiLSsP{}\vsh}{\wsh}
    =
    \ScalP{\vsh}{\PiLSP{}\wsh}
    \quad\forall\vsh,\wsh\in\Vsh(\P).
  \end{align}
  This operator projects onto the orthogonal complement of
  $\KER(\PiLSP{})=\big\{\vsh\in\Vsh(\P)\mid\PiLSP{}\vsh=0~\textrm{in}~\P\big\}$.
  In fact, from its definition and the second property in $(i)$ of
  Lemma~\ref{lemma:PiLS:properties}, i.e., $(\PiLSP{})^2=\PiLSP{}$, we
  immediately see that
  \begin{align*}
    \ScalP{\PiLSsP{}\wsh}{\vsh-\PiLSP{}\vsh}
    = \ScalP{\wsh}{\PiLSP{}\big(\vsh-\PiLSP{}\vsh\big)}
    = \ScalP{\wsh}{\PiLSP{}\vsh-\big(\PiLSP{}\big)^2\vsh}
    = 0,
  \end{align*}
  which holds for all $\vsh,\wsh\in\Vsh(\P)$.
  Then, we note that
  $\scalP{\PiLSsP{}\qs}{1}=\scalP{\qs}{\PiLSP{}(1)}=\scalP{\qs}{1}$
  for all $\qs\in\PS{1}(\P)$ since $\PiLSP{}(1)=1$ from property $(i)$
  of Lemma~\ref{lemma:PiLS:properties}.
  Therefore, for any linear polynomial $\qs$, the cell average of
  $\qs$ and $\PiLSsP{}\qs$, respectively denoted by $\overline{\qs}$
  and $\overline{\PiLSsP{}\qs}$ are equal.

  \medskip
  For any linear polynomial $\qs$ defined on $\P$, we now consider the
  linear functional $\FsP{\qs}(\cdot):\Vsh(\P)\to\REAL$ given by
  $\FsP{\qs}(\vsh)=\scalP{\vsh-\PiLSP{}\vsh}{\qs}\slash{\Norm{\qs-\PiLSsP{}\qs}{0,\P}}$.
  The continuity of $\scalP{\cdot}{\cdot}$ implies the boundedness of
  $\FsP{\qs}(\cdot)$, which is condition $(i)$ in
  Lemma~\ref{lemma:Bramble-Hilbert},.
  In fact, it holds that 
  \begin{align}
    \ABS{ \scalP{\vsh-\PiLSP{}\vsh}{\qs} }
    &= \ABS{ \scalP{\vsh-\PiLSP{}\vsh}{\qs-\PiLSsP{}\qs}}
    \leq \Norm{\vsh-\PiLSP{}\vsh}{0,\P}\,\Norm{\qs-\PiLSsP{}\qs}{0,\P}
    \nonumber\\[0.5em]
    &
    \leq \big( \norm{\vsh}{0,\P} + \Norm{\PiLSP{}\vsh}{0,\P} \big)\,\Norm{\qs-\PiLSsP{}\qs}{0,\P}
    \leq (1+\xi^*/\xi_*)\norm{\vsh}{0,\P}\,\Norm{\qs-\PiLSsP{}\qs}{0,\P},
    \label{eq:PiLSP:approx:property:proof:10}
  \end{align}
  where $\xi_*$ and $\xi^*$ are the constants of the equivalence
  relation~\eqref{app:eq:norm:equivalence}.
  Inequality~\eqref{eq:PiLSP:approx:property:proof:10} immediately
  implies that $\FsP{\qs}(\vsh)\leq(1+\xi^*/\xi_*)\NORM{\vsh}{0,\P}$.
  Property $(i)$ of Lemma~\ref{lemma:PiLS:properties} implies
  that $\FsP{\qs}(\ps)=0$ for all $\ps\in\PS{1}(\P)$, which is
  condition $(ii)$ of Lemma~\ref{lemma:Bramble-Hilbert}.
  The Bramble-Hilbert lemma with $k=1$ and $p=2$ implies that
  \begin{align*}
    \ABS{ \FsP{\qs}(\vsh) } \leq \Cs_1\hP\SNORM{\vsh}{1,\P}
    \quad\forall\vsh\in\Vsh(\P),
  \end{align*}
  and, consequently,
  \begin{align}
    \ABS{ \scalP{\vsh-\PiLSP{}\vsh}{\qs} }
    \leq \Cs_1\hP\SNORM{\vsh}{1,\P}\Norm{\qs-\PiLSsP{}\qs}{0,\P}
    \quad\forall\vsh\in\Vsh(\P).
    \label{eq:proof:discrepancy:inner:product:00}
  \end{align}

  \medskip
  To complete the proof of the lemma, we are left to estimate
  $\Norm{\qs-\PiLSsP{}\qs}{0,\P}$.
  To this end, we add and subtract
  $\overline{\qs}=\overline{\PiLSsP{}\qs}$ and use the triangular
  inequality to find that
  \begin{align}
    \Norm{\qs-\PiLSsP{}\qs}{0,\P}
    &\leq
    \NORM{\qs-\overline{\qs}}{0,\P} +
    \Norm{\overline{\PiLSsP{}\qs}-\PiLSsP{}\qs}{0,\P}
    \leq \Cs\hP\SNORM{\qs}{1,\P}.
    \label{eq:proof:discrepancy:inner:product:05}
  \end{align}
  \RED{
  In~\eqref{eq:proof:discrepancy:inner:product:05} we used the
  inequality $\snorm{\PiLSsP{}\qs}{1,\P}\leq\Cs\SNORM{\qs}{1,\P}$,
  which is still a consequence of the fact that $\PiLSsP{}$ is a
  bounded operator and the equivalence of (semi)norms with the same
  kernel in finite dimensional spaces.
  }
  The assertion of the lemma follows by
  applying~\eqref{eq:proof:discrepancy:inner:product:05}
  to~\eqref{eq:proof:discrepancy:inner:product:00}.
  \ENDPROOF
\end{proof}

\begin{lemma}[2 - Approximation property of $\PiLSP{}$]
  \label{lemma:approx:property:2}
  Let $\P$ be a polygonal element satisfying mesh assumptions
  \ASSUM{M}{1}-\ASSUM{M}{2}.
  Then, there exists a real, positive constant $\Cs$ independent of
  $\hh$ such that for all virtual element functions $\vsh\in\Vsh$ it
  holds that
  \begin{align}
    \Norm{\vsh-\PiLSP{}\vsh}{0,\P}\leq \Cs\hP\snorm{\vsh}{1,\P}.
    \label{eq:approx:property:2}
  \end{align}
\end{lemma}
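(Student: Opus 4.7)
The plan is to use the standard polynomial approximation trick that combines the polynomial invariance of $\PiLSP{}$ with its $\LS{2}$-boundedness, both already established in Lemma~\ref{lemma:PiLS:properties}. Since a virtual element function $\vsh\in\Vsh(\P)$ belongs to $\HS{1}(\P)$, we can invoke the polynomial approximation result of Lemma~\ref{lemma:projection} with $m=1$ to obtain a linear polynomial $\vs_\pi\in\PS{1}(\P)$ such that
\begin{align*}
  \Norm{\vsh-\vs_\pi}{0,\P} + \hP\snorm{\vsh-\vs_\pi}{1,\P} \leq \Cs\,\hP\,\snorm{\vsh}{1,\P}.
\end{align*}

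Next I would use the invariance property $(i)$ of Lemma~\ref{lemma:PiLS:properties}, which gives $\PiLSP{}\vs_\pi=\vs_\pi$, so that we can write
\begin{align*}
  \vsh - \PiLSP{}\vsh = (\vsh - \vs_\pi) - \PiLSP{}(\vsh - \vs_\pi).
\end{align*}
Taking the $\LS{2}(\P)$ norm and applying the triangle inequality together with the boundedness of $\PiLSP{}$ in $\LS{2}(\P)$ from property $(ii)$ of Lemma~\ref{lemma:PiLS:properties}, namely $\Norm{\PiLSP{}\ws}{0,\P}\leq\Cs\NORM{\ws}{0,\P}$ with a constant independent of $\hP$, yields
\begin{align*}
  \Norm{\vsh-\PiLSP{}\vsh}{0,\P}
  \leq \Norm{\vsh-\vs_\pi}{0,\P} + \Norm{\PiLSP{}(\vsh-\vs_\pi)}{0,\P}
  \leq (1+\Cs)\,\Norm{\vsh-\vs_\pi}{0,\P}.
\end{align*}

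Combining the two displays gives the claimed bound $\Norm{\vsh-\PiLSP{}\vsh}{0,\P}\leq\Cs\,\hP\,\snorm{\vsh}{1,\P}$. There is no real obstacle here: the only thing to be mindful of is that the boundedness constant in Lemma~\ref{lemma:PiLS:properties} $(ii)$ is $\hP$-independent (but may depend on the mesh regularity constant $\rho$ through the norm-equivalence constants $\xi_*,\xi^*$ used in its proof), which is exactly what we need to conclude that the final constant depends only on $\rho$ and not on $\hP$.
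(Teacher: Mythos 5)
Your proof is correct, but it follows a genuinely different route from the paper's. The paper proves this estimate by duality: it fixes $\ws\in\LS{2}(\P)\setminus\{0\}$, forms the linear functional $\FsP{\ws}(\vsh)=\ScalP{\vsh-\PiLSP{}\vsh}{\ws}\slash\NORM{\ws}{0,\P}$, verifies that it is bounded (using the $\LS{2}$-boundedness of $\PiLSP{}$) and vanishes on $\PS{1}(\P)$ (using the polynomial invariance), invokes the Bramble--Hilbert lemma (Lemma~\ref{lemma:Bramble-Hilbert} with $k=1$, $p=2$) to get $\ABS{\FsP{\ws}(\vsh)}\leq\Cs\hP\SNORM{\vsh}{1,\P}$, and finally takes the supremum over $\ws$ to recover the $\LS{2}(\P)$ norm. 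You instead use the direct ``insert a polynomial approximant'' argument: writing $\vsh-\PiLSP{}\vsh=(I-\PiLSP{})(\vsh-\vs_\pi)$ via the invariance property, bounding $\Norm{(I-\PiLSP{})(\vsh-\vs_\pi)}{0,\P}\leq(1+\Cs)\Norm{\vsh-\vs_\pi}{0,\P}$ via property $(ii)$ of Lemma~\ref{lemma:PiLS:properties}, and concluding with Lemma~\ref{lemma:projection} for $m=1$. Both arguments rest on exactly the same two ingredients (invariance on $\PS{1}(\P)$ and $\hP$-uniform $\LS{2}$-boundedness); yours packages them more elementarily and avoids the duality step, while the paper's version is stylistically uniform with the proof of Lemma~\ref{lemma:approx:property:1}, where the functional/Bramble--Hilbert machinery is harder to dispense with. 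One small point worth making explicit in your version: the boundedness in Lemma~\ref{lemma:PiLS:properties}$(ii)$ is stated for elements of $\Vsh(\P)$, so you should note that $\vs_\pi\in\PS{1}(\P)\subset\Vsh(\P)$ and hence $\vsh-\vs_\pi\in\Vsh(\P)$, which makes the application legitimate.
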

\begin{proof}
  Let $\vsh\in\Vsh$ and consider its restriction to the element
  $\P\in\Th$.
  Consider the linear functional
  $\FsP{\ws}(\vsh)=\ScalP{\vsh-\PiLSP{}\vsh}{\ws}\slash{\NORM{\ws}{0,\P}}$
  for some given function $\ws\in\LS{2}(\P)\setminus\{0\}$.
  Then, condition $(i)$ of Lemma~\ref{lemma:Bramble-Hilbert} is
  satisfied since the application of the Cauchy-Schwarz inequality and
  the boundedness of $\PiLSP{}$ yield
  \begin{align*}
    \ABS{\FsP{\ws}(\vsh)}
    \leq \frac{\Norm{\vsh-\PiLSP{}\vsh}{0,\P}\,\NORM{\ws}{0,\P}}{\NORM{\ws}{0,\P}}
    \leq \Norm{\vsh}{0,\P}+ \Norm{\PiLSP{}\vsh}{0,\P}
    \leq (1+\xi^*/\xi_*)\NORM{\vsh}{0,\P}.
  \end{align*}
  Moreover, condition $(ii)$ of Lemma~\ref{lemma:Bramble-Hilbert} is
  satisfied since $\PiLSP{}$ is invariant on all the linear
  polynomials and, so, $\FsP{\ws}(\qs)=0$ for all $\qs\in\PS{1}(\P)$.
  Since $\vsh\in\Vsh(\P)\subset\HS{1}(\P)$, the Bramble-Hilbert lemma
  (with $p=2$ and $k=1$) yields
  \begin{align*}
    \ABS{\FsP{\ws}(\vsh)}\leq \Cs_1\hP\SNORM{\vsh}{1,\P}
    \quad\forall\vsh\in\Vsh(\P).
  \end{align*}
  Recalling the definition of the $\LS{2}(\P)$ norm and using this
  inequality we obtain the upper bound
  \begin{align*}
    \Norm{\vsh-\PiLSP{}\vsh}{0,\P}
    = \sup_{\ws\in\LS{2}(\P)\setminus\{0\}}\frac{ \ScalP{\vsh-\PiLSP{}\vsh }{\ws} }{ \NORM{\ws}{0,\P} }
    = \sup_{\ws\in\LS{2}(\P)\setminus\{0\}} \ABS{\FsP{\ws}(\vsh)}
    \leq \Cs_1\hP\SNORM{\vsh}{1,\P},
  \end{align*}
  which is the assertion of the lemma.
  \ENDPROOF
\end{proof}

\begin{remark}
  \RED{Estimate~\eqref{eq:approx:property:2} is optimal for the
    virtual element functions having a local $\HS{1}(\P)$-regularity
    and a global $\HS{1}(\Omega)$-regularity.}
  Clearly, for all functions $\vsh\in\Vsh(\P)\cap\HS{2}(\P)$, the
  Bramble-Hilbert lemma would provide an error estimates proportional
  to $\hP^2\SNORM{\vsh}{2,\P}$.
\end{remark}

\subsection{The virtual element bilinear form $\msh(\cdot,\cdot)$}
%
Now, we have all the ingredients for the construction of the discrete
bilinear form $\msh(\cdot,\cdot)$.
We assume that this bilinear form is the sum of elemental
contributions
\begin{align}
  \msh(\ush,\vsh)
  =\sum_{\P\in\Th}\mshP(\ush,\vsh),
  \label{eq:msh:global:def}
\end{align}
where we define each local term as
\begin{align}
  \mshP(\ush,\vsh)
  = \ScalP{\PiLSP{}\ush}{\PiLSP{}\vsh}
  + \sPm\Big((I-\PiLSP{})\ush,(I-\PiLSP{})\vsh\Big).
  \label{eq:msh:local:def}
\end{align}
In~\eqref{eq:msh:local:def}, the bilinear form
$\sPm(\cdot,\cdot):\Vsh\times\Vsh\to\REAL$ can be \emph{any}
computable, symmetric and positive definite bilinear form such that
\begin{align}
  \sigma_*\scalP{\vsh}{\vsh}
  \leq\sPm(\vsh,\vsh)\leq
  \sigma^*\scalP{\vsh}{\vsh}
  \quad\forall\vsh\in\Vsh(\P)\cap\KER\big(\PiLSP{}\big),
  \label{eq:requirement:Sm}
\end{align}
where
$\KER\big(\PiLSP{}\big)=\big\{\vs\in\HS{1}(\P)\cap\CS{}(\overline{\P}):\PiLSP{}\vs=0\big\}$
is the kernel of the projection operator $\PiLSP{}$, and $\sigma_*$
and $\sigma^*$ are two real, positive constants independent of $\hh$
(and $\P$).

\medskip
The discrete bilinear form $\mshP(\cdot,\cdot)$ has the two crucial
properties:

\begin{itemize}
\item[-] {\emph{stability}}: for every virtual element function
  $\vsh\in\Vsh(\P)$, the following stability inequality holds
  \begin{align}
    \mu_*\scalP{\vsh}{\vsh}
    \leq\mshP(\vsh,\vsh)\leq
    \mu^*\scalP{\vsh}{\vsh},
    \label{eq:msh:stability}
  \end{align}
  where we can set $\mu^*=\max(1,\sigma^*)$ and
  $\mu_*=\min(1,\sigma_*)$,
  cf.~\cite{BeiraodaVeiga-Brezzi-Cangiani-Manzini-Marini-Russo:2013};
  
  \medskip
\item[-] {\emph{(weak) linear consistency}}: for \emph{every pair of}
  linear polynomials $\ps,\qs\in\PS{1}(\P)$ it holds that
  \begin{align}
    \mshP(\ps,\qs) = \scalP{\ps}{\qs}.
    \label{eq:msh:weak:consistency}
  \end{align}
\end{itemize}
Property~\eqref{eq:msh:stability} is a consequence of the definition
of the bilinear form $\mshP(\cdot,\cdot)$, the stability
property~\eqref{eq:requirement:Sm} of $\sPm(\cdot,\cdot)$, and the
fact that the norm $\TNORM{\,\cdot\,}{\P}$ induced by
$\qscalP{\cdot}{\cdot}$ is spectrally equivalent to the $\LS{2}$ norm
$\NORM{\,\cdot\,}{0,\P}$ induced by $\scalP{\cdot}{\cdot}$,
cf.~\eqref{app:eq:norm:equivalence}.
Property~\eqref{eq:msh:weak:consistency} is more restrictive than the
usual consistency property of the VEM as it states the exactness of
the bilinear form $\mshP(\cdot,\cdot)$ when \emph{both} its entries
are linear polynomials.
Therefore, condition~\eqref{eq:msh:weak:consistency} is weaker than
the usual consistency property of the VEM, which states that a
discrete bilinear form must be exact if at least one of the entries
(but not necessarily both) is a linear polynomial.
For this reason, we refer to ~\eqref{eq:msh:weak:consistency} as the
\emph{weak consistency} of the method.
It is worth noting that the stronger exactness property of the VEM
holds for the discrete inner
product~\eqref{eq:discrete:inner:product}:
\begin{align*}
  \qSCALP{\PiLSP{}\vsh}{\PiLSP{}\qs}
  = \qSCALP{\PiLSP{}\vsh}{\qs}
  = \qScalP{\vsh}{\qs}
  \quad
  \forall\vsh\in\Vsh(\P),
  \,  
  \forall\qs\in\PS{1}(\P),
\end{align*}
as $\PiLSP{}$ is an orthogonal projector for $\qscalP{\cdot}{\cdot}$
but an oblique one for the regular $\LS{2}$ inner product.
Lemma~\ref{lemma:msh:local:consistency:discrepancy} at the end of this
section characterizes the "obliqueness" of $\PiLSP{}$ by proving that
the discrepancy of the consistency property scales as
$\ABS{\mshP(\vsh,\qs)-\scalP{\vsh}{\qs}}=\mP\calO(\hP^2)$ for any
sufficiently regular $\vsh$ and linear polynomial $\qs$.

\medskip
As $\mshP(\cdot,\cdot)$ is a symmetric and positive definite bilinear
form, it is an inner product and the following lemma stating its
continuity stems out of an application of the Cauchy-Schwarz
inequality.
\begin{lemma}[Continuity]
  \label{lemma:msh:continuity}
  Let $\msh(\cdot,\cdot)$ be the bilinear form defined
  by~\eqref{eq:msh:global:def}-\eqref{eq:msh:local:def}.
  Then, there exists a positive constant $\Cs$ independent of $\hh$
  such that
  \begin{align*}
    \msh(\vsh,\wsh) \leq \Cs\NORM{\vsh}{0},\NORM{\wsh}{0}
  \end{align*}
  for every $\vsh,\wsh\in\Vsh$.
\end{lemma}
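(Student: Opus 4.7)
The plan is to derive the continuity from three ingredients already established in the excerpt: the fact that $\mshP(\cdot,\cdot)$ is an inner product on $\Vsh(\P)$ (symmetry and positive definiteness), the stability estimate~\eqref{eq:msh:stability}, and a discrete Cauchy--Schwarz inequality to glue local bounds into a global one. The decomposition~\eqref{eq:msh:global:def} reduces everything to a local estimate on each element $\P$, and on each element the proof is essentially a two-line argument.

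First, I would fix $\vsh,\wsh\in\Vsh$ and work elementwise. Since $\mshP(\cdot,\cdot)$ is symmetric and positive definite, it defines an inner product on $\Vsh(\P)$, so the Cauchy--Schwarz inequality gives
\begin{align*}
  \mshP(\vsh,\wsh)\leq \mshP(\vsh,\vsh)^{1/2}\,\mshP(\wsh,\wsh)^{1/2}.
\end{align*}
Next, I would invoke the upper stability bound in~\eqref{eq:msh:stability} to control each factor by its $\LS{2}(\P)$ norm:
\begin{align*}
  \mshP(\vsh,\vsh)\leq \mu^{*}\scalP{\vsh}{\vsh}=\mu^{*}\NORM{\vsh}{0,\P}^2,
\end{align*}
and similarly for $\wsh$. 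Combining these two inequalities yields the local continuity
\begin{align*}
  \mshP(\vsh,\wsh)\leq \mu^{*}\NORM{\vsh}{0,\P}\,\NORM{\wsh}{0,\P}.
\end{align*}

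Finally, I would sum the local bounds over all elements $\P\in\Th$ using the global definition~\eqref{eq:msh:global:def} and apply the discrete Cauchy--Schwarz inequality in $\REAL^{\#\Th}$:
\begin{align*}
  \msh(\vsh,\wsh)
  =\sum_{\P\in\Th}\mshP(\vsh,\wsh)
  \leq \mu^{*}\sum_{\P\in\Th}\NORM{\vsh}{0,\P}\NORM{\wsh}{0,\P}
  \leq \mu^{*}\NORM{\vsh}{0}\NORM{\wsh}{0},
\end{align*}
where the last step uses $\sum_{\P}\NORM{\vsh}{0,\P}^2=\NORM{\vsh}{0}^2$ and similarly for $\wsh$. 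Setting $\Cs=\mu^{*}$ gives the stated inequality, and since $\mu^{*}=\max(1,\sigma^{*})$ with $\sigma^{*}$ independent of $\hh$ by~\eqref{eq:requirement:Sm}, the constant is indeed $\hh$-independent.

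There is no real obstacle here; the only care needed is to observe that $\mshP$ being symmetric and positive definite is enough to invoke Cauchy--Schwarz, so one does not need to separately bound the $\PiLSP{}$ and stabilization contributions in~\eqref{eq:msh:local:def}. The stability property~\eqref{eq:msh:stability}, which already packages both contributions, does all the heavy lifting.
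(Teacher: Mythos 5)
Your proof is correct and follows essentially the same route as the paper's: local Cauchy--Schwarz for the inner product $\mshP(\cdot,\cdot)$, the upper stability bound~\eqref{eq:msh:stability} to pass to the $\LS{2}(\P)$ norms, and a discrete Cauchy--Schwarz over the elements, yielding $\Cs=\mu^{*}$. No differences worth noting.
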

\begin{proof}
  First, the left inequality of the stability condition
  \eqref{eq:msh:stability} implies that for every element $\P$, the
  symmetric bilinear form $\mshP(\cdot,\cdot)$ is coercive, and, thus,
  an inner product on $\Vsh(\P)$.
  We apply the Cauchy-Schwarz inequality and the right inequality of
  the stability condition \eqref{eq:msh:stability} to obtain
  \begin{align*}
    \mshP(\vsh,\wsh)
    \leq \big(\mshP(\vsh,\vsh)\big)^{\frac12} \big(\mshP(\wsh,\wsh)\big)^{\frac12}
    \leq \mu^* \NORM{\vsh}{0,\P}\,\NORM{\wsh}{0,\P}.
  \end{align*}
  The assertion of the lemma follows by adding all the elemental
  inequalities and using again the Cauchy-Schwarz inequality to obtain
  \begin{align*}
    \msh(\vsh,\wsh)
    &= \sum_{\P\in\Th}\mshP(\vsh,\wsh)
    \leq \mu^*\sum_{\P\in\Th}\NORM{\vsh}{0,\P}\,\NORM{\wsh}{0,\P}
    \\[0.5em]
    &\leq \mu^*
    \left(\sum_{\P\in\Th}\NORM{\vsh}{0,\P}^2\right)^{\frac12}
    \left(\sum_{\P\in\Th}\NORM{\wsh}{0,\P}^2\right)^{\frac12}
    = \mu^*\NORM{\vsh}{0}\,\NORM{\wsh}{0},
  \end{align*}
  and, finally, setting $\Cs=\mu^*$, which is independent of $\hP$.
  \ENDPROOF
\end{proof}

As previously noted, it generally holds that
$\mshP(\vsh,\qs)\neq\scalP{\vsh}{\qs}$ for a nonpolynomial function
$\vsh\in\Vsh(\P)$ and a polynomial $\qs\in\PS{1}(\P)$.
We characterize the \emph{consistency discrepancy} in the final
Lemmas~\ref{lemma:msh:local:consistency:discrepancy}
and~\ref{lemma:msh:approx:consistency:global} and prove that it
locally scales as $\mP\calO(\hP^2)$ and globally scales as
$\calO(\hh^2)$.
As we will prove in the analysis of
Section~\ref{sec:convergence:analysis}, this behavior is optimal with
respect to $\hh$.
Finally, we characterize the discrepancy in the consistency of the
virtual element bilinear functional $\mshP(\cdot,\qs)$ with respect to
the inner product $\scalP{\cdot}{\qs}$ for any linear polynomial $\qs$
that is due to the use of $\PiLSP{}$ in first term of
definition~\eqref{eq:msh:local:def}.
We refer to the quantity
\begin{align*}
  \calMhP(\vsh,\qs) = \mshP(\vsh,\qs) - \scalP{\vsh}{\qs}
\end{align*}
as the \emph{local consistency discrepancy} for the bilinear form
$\mshP(\cdot,\cdot)$.
By considering all the mesh elements $\P$, we define the
\emph{global consistency discrepancy} as a function of $\vsh\in\Vsh$ and
$\qs\in\PS{1}(\Th)$:
\begin{align}
  \calMh(\vsh,\qs)
  = \sum_{\P\in\Th}\left( \mshP(\vsh,\qs) - \scalP{\vsh}{\qs} \right)
  = \msh(\vsh,\qs) - \scal{\vsh}{\qs}.
  \label{eq:global:consistency:discrepancy:def}
\end{align}

We prove a bound to control the local consistency discrepancy for an
element $\P$ in the following lemma.

\begin{lemma}[Local consistency discrepancy]
  \label{lemma:msh:local:consistency:discrepancy}
  Let $\P$ be a polygonal element satisfying mesh assumptions
  \ASSUM{M}{1}-\ASSUM{M}{2}.
  For all virtual element functions $\vsh\in\Vsh(\P)$ and polynomials
  $\qs\in\PS{1}(\P)$ it holds that
  \begin{align}
    \ABS{ \calMhP(\vsh,\qs) } = 
    \ABS{ \mshP(\vsh,\qs) - \scalP{\vsh}{\qs} }
    \leq \Cs\,\hP^2\snorm{\qs}{1,\P}\snorm{\vsh}{1,\P}.
    \label{eq:msh:local:consistency:discrepancy}
  \end{align}
\end{lemma}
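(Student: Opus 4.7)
The plan is to observe that the bound follows almost immediately from the weak consistency property of $\mshP(\cdot,\cdot)$ combined with the first approximation property of $\PiLSP{}$ already established in Lemma~\ref{lemma:approx:property:1}. The key simplification comes from the fact that $\qs$ is a linear polynomial, so the invariance property $(i)$ of Lemma~\ref{lemma:PiLS:properties}, namely $\PiLSP{}\qs=\qs$, makes the stabilization contribution in the definition~\eqref{eq:msh:local:def} collapse.

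More precisely, I would first rewrite
\begin{align*}
  \mshP(\vsh,\qs)
  = \ScalP{\PiLSP{}\vsh}{\PiLSP{}\qs} + \sPm\big((I-\PiLSP{})\vsh,(I-\PiLSP{})\qs\big)
  = \ScalP{\PiLSP{}\vsh}{\qs},
\end{align*}
since $(I-\PiLSP{})\qs=0$. Subtracting $\scalP{\vsh}{\qs}$ from both sides and using linearity of the $\LS{2}(\P)$ inner product gives
\begin{align*}
  \calMhP(\vsh,\qs)
  = \mshP(\vsh,\qs) - \scalP{\vsh}{\qs}
  = -\scalP{\vsh-\PiLSP{}\vsh}{\qs}.
\end{align*}

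At this point, the bound~\eqref{eq:msh:local:consistency:discrepancy} is an immediate consequence of Lemma~\ref{lemma:approx:property:1}, which precisely controls $\ABS{\scalP{\vsh-\PiLSP{}\vsh}{\qs}}$ by $\Cs\hP^2\SNORM{\qs}{1,\P}\SNORM{\vsh}{1,\P}$. Since both the reduction step and the invocation of the previous lemma are essentially algebraic, I do not anticipate any real obstacle in this proof; all the analytical effort was already absorbed into the proof of Lemma~\ref{lemma:approx:property:1} through the adjoint-operator argument and the Bramble--Hilbert lemma. The only care to take is to make sure the invariance property of $\PiLSP{}$ on $\PS{1}(\P)$ is explicitly cited when killing the stabilization term, so that no hidden dependence on the particular choice of $\sPm(\cdot,\cdot)$ enters the final constant.
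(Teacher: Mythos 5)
Your proof is correct and follows essentially the same route as the paper: both use the invariance $\PiLSP{}\qs=\qs$ to annihilate the stabilization term and reduce $\calMhP(\vsh,\qs)$ to $\pm\scalP{\vsh-\PiLSP{}\vsh}{\qs}$, which is then bounded directly by Lemma~\ref{lemma:approx:property:1}. Your explicit remark that the constant is therefore independent of the choice of $\sPm(\cdot,\cdot)$ is a nice touch that the paper leaves implicit.
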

\begin{proof}
  First, we note that for every linear polynomial
  definition~\eqref{eq:msh:local:def}, the polynomial invariance
  property $(i)$ of Lemma~\ref{lemma:PiLS:properties} implies that
  \begin{align*}
    \mshP(\vsh,\qs) = \scalP{\PiLSP{}\vsh}{\PiLSP{}\qs} =
    \scalP{\PiLSP{}\vsh}{\qs},
  \end{align*}
  so that the consistency discrepancy becomes
  \begin{align*}
    \mshP(\vsh,\qs) - \scalP{\vsh}{\qs}
    = \scalP{\PiLSP{}\vsh-\vsh}{\qs}.
  \end{align*}
  The assertion of the lemma follows from the approximation property
  of $\PiLSP{}$ stated in
  Lemma~\ref{lemma:approx:property:1}.
  \ENDPROOF
\end{proof}
According to Lemma~\ref{lemma:msh:local:consistency:discrepancy}, the
local consistency discrepancy for an element $\P$ is proportional to
$\mP\calO(\hP^2)$ since both $\SNORM{\qs}{1,\P}$ and
$\SNORM{\vsh}{1,\P}$ in the right-hand side
of~\eqref{eq:msh:local:consistency:discrepancy} are proportional to
$\mP^{\half}$.
In view of Lemma~\ref{lemma:msh:local:consistency:discrepancy}, we can
also prove an upper bound for the global consistency discrepancy
defined in~\eqref{eq:global:consistency:discrepancy:def}, which will
be useful in the analysis of Section~\ref{sec:convergence:analysis}.

\RED{
  \begin{remark}
    In Lemma~\eqref{lemma:msh:local:consistency:discrepancy}, we
    proved an upper bound for the local consistency discrepancy of the
    term $\mathcal{M}_h(\vsh,\qs)$, where $\vsh$ is a virtual function
    and $\qs$ is a polynomial.
    Note that $\vsh$ is globally $\HS{1}$ regular and locally a
    harmonic function on polygonal elements including non-convex
    elements.
    Hence, the minimal regularity for the virtual element function is
    $\HS{\frac{3}{2}-\epsilon}(P)$ for all $\epsilon >0$.
    However, the derivation of the error estimate only assumes that
    $\vsh\in\HS{1}(\P)$.
  \end{remark}
}

\begin{lemma}[Global consistency discrepancy]
  \label{lemma:msh:approx:consistency:global}
  Let $\calMh:\Vsh\times\PS{1}(\Th)\to\REAL$ be the bilinear form
  defined in~\eqref{eq:global:consistency:discrepancy:def}.
  Then, for all $\vsh\in\Vsh$ and $\qs\in\PS{1}(\Th)$ it holds that
  \begin{align}
    \ABS{\calMh(\vsh,\qs)} \leq \Cs\hh^2\snorm{\qs}{1,\hh}\snorm{\vsh}{1},
  \end{align}
  using the broken Sobolev seminorm
  $\snorm{\qs}{1,\hh}^2=\sum_{\P\in\Th}\snorm{\qs}{1,\P}^2$.
\end{lemma}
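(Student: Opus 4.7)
The plan is straightforward once one observes that the global consistency discrepancy decomposes additively over the mesh elements: indeed, by the very definition given in~\eqref{eq:global:consistency:discrepancy:def}, we have
\begin{align*}
\calMh(\vsh,\qs) = \sum_{\P\in\Th}\calMhP\bigBra{\restrict{\vsh}{\P},\restrict{\qs}{\P}}.
\end{align*}
Hence I would first apply the triangle inequality to reduce the global statement to a sum of the local bounds already proved in Lemma~\ref{lemma:msh:local:consistency:discrepancy}.

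Next, for each element $\P\in\Th$, I invoke Lemma~\ref{lemma:msh:local:consistency:discrepancy} to get
\begin{align*}
\Abs{\calMhP(\vsh,\qs)}\leq \Cs\,\hP^2\snorm{\qs}{1,\P}\snorm{\vsh}{1,\P}
\leq \Cs\,\hh^2\snorm{\qs}{1,\P}\snorm{\vsh}{1,\P},
\end{align*}
where in the second inequality I used $\hP\leq\hh$ (the mesh size is the maximum of the elemental diameters). Summing over $\P\in\Th$ and applying the discrete Cauchy--Schwarz inequality to the resulting product of seminorms yields
\begin{align*}
\Abs{\calMh(\vsh,\qs)}
\leq \Cs\,\hh^2\sum_{\P\in\Th}\snorm{\qs}{1,\P}\snorm{\vsh}{1,\P}
\leq \Cs\,\hh^2\Bra{\sum_{\P\in\Th}\snorm{\qs}{1,\P}^2}^{\half}\Bra{\sum_{\P\in\Th}\snorm{\vsh}{1,\P}^2}^{\half}.
\end{align*}
Recognizing the first factor as $\snorm{\qs}{1,\hh}$ (the broken $\HS{1}$ seminorm) and the second as $\snorm{\vsh}{1}$ (since $\vsh\in\HSzr{1}(\Omega)\subset\HS{1}(\Omega)$ and the broken seminorm coincides with the global one for conforming functions) completes the argument.

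There is no real obstacle here: the main work has already been done in Lemma~\ref{lemma:msh:local:consistency:discrepancy}, and the passage from local to global consistency is a routine combination of the monotonicity of $\hP\mapsto\hP^2$ with respect to $\hP\leq\hh$ and of the Cauchy--Schwarz inequality for finite sums. The only minor point to keep in mind is that, while $\qs$ is only piecewise polynomial (hence the appearance of the broken seminorm $\snorm{\qs}{1,\hh}$), the virtual element function $\vsh$ is globally $\HSzr{1}$-conforming, so its global and broken $\HS{1}$-seminorms agree, yielding exactly the bound in the statement.
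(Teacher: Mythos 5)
Your proposal is correct and is essentially the paper's own proof: both decompose $\calMh$ elementwise via~\eqref{eq:global:consistency:discrepancy:def}, apply Lemma~\ref{lemma:msh:local:consistency:discrepancy} together with $\hP\leq\hh$ on each element, and conclude with the discrete Cauchy--Schwarz inequality and the definitions of $\snorm{\cdot}{1,\hh}$ and $\snorm{\cdot}{1}$. Your closing remark about the conformity of $\vsh$ making its broken and global seminorms coincide is a sensible clarification, though the paper leaves it implicit.
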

\begin{proof}
  This lemma is an immediate consequence of
  Lemma~\ref{lemma:msh:local:consistency:discrepancy}.
  Indeed, we sum all the local inequalites of the right-hand side of
  \eqref{eq:msh:local:consistency:discrepancy}; then, we note that
  $\hP\leq\hh$ for all $\P\in\Th$, and use the Cauchy-Schwarz
  inequality and the definition of the seminorms
  $\snorm{\cdot}{1,\hh}$ and $\snorm{\cdot}{1}$:
   \begin{align*}
     \ABS{\calMh(\vsh,\qs)}
     &\leq \sum_{\P\in\Th}\ABS{ \mshP(\vsh,\qs) - \scalP{\vsh}{\qs} }
     \leq \Cs\sum_{\P\in\Th}\hP^2\snorm{\qs}{1,\P}\snorm{\vsh}{1,\P}
     = \Cs\hh^2\snorm{\qs}{1,\hh}\snorm{\vsh}{1}.
  \end{align*}
   \ENDPROOF
\end{proof}
Using the same argument as for the local case (see the comment after
Lemma~\ref{lemma:msh:local:consistency:discrepancy}), we see that the
consistency discrepancy is roughly proportional to
$\ABS{\Omega}\calO(\hh^2)$.

\subsection{The virtual element bilinear form $\ash(\cdot,\cdot)$}
We assume that the bilinear form $\ash(\cdot,\cdot)$ is given by the
sum of elemental contributions
\begin{align}
  \ash(\ush,\vsh)
  =\sum_{\P\in\Th}\ashP(\ush,\vsh),
  \label{eq:ash:global:def}
\end{align}
where we define each local term as
\begin{align}
  \ashP(\ush,\vsh) = \asP (\PinP{}\ush,\PinP{}\vsh) +
  \sPa\Big((I-\PinP{})\ush,(I-\PinP{})\vsh\Big).
  \label{eq:ash:local:def}
\end{align}
In~\eqref{eq:ash:local:def}, the bilinear form
$\sPa(\cdot,\cdot):\Vsh(\P)\times\Vsh(\P)\to\REAL$ can be \emph{any}
computable, symmetric and positive definite bilinear form such that
\begin{align}
  \gamma_*\asP(\vsh,\vsh)
  \leq\sPa(\vsh,\vsh)\leq
  \gamma^*\asP(\vsh,\vsh)
  \quad\forall\vsh\in\Vsh(\P)\cap\KER\big(\PinP{}\big),
  \label{eq:requirement:Sa}
\end{align}
where
$\KER\big(\PinP{}\big)=\big\{\vs\in\HS{1}(\P):\PinP{}\vs=0~\textrm{in}~\P\big\}$ is
the kernel of the projection operator $\PinP{}$ and $\gamma_*$ and
$\gamma^*$ are two real, positive constants independent of $\hh$ (and
$\P$).

\RED{
\begin{remark}
  From \eqref{eq:requirement:Sm} and \eqref{eq:requirement:Sa}, we
  deduce that the stabilizers $S_m^E(\cdot,\cdot)$ and
  $S_a^E(\cdot,\cdot)$ are spectraly equivalent to the bilinear
  forms $(\cdot,\cdot)_{\P}$ and $\asP(\cdot,\cdot)$, respectively.
  In other words, $S_m^{\P}(\cdot,\cdot)$ and
  $S_a^{\P}(\cdot,\cdot)$ must scale as $(\cdot,\cdot)_E$ and
  $\as^P(\cdot,\cdot)$.
  Accordingly, we considered the following choice of stabilizers
  \begin{align*}
    \sPm(\Xi_i,\Xi_j) =\mP\sum_{z=1}^{N^{\text{dof}}} \DOFS{\Xi_i}{z}\,\DOFS{\Xi_j}{z}, \\
    \sPa(\Xi_i,\Xi_j) =   \sum_{z=1}^{N^{\text{dof}}} \DOFS{\Xi_i}{z}\,\DOFS{\Xi_j}{z},
  \end{align*}
  where $\{\Xi_{i}\}$ is the $i$-th canonical basis function of the
  virtual element space $\Vsh(\P)$ and function $\DOFS{z}(\cdot)$
  returns the $z$-th degree of freedom of its argument.
\end{remark}
}

\medskip
The discrete bilinear form $\ashP(\cdot,\cdot)$ satisfies the
following properties:

\begin{itemize}
\item[-] {\emph{stability}}: for every virtual element function
  $\vsh\in\Vsh(\P)$, the following stability inequality holds
  \begin{align}
    \alpha_*\asP(\vsh,\vsh)
    &\leq\ashP(\vsh,\vsh)
    \leq\alpha^*\asP(\vsh,\vsh)\quad\forall\vsh\in\Vsh(\P),
    \label{eq:ash:stability}
  \end{align}
  where we can set $\alpha^*=\max(1,\gamma^*)$ and
  $\alpha_*=\min(1,\gamma_*)$,
  cf.~\cite{BeiraodaVeiga-Brezzi-Cangiani-Manzini-Marini-Russo:2013};

  \medskip
  \item[-] {\emph{linear consistency}}: for all $\vsh\in\Vsh(\P)$ and
  $\qs\in\PS{1}(\P)$ it holds that
  \begin{align}
    \ashP(\vsh,\qs) = \asP(\vsh,\qs).
    \label{eq:ash:consistency}
  \end{align}
\end{itemize}
The constants $\alpha_*$ and $\alpha^*$ in~\eqref{eq:ash:stability}
are independent of $\hh$.
The linear consistency is an immediate consequence of the fact that
the stabilization term $\sPa(\cdot,\cdot)$ in~\eqref{eq:ash:local:def}
is zero if one of its entries $\ush$ or $\vsh$ is a linear polynomial
and $\PinP{}$ is the orthogonal projection with respect to the (semi)
inner product in $\HS{1}(\P)$.

\subsection{Right-hand side functional}
\label{subsec:RHS}
In this section, we omit to indicate the explicit dependence on $t$ in
$\fs(t)$ and the corresponding approximation $\fsh(t)$ to simplify the
notation.
To approximate in space the right-hand side of~\eqref{eq:parb:ineq:A},
we first split the linear functional $\scal{\fs}{\cdot}$ in the
summation of elemental terms $\scalP{\fs}{\cdot}$.
Then, we approximate every elemental term $\scalP{\fs}{\cdot}$ by the
virtual element linear functional $\scalP{\fsh}{\cdot}$, so that
\begin{align}
  \scal{\fsh}{\vsh} = \sum_{\P\in\Th}\scalP{\fsh}{\vsh}
  \quad\forall\vsh\in\Vsh.
  \label{eq:RHS:global:def}
\end{align}
The local linear functional $\scalP{\fsh}{\vsh}$ is defined as follows
\begin{align}
  \scalP{\fsh}{\vsh} = \scalP{\fs}{\PiLSP{}\vsh},
  \label{eq:RHS:local:def}
\end{align}
i.e., by taking $\PiLSP{}\vsh$ instead of $\vsh$ in every polygonal
cell $\P$.
The integral in the right-hand side is clearly computable since the
projection $\PiLSP{}\vsh$ is computable from the degrees of freedom of
$\vsh$.
We characterize this approximation in the following lemma.
\begin{lemma}
  \label{lemma:fs:approximation}
  Let $\fs\in H^1(\Omega)$.
  Under assumptions \ASSUM{M}{1}-\ASSUM{M}{2}, there exists a real,
  positive constant $\Cs$ independent of $\hh$ such that for every
  $\vsh\in\Vsh$ it holds that
  \begin{align}
    \ABS{ \scal{\fs-\fsh}{\vsh} }\leq \Cs\hh^2 \snorm{f}{1} \snorm{\vsh}{1}.
    \label{eq:fs:approximation}
  \end{align}
\end{lemma}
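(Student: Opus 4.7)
The plan is to reduce the estimate to a per-element computation and then use the two approximation lemmas already proved for $\PiLSP{}$. From the definition~\eqref{eq:RHS:local:def}, we have
\begin{align*}
  \scal{\fs-\fsh}{\vsh} = \sum_{\P\in\Th}\scalP{\fs}{\vsh-\PiLSP{}\vsh},
\end{align*}
so it suffices to bound each elemental term by $\Cs\hP^{2}\snorm{\fs}{1,\P}\snorm{\vsh}{1,\P}$ and then sum using Cauchy--Schwarz together with $\hP\leq\hh$.

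The main difficulty is that Lemma~\ref{lemma:approx:property:1} provides the bound $\ABS{\scalP{\vsh-\PiLSP{}\vsh}{\qs}}\leq\Cs\hP^{2}\snorm{\qs}{1,\P}\snorm{\vsh}{1,\P}$ only when the second argument is a \emph{polynomial}, whereas here the second argument is the generally nonpolynomial function $\fs$. To circumvent this, I would introduce the linear polynomial approximation $\fs_{\pi}\in\PS{1}(\P)$ supplied by Lemma~\ref{lemma:projection} with $m=1$, and split
\begin{align*}
  \scalP{\fs}{\vsh-\PiLSP{}\vsh}
  = \scalP{\fs-\fs_{\pi}}{\vsh-\PiLSP{}\vsh} + \scalP{\fs_{\pi}}{\vsh-\PiLSP{}\vsh}.
\end{align*}

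For the first term, Cauchy--Schwarz together with $\NORM{\fs-\fs_{\pi}}{0,\P}\leq\Cs\hP\snorm{\fs}{1,\P}$ (from Lemma~\ref{lemma:projection}) and the $\LS{2}$-approximation estimate $\Norm{\vsh-\PiLSP{}\vsh}{0,\P}\leq\Cs\hP\snorm{\vsh}{1,\P}$ of Lemma~\ref{lemma:approx:property:2} yield the required $\calO(\hP^{2})$ bound. For the second term, Lemma~\ref{lemma:approx:property:1} applied with $\qs=\fs_{\pi}$ gives $\Cs\hP^{2}\snorm{\fs_{\pi}}{1,\P}\snorm{\vsh}{1,\P}$; the factor $\snorm{\fs_{\pi}}{1,\P}$ is then controlled by $\snorm{\fs}{1,\P}$ using the triangle inequality and the stability estimate $\snorm{\fs-\fs_{\pi}}{1,\P}\leq\Cs\snorm{\fs}{1,\P}$ which follows directly from Lemma~\ref{lemma:projection} with $m=1$.

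Summing the elemental inequalities, bounding $\hP\leq\hh$, and applying the discrete Cauchy--Schwarz inequality to the product of seminorms produces
\begin{align*}
  \ABS{\scal{\fs-\fsh}{\vsh}}
  \leq \Cs\hh^{2}\Bigl(\sum_{\P\in\Th}\snorm{\fs}{1,\P}^{2}\Bigr)^{\frac12}\Bigl(\sum_{\P\in\Th}\snorm{\vsh}{1,\P}^{2}\Bigr)^{\frac12}
  = \Cs\hh^{2}\snorm{\fs}{1}\snorm{\vsh}{1},
\end{align*}
which is the assertion. The only subtle point is the polynomial/nonpolynomial splitting step; once that is in place, the remainder of the argument is routine manipulation with the previously established approximation lemmas.
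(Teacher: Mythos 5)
Your proposal is correct and follows essentially the same route as the paper's proof: the same splitting $\scalP{\fs}{\vsh-\PiLSP{}\vsh}=\scalP{\fs-\fs_{\pi}}{\vsh-\PiLSP{}\vsh}+\scalP{\fs_{\pi}}{\vsh-\PiLSP{}\vsh}$, the same use of Lemmas~\ref{lemma:projection}, \ref{lemma:approx:property:1} and \ref{lemma:approx:property:2}, and the same summation via Cauchy--Schwarz. You are in fact slightly more explicit than the paper in citing Lemma~\ref{lemma:approx:property:2} for the first term and in controlling $\snorm{\fs_{\pi}}{1,\P}$ by $\snorm{\fs}{1,\P}$.
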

\begin{proof}
  Let $\P$ be a mesh element.
  Starting from~\eqref{eq:RHS:local:def}, we add and subtract the
  polynomial approximation $\fs_{\pi}$, use the Cauchy-Schwarz
  inequality, and the result of Lemmas~\ref{lemma:projection}
  and~\ref{lemma:approx:property:1}, to obtain the inequality chain
  \begin{align*}
    \ScalP{\fs-\fsh}{\vsh}
    &= \ScalP{\fs}{\vsh-\PiLSP{}\vsh}
   = \ScalP{\fs-\fs_{\pi}}{\vsh-\PiLSP{}\vsh} + \ScalP{\fs_{\pi}}{\vsh-\PiLSP{}\vsh}
   \\[0.5em]
    &\leq
    \NORM{\fs-\fs_{\pi}}{0,\P}\,\Norm{\vsh-\PiLSP{}\vsh}{0,\P} +
    \Cs\hP^2\snorm{\fs_{\pi}}{1,\P}\,\snorm{\vsh}{1,\P}
    \leq \Cs\hP^2\snorm{\fs}{1,\P}\,\snorm{\vsh}{1,\P}.
  \end{align*}
  The assertion of the lemma follows by adding all elemental
  inequalities and using again the Cauchy-Schwarz inequality.
\end{proof}


\subsection{Well-posedness}

Well-posedness is established in Theorem~\ref{theorem:wellposedness},
which is the major result of this subsection.
This theorem states the existence and uniqueness of the solution
$\Ussh{n+1}$ of the fully discrete scheme~\eqref{eq:VEM:A} at every
time iteration $n$.  
We provide two distinct proofs of this theorem to highlight two
different aspects of the virtual element method.
The first proof is based on an application of the Contraction
Mapping Theorem after a reformulation of the
parabolic variational inequality problem as the fixed point problem of
a contractive mapping.
This property makes it possible to implement the method through inner iterations
that are performed at every time step to update the solution in time.
The second proof is based on the minimization of a quadratic
functional on the convex set $\calKh$.
We propose this alternative proof because it extends the argument that
is briefly mentioned in~\cite{Johnson:1976} to the virtual element
setting and provides an hint for a practical algorithmic
implementation based on solving a minimization problem at any time
iteration.
\begin{theorem}[Well-posedness]
  \label{theorem:wellposedness}
  Let $\Ussh{0}$ be the initial solution at time $t=0$
  satisfying~\eqref{eq:VEM:B}.
  Then, at every time step $\ts^{n+1}$ for $0\leq\ns\leq\Ns-1$,
  the solution $\Ussh{n+1}$ of the 
  fully discrete scheme~\eqref{eq:VEM:A} 
  exists and is unique.
\end{theorem}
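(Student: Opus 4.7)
The plan is to multiply the inequality~\eqref{eq:VEM:A} by $\Delta\ts > 0$ and regroup terms so that the scheme takes the standard form of an elliptic variational inequality at each time step: find $\Ussh{n+1}\in\calKh$ such that
\begin{align*}
  \bs_{\hh}\bigl(\Ussh{n+1},\vsh-\Ussh{n+1}\bigr)
  \geq
  \ell_{\hh}\bigl(\vsh-\Ussh{n+1}\bigr)
  \qquad\forall\vsh\in\calKh,
\end{align*}
where $\bs_{\hh}(\us,\vs):=\msh(\us,\vs)+\Delta\ts\,\ash(\us,\vs)$ and $\ell_{\hh}(\vs):=\Delta\ts\,\scal{\fsh^{n+1}}{\vs}+\msh(\Ussh{n},\vs)$. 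Two preliminary facts are then isolated: first, $\calKh$ is a nonempty (it contains the zero function), closed, and convex subset of the finite-dimensional space $\Vsh$, where convexity follows from Corollary~\ref{lemma:VEM:nonnegative-subset:global} since nonnegativity of vertex values is stable under convex combinations; second, $\bs_{\hh}$ is symmetric, continuous on $\Vsh\times\Vsh$, and coercive on $\Vsh$, because the stability bounds~\eqref{eq:msh:stability} and~\eqref{eq:ash:stability} together with Poincar\'e's inequality give $\bs_{\hh}(\vsh,\vsh)\geq \mu_*\NORM{\vsh}{0}^2+\alpha_*\alpha\NORM{\vsh}{1}^2$, while continuity follows from Lemma~\ref{lemma:msh:continuity} and the boundedness of $\asP$.

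For the first proof, I would recast the variational inequality as a fixed point equation. Let $B_{\hh},L_{\hh}\in\Vsh$ denote the Riesz representatives of $\vs\mapsto\bs_{\hh}(\Ussh{n+1},\vs)$ and $\ell_{\hh}$ in the $\msh$-inner product, and let $\Pi_{\calKh}$ be the $\msh$-orthogonal projection onto the closed convex set $\calKh$. Then $\Ussh{n+1}$ solves the variational inequality if and only if $\Ussh{n+1}=\Pi_{\calKh}\bigl(\Ussh{n+1}-\rho(B_{\hh}\Ussh{n+1}-L_{\hh})\bigr)$ for any $\rho>0$. Using the $1$-Lipschitz property of $\Pi_{\calKh}$ in the $\msh$-norm and the coercivity/continuity of $\bs_{\hh}$, I would show that for $\rho$ sufficiently small the associated map $\Ts:\calKh\to\calKh$ is a contraction, and the Banach fixed point theorem yields a unique $\Ussh{n+1}$. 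This is the contractive formulation alluded to in the introduction.

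For the second proof, symmetry of $\bs_{\hh}$ allows one to introduce the quadratic energy
\begin{align*}
  \calJ(\vs):=\tfrac{1}{2}\bs_{\hh}(\vs,\vs)-\ell_{\hh}(\vs),
\end{align*}
which is strictly convex, continuous, and coercive on $\Vsh$ thanks to the coercivity and continuity of $\bs_{\hh}$. Since $\calKh$ is a nonempty, closed, and convex subset of the finite-dimensional space $\Vsh$, standard convex analysis produces a unique minimizer $\Ussh{n+1}\in\calKh$. The equivalence with the variational inequality is then obtained by testing the first-order optimality condition along admissible directions $\vs-\Ussh{n+1}$ for $\vs\in\calKh$ (available because $\calKh$ is convex), which yields exactly the inequality~\eqref{eq:VEM:A}.

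The main obstacle, shared by both approaches, is establishing the uniform (in $\hh$) coercivity and continuity of $\bs_{\hh}$ on $\Vsh$, since these properties rely on the stability bounds for the two stabilisation forms and on the fact that $\calKh$ inherits convexity from the characterisation of nonnegative virtual element functions via their degrees of freedom. Once these structural properties are in place, both proofs proceed by invoking classical abstract results (Banach contraction principle and direct method in the calculus of variations, respectively).
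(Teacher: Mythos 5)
Your proposal is correct and follows essentially the same two routes as the paper: a fixed-point/contraction argument built on the projection onto the closed convex set $\calKh$, and a strictly convex quadratic minimization over $\calKh$ whose first-order optimality condition recovers the variational inequality. The only differences are cosmetic (you take Riesz representatives and the projection with respect to the $\msh$-inner product rather than the plain $\LS{2}$ one, and you invoke the finite-dimensional direct method where the paper runs an explicit minimizing-sequence/parallelogram-law argument), so no further comparison is needed.
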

\noindent
\textbf{Proof 1 - Well-posedness using the Contraction Mapping Theorem.}
We rewrite the parabolic inequality~\eqref{eq:VEM:A} in the
following form
\begin{align}
  \msh(\Ussh{n+1},\Ussh{n+1}-\vsh)
  + \Delta\ts\ash(\Ussh{n+1},\Ussh{n+1}-\vsh)
  \leq
  \Delta\ts\scal{\fssh{n+1}}{\Ussh{n+1}-\vsh}
  + \msh(\Ussh{n},\Ussh{n+1}-\vsh).
  \label{eq:wellposed:1}
\end{align}
Then, we introduce the bilinear form
\begin{align}
  \Ash\big(\Ussh{n+1},\vsh\big)
  = \msh(\Ussh{n+1},\vsh) + \Delta\ts\ash(\Ussh{n+1},\vsh).
  \label{eq:wellposed:2}
\end{align}
For any fixed $\Ussh{n+1}\in\Vsh$, the mapping
$\vsh\mapsto\Ash\big(\Ussh{n+1},\vsh\big)$ is linear and bounded from
above in view of the right inequalities in~\eqref{eq:msh:stability}
and~\eqref{eq:ash:stability}, and thus, belongs to the dual space
$(\Vsh)^\prime$.
Therefore, we can find an operator $\calB:\Vsh\to(\Vsh)^\prime$ such
that $\calB\Ussh{n+1}$ is the Riesz representative of
$\Ash\big(\Ussh{n+1},\cdot\big)$ in $\Vsh$.
Formally, we can write that
$\Scal{\calB\Ussh{n+1}}{\vsh}=\Ash\big(\Ussh{n+1},\vsh\big)$ for all
$\vsh\in\Vsh$.
The stability conditions~\eqref{eq:msh:stability}
and~\eqref{eq:ash:stability} implies that $\calB$ is also bounded from
below and from above.
So, there exists two real positive constants $\Cs_*$ and $\Cs^*$ such
that
\begin{align*}
  \Cs_*\NORM{\vsh}{0}^2\leq\Scal{\calB\vsh}{\vsh}\leq\Cs^*\NORM{\vsh}{0}^2
  \quad\forall\vsh\in\Vsh.
\end{align*}
The constants $\Cs_*$ and $\Cs^*$ only depend on $\mu^*$, $\mu_*$,
$\alpha^*$, $\alpha_*$, and $\Delta\ts$, but are independent of $\hh$.
Analogously, there exists an element $\bsh\in\Vsh$ such that
\begin{align}
  \scal{\bsh}{ \Ussh{n+1}-\vsh } =
  \Delta\ts\scal{\fssh{n+1}}{\Ussh{n+1}-\vsh} + \msh(\Ussh{n},\Ussh{n+1}-\vsh).
  \label{eq:wellposed:3}
\end{align}
Using \eqref{eq:wellposed:2} and \eqref{eq:wellposed:3}, we
reformulate the parabolic variational inequality~\eqref{eq:VEM:A} as:
\begin{align}
  \emph{Find $\Ussh{n+1}\in\calKh$ such that:}\quad
  \Scal{\calB\Ussh{n+1}}{\Ussh{n+1}-\vsh} \leq \scal{\bsh}{ \Ussh{n+1}-\vsh }
  \qquad\forall\vsh\in\calKh.
  \label{eq:wellposed:new:form}
\end{align}
We add and subtract $\Ussh{n+1}$ and introduce a real factor $\beta>0$
such that
\begin{align}
  \Scal{ \beta\big(\bsh-\calB\Ussh{n+1}\big) + \Ussh{n+1} - \Ussh{n+1}   }{\vsh-\Ussh{n+1}}
  \leq 0.
  \label{eq:wellposed:beta}
\end{align}
Let $\calP:\Vsh\to\calKh$ be the projection operator on $\calKh$ such
that for any $\omega\in\Vsh$, $\calP\omega$ is the solution to the
variational inequality
\begin{align}
  \Scal{\omega-\calP\omega}{\vsh-\calP\omega} \leq 0
  \quad\forall\vsh\in\calKh.
  \label{eq:wellposed:projection}
\end{align}
By comparing \eqref{eq:wellposed:beta}
and~\eqref{eq:wellposed:projection}, we identify
$\omega=\beta\big(\bsh-\calB\Ussh{n+1}\big) + \Ussh{n+1}$ and
$\calP\omega=\Ussh{n+1}$.
Therefore, solving \eqref{eq:wellposed:new:form} is equivalent to
solving the nonlinear problem:
\begin{align*}
  \emph{Find $\Ussh{n+1}\in\calKh$ such that:}~~
  \Ussh{n+1} = \calP\big( \beta\bsh - \beta\calB\Ussh{n+1} + \Ussh{n+1} \big).
\end{align*}
Now, we introduce the affine mapping
\begin{equation}
  \calG_{\beta}(\vs) = \calP\big( \beta\bsh - \beta\calB\vs + \vs ).
  \label{eq:wellposed:nonlinear}
\end{equation}
Using~\eqref{eq:wellposed:nonlinear}, we finally reformulate the
parabolic variational inequality problem as the fixed point problem:
\begin{align}
  \emph{Find $\Ussh{n+1}\in\calKh$ such that:~}\calG_{\beta}(\Ussh{n+1}) = \Ussh{n+1}.
  \label{eq:wellposed:equivalent:pblm}
\end{align}
The fixed point exists and is unique in view of the Contraction
Mapping Theorem since $\calG_{\beta}(\cdot)$ is a contractive mapping.
To prove this statement, we consider two arbitrary functions
$\vsp,\vspp\in\calKh$.
Then, from definition \eqref{eq:wellposed:nonlinear} and noting that
$\calP$ is bounded, we obtain
\begin{align*}
  \NORM{ \calG_{\beta}(\vsp) - \calG_{\beta}(\vspp) }{0} 
  &\leq \NORM{ \big(\beta\bsh - \beta\calB\vsp + \vsp\big) - \big(\beta\bsh - \beta\calB\vspp + \vspp\big) }{0}
  \\[0.5em]
  &\leq \NORM{ \beta\calB(\vspp-\vsp) - (\vspp-\vsp\big) }{0}.
\end{align*}
A straightforward calculation using the boundedness of operator
$\calB$ yields:
\begin{align*}
  \NORM{ \calG_{\beta}(\vsp) - \calG_{\beta}(\vspp) }{0}^2
  &\leq
  \beta^2\NORM{ \calB(\vspp-\vsp) }{0}^2
  + \NORM{ \vspp-\vsp }{0}^2
  - 2\beta\scal{\calB(\vspp-\vsp)}{\vspp-\vsp}
  \\[0.5em]
  &\leq
  \big(1+\beta^2 (\Cs^{*})^2 - 2\beta\Cs_{*}\big)\NORM{\vspp-\vsp}{0}^2.
\end{align*}
Mapping $\calG_{\beta}(\cdot)$ is a contraction by setting
$\big(1+\beta^2(\Cs^{*})^2-2\beta\Cs_{*}\big)<1$, i.e., by choosing
$\beta\in\big(0,\frac{2 C_{*}}{(C^{*})^2}\big)$.
The application of the Contraction Mapping Theorem immediately imply
that $\calG_{\beta}(\cdot)$ has precisely one fixed point, which is
the solution of \eqref{eq:wellposed:equivalent:pblm}.
This fixed point is $\Ussh{n+1}$, the virtual element solution at time
$\ts^{n+1}$.
On iterating this argument at every time step shows that problem
\eqref{eq:VEM:A} is well-posed.
\ENDPROOF

\medskip
\noindent
\textbf{Proof 2 - Well-posedness using the minimization theory.}
First, we rewrite inequality \eqref{eq:VEM:A} as
\begin{align}
  \msh(\Ussh{n+1},\vsh-\Ussh{n+1}) + \Delta\ts\ash(\Ussh{n+1},\vsh-\Ussh{n+1})
  \geq \msh(\Ussh{n},\vsh-\Ussh{n+1}) + \Delta\ts~(\fsh,\vsh-\Ussh{n+1}),
  \label{eq:wellposed:min:prob}
\end{align}
and introduce the same bilinear form
\begin{align*}
  \Ash( \Ussh{n+1},\vsh) = \msh(\Ussh{n+1},\vsh) + \Delta\ts\ash(\Ussh{n+1},\vsh)
\end{align*}
of the previous proof.
From the left inequalities in~\eqref{eq:msh:stability}
and~\eqref{eq:ash:stability}, it follows that $\Ash(\cdot,\cdot)$ is
coercive on $\calKh$, i.e.,
$\alpha\NORM{\vsh}{1}^2\leq\Ash(\vsh,\vsh)$ for all $\vsh\in\calKh$
with $\alpha=\min(\mu_*,\Delta\ts\alpha_*)$ (we recall that
$\calKh=\Vsh\cap\calK\subset\HSzr{1}(\Omega)$).
As in the previous proof, the continuity of $\msh(\cdot,\cdot)$ and
the Cauchy-Schwarz inequality imply that the right-hand side
of~\eqref{eq:wellposed:min:prob} is a linear continuous functional on
$\Vsh$ for any fixed $\Ussh{n}$ and $\fssh{n+1}$.
Therefore, by the Ritz Representation Theorem, there exists an element
$\bsh\in\Vsh$ such that
\begin{align*}
  \scal{\bsh}{\zsh} = \msh(\Ussh{n},\zsh) + \Delta\ts\scal{\fssh{n+1}}{\zsh}
  \quad\forall\zsh\in\Vsh.
\end{align*}
Then, we introduce the quadratic functional
\begin{align*}
  \calI(\zsh)
  = \msh(\zsh,\zsh) + \Delta\ts\ash(\zsh,\zsh) - 2\scal{\bsh}{\zsh}
  = \Ash(\zsh,\zsh) - 2\scal{\bsh}{\zsh},
\end{align*}
and we set $\ds=\underset{\zsh\in\calKh}{\inf}\calI(\zsh)$.
Using the coercivity of $\Ash(\cdot,\cdot)$, the Cauchy-Schwarz
inequality, noting that $\NORM{\cdot}{0}\leq\NORM{\cdot}{1}$, and
finally using the Young inequality with the real parameter $\alpha$,
we find that
\begin{align*}
  \calI(\zsh)
  &
  \geq  \alpha  \NORM{\zsh}{1}^2 - 2\NORM{\bsh}{0}\NORM{\zsh}{0}
  \geq  \alpha  \NORM{\zsh}{1}^2 - 2\NORM{\bsh}{1}\NORM{\zsh}{1}
  \geq  \alpha  \NORM{\zsh}{1}^2 - (1/\alpha) \NORM{\bsh}{1}^2 - \alpha \NORM{\zsh}{1}^2
  \\[0.5em]
  &= -(1/\alpha) \NORM{\bsh}{1}^2,
\end{align*}
which implies that $d \geq -(1/\alpha)\NORM{\bsh}{1}^2 > -\infty$.
Let $n$ denote a positive integer number.
Since $d=\underset{\zsh\in\calKh}{\inf}\calI(\zsh)$, for each $1/n$ we
can find a virtual element function $\zs_n\in\calKh$ such that
\begin{align}
  d \leq \calI(\zs_n) < d + 1/n.
  \label{eq:minimizing:sequence}
\end{align}
We denote the sequence of virtual element functions $\zs_n\in\calKh$
that satisfies~\eqref{eq:minimizing:sequence} for $n\to\infty$ by
$\{\zs_{n}\}_{n\geq1}$.
Now, we consider $m,n\in\mathbb{N}$, we use again the coercivity of
$\Ash(\cdot,\cdot)$
and the identity 
$4\scal{\bsh}{\zs_n}+4\scal{\bsh}{\zs_m}-8\scal{\bsh}{(\zs_n+\zs_m)/2} = 0$,
to find that
\begin{align*}
  \alpha \NORM{ \zs_n - \zs_m }{1}^2
  & \leq \Ash ( \zs_n - \zs_m, \zs_n - \zs_m )                                                          \nonumber\\[0.5em]
  & =  2 \Ash ( \zs_n,  \zs_n ) + 2\Ash ( \zs_m, \zs_m ) -4 \Ash\big( (\zs_n+\zs_m)/2, (\zs_n+\zs_m)/2 ) \nonumber\\[0.5em]
  & =  2 \calI( \zs_n )         + 2\calI( \zs_m )        -4 \calI\big( (\zs_n+\zs_m)/2 \big)            \nonumber\\[0.5em]
  & \leq 2(1/n+1/m),
\end{align*}
which implies that the minimizing sequence $\{\zs_n\}_{n\geq1}$ is a
Cauchy sequence.
Since all $\zs_n\in\calKh$ and $\calKh$ is a closed subset of $\Vsh$,
then $\calKh$ contains the limit point of $\zs_n$ for $n\to\infty.$
We denote such limit point by $\zs$, so, formally it holds that
$\zs_n\rightarrow\zs$ as $n\rightarrow\infty$.
Moreover, it holds that $\calI(\zs_n)\rightarrow\calI(\zs)$ and
condition~\eqref{eq:minimizing:sequence} implies that $\calI(\zs)=d$.
For any $\vsh\in\calKh$ and real number $\epsilon\in[0,1]$, the vertex
values of the convex combination $\zs+\epsilon(\vsh-\zs)$ must be
nonnegative, so this function also belongs to $\in\calKh$ and $\calKh$
is a convex set.
Since $\calI(\cdot)$ attains its minimum at $\zs$, it also holds that
$\calI(\zs+\epsilon(\vsh-\zs))\geq\calI(\zs)$ for any
$0\leq\epsilon\leq1$, or equivalently, that 
\begin{align}
  \frac{d}{d \epsilon} \restrict{\calI\big(\zs+\epsilon(\vsh-\zs)\big)}{\epsilon=0} \geq 0.
\end{align}
From a direct calculation, we obtain that
\begin{align*}
  \frac{d}{d \epsilon}\calI\big(\zs+\epsilon(\vsh-\zs)\big)
  = 2\epsilon\Ash(\zs,\vsh-\zs)
  +\epsilon^2\Ash(\vsh-\zs,\vsh-\zs)
  -2\epsilon\scal{\bsh}{\vsh-\zs}.
\end{align*}
Rearranging the terms and dividing by $\epsilon$ yield
\begin{align}
  \Ash(\zs,\vsh-\zs)
  \geq \scal{\bsh}{\vsh-\zs} - (\epsilon/2)\Ash(\vsh-\zs,\vsh-\zs).
  \label{eq:wellposed:min:functional}
\end{align}
Finally, we set $\epsilon=0$ in \eqref{eq:wellposed:min:functional}
and we find that $\zs$ is the solution
of~\eqref{eq:wellposed:min:prob}, and, hence, of~\eqref{eq:VEM:A} if
we identify $\Ussh{n+1}=\zs$.
\ENDPROOF
 


\section{Technical lemmas}
\label{sec:technical:lemmas}

In subsection~\ref{subsec:preliminary:lemmas}, we prove some technical
lemmas, while
in subsection~\ref{subsec:nonnegative:quasi-interpolation:operator},
we discuss the construction of the nonnegative quasi-interpolation
operator for $\HS{1}$-regular functions.
The results of these lemmas are used in the convergence analysis of
Section~\ref{sec:convergence:analysis}.

\subsection{Preliminary technical lemmas}
\label{subsec:preliminary:lemmas}

\begin{lemma}[1 - Summation by parts]
  \label{lemma:summation-by-parts:1}
  Let $\big(\Xs,\scalX{\cdot}{\cdot}\big)$ be an inner product space
  on $\REAL$, and $\{\qs^{n}\}_{n}$ a finite ordered sequence of
  elements of $\Xs$ labeled by the integer index $n=0,\ldots,\Ns$ for
  some positive integer $\Ns$.
  Then, for every $\Ms=1,\ldots,\Ns$, the following identity holds:
  \begin{align}
    2 \sum_{n=0}^{\Ms-1}\scalX{ \qss{n+1}-\qss{n} }{\qss{n+1}        }
    = \sum_{n=0}^{\Ms-1}\scalX{ \qss{n+1}-\qss{n} }{\qss{n+1}-\qss{n}}
    + \scalX{ \qss{\Ms} }{ \qss{\Ms} } - \scalX{ \qss{0} }{ \qss{0} }.
    \label{eq:summation-by-parts:1}
  \end{align}
\end{lemma}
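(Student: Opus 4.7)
The plan is to establish the identity by reducing it to a simple algebraic identity in the inner product that is applied termwise and then telescoped.

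First, I would isolate the pointwise (per-$n$) identity
\[
2\scalX{\qss{n+1}-\qss{n}}{\qss{n+1}} = \scalX{\qss{n+1}-\qss{n}}{\qss{n+1}-\qss{n}} + \scalX{\qss{n+1}}{\qss{n+1}} - \scalX{\qss{n}}{\qss{n}}.
\]
This is a consequence of the bilinearity and symmetry of $\scalX{\cdot}{\cdot}$: expanding $\scalX{\qss{n+1}-\qss{n}}{\qss{n+1}-\qss{n}} = \scalX{\qss{n+1}}{\qss{n+1}} - 2\scalX{\qss{n+1}}{\qss{n}} + \scalX{\qss{n}}{\qss{n}}$ and rearranging gives the claim. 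Equivalently, one writes $\qss{n+1} = \tfrac{1}{2}\bigl((\qss{n+1}-\qss{n}) + (\qss{n+1}+\qss{n})\bigr)$ and uses that $\scalX{\qss{n+1}-\qss{n}}{\qss{n+1}+\qss{n}} = \scalX{\qss{n+1}}{\qss{n+1}} - \scalX{\qss{n}}{\qss{n}}$.

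Next, I would sum the pointwise identity over $n=0,\ldots,\Ms-1$. The first two sums pass through unchanged, while the third produces a telescoping sum
\[
\sum_{n=0}^{\Ms-1}\Bigl(\scalX{\qss{n+1}}{\qss{n+1}} - \scalX{\qss{n}}{\qss{n}}\Bigr) = \scalX{\qss{\Ms}}{\qss{\Ms}} - \scalX{\qss{0}}{\qss{0}},
\]
which is exactly the last two terms of~\eqref{eq:summation-by-parts:1}.

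There is no real obstacle here: the identity is purely algebraic and relies only on the bilinearity and symmetry of the inner product, with the finite sum being absolutely convergent since it has finitely many terms. The only small care needed is the bookkeeping of indices in the telescoping sum to make sure the boundary contributions $\scalX{\qss{\Ms}}{\qss{\Ms}}$ and $-\scalX{\qss{0}}{\qss{0}}$ appear with the correct signs, which matches the statement.
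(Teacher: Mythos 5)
Your proof is correct and follows essentially the same route as the paper: both establish the per-index identity $2\scalX{\qss{n+1}-\qss{n}}{\qss{n+1}} = \scalX{\qss{n+1}-\qss{n}}{\qss{n+1}-\qss{n}} + \scalX{\qss{n+1}}{\qss{n+1}} - \scalX{\qss{n}}{\qss{n}}$ by expanding the squared difference, then sum over $n$ and telescope. No issues.
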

\begin{proof}
  First, we note that
  \begin{align*}
    & \scalX{ \qss{n+1}-\qss{n}}{ \qss{n+1}-\qss{n} }
    = \scalX{ \qss{n+1}        }{ \qss{n+1}         }
    + \scalX{ \qss{n} }{ \qss{n}   }
    -2\scalX{ \qss{n} }{ \qss{n+1} }
    \nonumber\\[0.5em]
    &\qquad
    =2\scalX{ \qss{n+1} }{ \qss{n+1} }
    - \scalX{ \qss{n+1} }{ \qss{n+1} }
    + \scalX{ \qss{n}   }{ \qss{n}   }
    -2\scalX{ \qss{n}   }{ \qss{n+1} }
    \nonumber\\[0.5em]
    &\qquad
    =2\scalX{ \qss{n+1}-\qss{n} }{ \qss{n+1} }
    - \scalX{ \qss{n+1}         }{ \qss{n+1} }
    + \scalX{ \qss{n}           }{ \qss{n}   }.
  \end{align*}
  Then, we add all the terms for $n=0,\ldots,\Ms-1$ to obtain:
  \begin{align*}
    \sum_{n=0}^{\Ms-1}\scalX{ \qss{n+1}-\qss{n} }{ \qss{n+1}-\qss{n} }
    = 2\sum_{n=0}^{\Ms-1}\scalX{ \qss{n+1}-\qss{n} }{ \qss{n+1} }
    -  \sum_{n=0}^{\Ms-1}\Big( \scalX{ \qss{n+1} }{ \qss{n+1} }
    - \scalX{ \qss{n} }{ \qss{n} } \Big).
  \end{align*}
  The assertion of the lemma follows by noting that the second term on
  the right is the telescopic sum
  \begin{align*}
    \sum_{n=0}^{\Ms-1}\Big( \scalX{ \qss{n+1} }{ \qss{n+1} }
    - \scalX{ \qss{n}   }{ \qss{n}   } \Big)
    = \scalX{ \qss{\Ms} }{ \qss{\Ms} }
    - \scalX{ \qss{0}   }{ \qss{0}   },
  \end{align*}
  and rearranging the terms of the resulting identity.
  \ENDPROOF
\end{proof}

\begin{lemma}[2 - Summation by parts]
  \label{lemma:summation-by-parts:2}
  Let $\big(\Xs,\scalX{\cdot}{\cdot}\big)$ be an inner product space
  on $\REAL$, and $\{\qs^{n}\}_{n}$ and $\{\ps^{n}\}_{n}$ be two
  finite ordered sequences of elements of $\Xs$ labeled by the integer
  index $n=0,\ldots,\Ns$ for some positive integer $\Ns$.
  Then, it holds that
  \begin{align}
    \label{eq:summation-by-parts:2}
    \sum_{n=0}^{\Ns-1}\scalX{\qs^{n+1}-\qs^{n}}{\ps^{n+1}}
    = -\sum_{n=0}^{\Ns-1}\scalX{\qs^{n}}{\ps^{n+1}-\ps^{n}}
    + \scalX{\qs^{N}}{\ps^{N}} - \scalX{\qs^{0}}{\ps^{0}}.
  \end{align}
\end{lemma}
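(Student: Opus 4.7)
The plan is to proceed analogously to the proof of Lemma~\ref{lemma:summation-by-parts:1}, exploiting the bilinearity of $\scalX{\cdot}{\cdot}$ to split each summand into pieces that telescope cleanly and generate the boundary contributions $\scalX{\qs^{\Ns}}{\ps^{\Ns}}$ and $\scalX{\qs^{0}}{\ps^{0}}$, while the remainder leaves the expected discrete derivative on the second argument $\ps^{n+1}-\ps^{n}$.

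First, I would expand each summand by bilinearity in the first argument,
\begin{align*}
  \scalX{\qs^{n+1}-\qs^{n}}{\ps^{n+1}}
  = \scalX{\qs^{n+1}}{\ps^{n+1}} - \scalX{\qs^{n}}{\ps^{n+1}},
\end{align*}
and then insert $\pm\ps^{n}$ into the right entry of the second term to bring in the forward difference $\ps^{n+1}-\ps^{n}$ that appears on the right-hand side of~\eqref{eq:summation-by-parts:2}:
\begin{align*}
  -\scalX{\qs^{n}}{\ps^{n+1}}
  = -\scalX{\qs^{n}}{\ps^{n+1}-\ps^{n}} - \scalX{\qs^{n}}{\ps^{n}}.
\end{align*}
Combining these two identities yields the elementary identity
\begin{align*}
  \scalX{\qs^{n+1}-\qs^{n}}{\ps^{n+1}}
  = \bigl(\scalX{\qs^{n+1}}{\ps^{n+1}} - \scalX{\qs^{n}}{\ps^{n}}\bigr)
  - \scalX{\qs^{n}}{\ps^{n+1}-\ps^{n}}.
\end{align*}

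Next, I would sum this identity from $n=0$ to $n=\Ns-1$. The first parenthesized block is a telescoping sum that collapses to $\scalX{\qs^{\Ns}}{\ps^{\Ns}} - \scalX{\qs^{0}}{\ps^{0}}$, while the remaining contribution reproduces the summation $-\sum_{n=0}^{\Ns-1}\scalX{\qs^{n}}{\ps^{n+1}-\ps^{n}}$. A straightforward rearrangement then delivers~\eqref{eq:summation-by-parts:2}.

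The argument is completely routine and presents no real obstacle beyond careful index bookkeeping to ensure that the telescoping cancellation ultimately evaluates the boundary contributions at $n=\Ns$ and $n=0$ rather than at intermediate indices. In essence, the identity is a faithful discrete analogue of the integration-by-parts formula $\int_0^T q'\,p\,dt = \bigl[qp\bigr]_{0}^{T} - \int_0^T q\,p'\,dt$, with $\qs^{n+1}-\qs^{n}$ playing the role of $q'\,dt$ and $\ps^{n+1}-\ps^{n}$ the role of $p'\,dt$.
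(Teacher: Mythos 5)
Your proof is correct and follows essentially the same route as the paper: the elementary identity you derive, $\scalX{\qs^{n+1}-\qs^{n}}{\ps^{n+1}} + \scalX{\qs^{n}}{\ps^{n+1}-\ps^{n}} = \scalX{\qs^{n+1}}{\ps^{n+1}} - \scalX{\qs^{n}}{\ps^{n}}$, is precisely the one the paper sums and telescopes. The only difference is presentational (you arrive at it by inserting $\pm\ps^{n}$ rather than stating it directly), so there is nothing to add.
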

\begin{proof}
  We note that
  \begin{align*}
    \scalX{\qs^{n+1}-\qs^{n}}{\ps^{n+1}} + \scalX{\qs^{n}}{\ps^{n+1}-\ps^{n}}
    = \scalX{\qs^{n+1}}{\ps^{n+1}} - \scalX{\qs^{n}}{\ps^{n}}.
  \end{align*}
  Then, we add both sides for $n=0$ to $\Ns-1$ and note that the
  right-hand side is a telescopic sum.
\end{proof}

\begin{lemma}
  \label{lemma:infty:bound}
  Let $\big(\Xs,\NORM{\,\cdot\,}{\Xs}\big)$ be a normed space on
  $\REAL$.
  Consider a function $\qs\in\LS{\infty}(0,T;\Xs)\cap\CS{}(0,T;\Xs)$
  and a finite ordered sequence $\{\qs^n\}_{n}$ of discrete values of
  $\qs(t)\in\Xs$ taken at successive instants $\ts^{n}$,
  $n=0,\ldots,\Ns$, e.g., $\qs^n=\qs(\ts^n)$.
  Let $\Delta\ts^n=\ts^{n+1}-\ts^{n}$ be the size of the $(n+1)$-th
  time interval $\big[\ts^{n},\ts^{n+1}\big]$ and note that
  $\Ts=\sum_{n=0}^{\Ns-1}\Delta\ts^{n}$.
  Then, it holds that
  \begin{align*}
    \sum_{n=0}^{\Ns-1}\Delta\ts^n\NORM{\qs^n}{\Xs} \leq \Ts\NORM{\qs}{\LS{\infty}(0,T;\Xs)}.
  \end{align*}
\end{lemma}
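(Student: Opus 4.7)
The plan is to argue pointwise for each term of the sum and then combine via the definition of the time partition. The only subtlety is that $\LS{\infty}(0,T;\Xs)$ is defined through an essential supremum, whereas $\qs^{n}=\qs(\ts^n)$ is a pointwise evaluation at a single instant; the continuity hypothesis $\qs\in\CS{}(0,T;\Xs)$ is exactly what reconciles the two notions.

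First I would invoke $\qs\in\CS{}(0,T;\Xs)$ to conclude that the essential supremum of $\ts\mapsto\NORM{\qs(\ts)}{\Xs}$ on $[0,T]$ coincides with its supremum, so that every pointwise value is controlled:
\begin{align*}
  \NORM{\qs^n}{\Xs} = \NORM{\qs(\ts^n)}{\Xs} \leq \esssup_{\ts\in[0,T]}\NORM{\qs(\ts)}{\Xs} = \NORM{\qs}{\LS{\infty}(0,T;\Xs)},
\end{align*}
for every $n=0,\ldots,\Ns-1$. This is a uniform-in-$n$ bound independent of the particular grid chosen.

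Next I would multiply by the nonnegative weight $\Delta\ts^n$ and sum for $n=0,\ldots,\Ns-1$, obtaining
\begin{align*}
  \sum_{n=0}^{\Ns-1}\Delta\ts^n\NORM{\qs^n}{\Xs} \leq \NORM{\qs}{\LS{\infty}(0,T;\Xs)}\sum_{n=0}^{\Ns-1}\Delta\ts^n = \Ts\,\NORM{\qs}{\LS{\infty}(0,T;\Xs)},
\end{align*}
where in the last equality I would use the stated identity $\Ts=\sum_{n=0}^{\Ns-1}\Delta\ts^n$ expressing the fact that the subintervals form a partition of $[0,\Ts]$.

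There is no real obstacle here; the statement is essentially an upper Riemann-sum bound for the function $\ts\mapsto\NORM{\qs(\ts)}{\Xs}$. The only point worth stressing in the write-up is why the $\LS{\infty}$ norm (an essential supremum) dominates pointwise values, which is exactly where the assumption $\qs\in\CS{}(0,T;\Xs)$ enters.
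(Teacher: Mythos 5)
Your proof is correct and follows essentially the same route as the paper's: bound the pointwise values $\NORM{\qs^n}{\Xs}$ by $\NORM{\qs}{\LS{\infty}(0,T;\Xs)}$ (the paper factors out $\max_n\NORM{\qs^n}{\Xs}$ first and then passes to the essential supremum, which is the same two steps in the other order) and sum the weights $\Delta\ts^n$ to get $\Ts$. Your explicit remark that the continuity hypothesis is what lets a pointwise evaluation be dominated by an essential supremum is a point the paper leaves implicit, so it is a welcome clarification rather than a deviation.
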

\begin{proof}
  The following chain of inequalities holds
  \begin{align*}
    \sum_{n=0}^{\Ns-1}\Delta\ts^{n}\NORM{\qs^n}{\Xs}
    \leq \max_{0\leq\ns\leq\Ns-1}\NORM{\qs^n}{\Xs}\sum_{n=0}^{\Ns-1}\Delta\ts^{n}
    \leq \Ts\text{ess~sup}_{\ts\in[0,\Ts]}\NORM{\qs(\ts)}{\Xs}
    =    \Ts\NORM{\qs}{\LS{\infty}(0,T;\Xs)},
  \end{align*}
  which is the assertion of the lemma.
  \ENDPROOF
\end{proof}

\subsection{Nonnegative quasi-interpolation operator}
\label{subsec:nonnegative:quasi-interpolation:operator}
According to~\eqref{eq:regularity:B}, the time derivative of the exact
solution $\partial\us\slash{\partial\ts}$ is only in $\HS{1}(\Omega)$,
so we cannot use the interpolation operator of
Lemma~\ref{lemma:interpolation}, which assumes the $\HS{2}$
regularity.
So, in this section we discuss the construction of a quasi-interpolant
operator for $\HS{1}$-regular functions that satisfies the condition
that the interpolation is nonnegative in $\Omega$ if the function to
be interpolated is nonnegative almost everywhere in $\Omega$.
\RED{ For this construction, we proceed as in~\cite{Johnson:1976},
  although an alternative proof is possible by following the
  guidelines depicted in Reference~\cite{Mora-Rivera-Rodriguez:2015},
  which we report in the final appendix.
For the construction of the quasi-interpolant operator, we first}
increase the regularity of the function that must be interpolated
through a smoothness operator, and, then, we apply the standard
virtual element interpolation to the smoothed function.
This strategy is detailed by the two lemmas from~\cite{Johnson:1976}
that we report below omitting the proof as it is the same and
referring the interested reader to the original publication.
The generalization to the virtual element setting is immediate since
the proof of Lemma~\ref{lemma:quasi-interpolation:A}
in~\cite{Johnson:1976} is actually independent of the way the domain
is partitioned and the same argument works for triangular and
polygonal meshes.
Then, in Lemma~\ref{lemma:quasi-interpolation:B}, we can apply the
virtual element interpolation operator of
Lemma~\ref{lemma:interpolation}.
The resulting quasi-interpolation operator has optimal approximation
property and, thanks to Lemma~\ref{lemma:VEM:nonnegative-subset}, has
the desidered nonnegativity property.
We slightly modified the statement of the lemmas to adapt them to our
notation and assumptions.

\begin{lemma}
  \label{lemma:quasi-interpolation:A} 
  For every mesh $\Th$ satisfying
  assumptions~\ASSUM{M}{1}-\ASSUM{M}{2}, there is a linear operator
  $\Ssh:\HSzr{1}(\Omega)\to\HS{2}(\Omega)\cap\HSzr{1}(\Omega)$, such
  that
  \begin{itemize}
  \item[$(i)$]   $\NORM{\Ssh\vs}{k}\leq\Cs\hh^{-j}\NORM{\vs}{k-j}$;
  \item[$(ii)$]  $\NORM{\vs-\Ssh\vs}{j}\leq\Cs\hh^{k-j}\NORM{\vs}{k}$, $j=0,1$, $k=1,2$;
  \item[$(iii)$] $\Ssh\vs\geq0$ on $\Omega$ if $\vs\geq0$ a.e. on $\Omega$.
  \end{itemize}
\end{lemma}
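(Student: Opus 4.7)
The plan is to construct $\Ssh$ as a composition of a small contraction (to maintain homogeneous boundary values) followed by convolution with a nonnegative mollifier of scale $\hh$. Let $\rho\in\CS{\infty}(\REAL^{2})$ be a nonnegative, radially symmetric function with support in the unit ball and $\int_{\REAL^{2}}\rho\,d\xv=1$, and set $\rho_{\hh}(\xv)=\hh^{-2}\rho(\xv\slash\hh)$. Any $\vs\in\HSzr{1}(\Omega)$ extended by zero to $\REAL^{2}$ lies in $\HS{1}(\REAL^{2})$ with $\NORM{\TILDE{\vs}}{1,\REAL^{2}}\leq\NORM{\vs}{1,\Omega}$. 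To keep the image inside $\Omega$ with zero boundary trace, I would first apply a piecewise smooth contraction $\Phi_{\hh}:\overline{\Omega}\to\overline{\Omega}$ that fixes an interior reference point and pushes every boundary point inward by a distance proportional to $\hh$ (for a polygonal $\Omega$, such a map can be built by local radial scaling near each vertex and a uniform inward translation along each edge normal, glued by a partition of unity). Setting
\begin{align*}
  \Ssh\vs(\xv) = \big(\rho_{\hh}\ast\TILDE{\vs\circ\Phi_{\hh}^{-1}}\big)(\xv),
\end{align*}
the convolution is smooth, so $\Ssh\vs\in\CS{\infty}(\REAL^{2})$; and because $\Phi_{\hh}$ contracts $\Omega$ into a subset at distance $\geq\Cs\hh$ from $\partial\Omega$, the support of $\TILDE{\vs\circ\Phi_{\hh}^{-1}}$ is at distance $\geq\Cs\hh$ from $\partial\Omega$, so for $\hh$ small enough, the convolution vanishes in a neighborhood of $\partial\Omega$, giving $\Ssh\vs\in\HS{2}(\Omega)\cap\HSzr{1}(\Omega)$.

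For property $(i)$, differentiation of a convolution gives $D^{\alpha}(\rho_{\hh}\ast\TILDE{\ws})=(D^{\alpha}\rho_{\hh})\ast\TILDE{\ws}$, and a scaling argument yields $\NORM{D^{\alpha}\rho_{\hh}}{\LS{1}(\REAL^{2})}\leq\Cs\hh^{-\ABS{\alpha}}$; combining with Young's inequality $\NORM{\rho_{\hh}\ast\TILDE{\ws}}{\LS{2}}\leq\NORM{\rho_{\hh}}{\LS{1}}\NORM{\TILDE{\ws}}{\LS{2}}$ and the boundedness of the pullback $\ws\mapsto\ws\circ\Phi_{\hh}^{-1}$ in every Sobolev norm uniformly in $\hh$ (since $\Phi_{\hh}\to\operatorname{id}$ with bounded Jacobians) gives the inverse-type estimate $\NORM{\Ssh\vs}{k}\leq\Cs\hh^{-j}\NORM{\vs}{k-j}$. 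For property $(ii)$, the standard convolution approximation estimate $\NORM{\ws-\rho_{\hh}\ast\TILDE{\ws}}{j}\leq\Cs\hh^{k-j}\NORM{\ws}{k}$ for $k\geq j$, combined with the Lipschitz estimate $\NORM{\vs-\vs\circ\Phi_{\hh}^{-1}}{j}\leq\Cs\hh^{k-j}\NORM{\vs}{k}$ (which follows because $\Phi_{\hh}^{-1}-\operatorname{id}$ is of size $\Cs\hh$ in $\WS{1,\infty}$), yields the desired rate via the triangle inequality. Finally, property $(iii)$ is immediate: if $\vs\geq0$ a.e. in $\Omega$, then $\TILDE{\vs\circ\Phi_{\hh}^{-1}}\geq0$ a.e. in $\REAL^{2}$, and since $\rho_{\hh}\geq0$, the convolution is pointwise nonnegative.

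The main obstacle is the construction and verification of the inward contraction $\Phi_{\hh}$ on a polygonal (not smooth) domain, because the naive radial dilation about a fixed center does not keep points inside nonconvex polygons and can distort Sobolev norms nonuniformly near corners. The resolution is the partition-of-unity construction sketched above, for which the Jacobian $\det(D\Phi_{\hh})$ and its inverse are bounded uniformly in $\hh$, so $\Phi_{\hh}$ and $\Phi_{\hh}^{-1}$ induce uniformly bounded operators between $\HS{k}$ spaces. Since the proof of the analogous result in~\cite{Johnson:1976} is independent of whether the underlying mesh is triangular or polygonal (it uses only regularity of $\Omega$ and properties of $\rho_{\hh}$), the same arguments apply verbatim in our setting, and we refer the reader to that reference for the detailed calculations.
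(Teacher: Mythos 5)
The paper proves this lemma purely by citation to~\cite[Lemma~1]{Johnson:1976}, so there is no detailed argument to compare against; judged on its own terms, however, your explicit contract-and-mollify construction has a genuine gap: it cannot deliver the second-order cases of property $(ii)$. The obstruction is structural. Your operator forces $\Ssh\vs$ to vanish identically on an $O(\hh)$-neighbourhood of $\partial\Omega$ (this is exactly how you secure the zero trace). For any $\vs\in\HS{2}(\Omega)\cap\HSzr{1}(\Omega)$ with nonvanishing normal derivative on $\partial\Omega$ --- already in one dimension, $\Omega=(0,1)$ and $\vs(x)=x(1-x)$ --- one has $\SNORM{\vs}{\HS{1}((0,c\hh))}\sim\hh^{1/2}$ and $\NORM{\vs}{\LS{2}((0,c\hh))}\sim\hh^{3/2}$, so \emph{any} operator that annihilates $\vs$ on such a strip satisfies $\snorm{\vs-\Ssh\vs}{1}\gtrsim\hh^{1/2}$ and $\norm{\vs-\Ssh\vs}{0}\gtrsim\hh^{3/2}$, whereas $(ii)$ with $k=2$ demands rates $\hh$ and $\hh^2$. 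The case $(j,k)=(1,2)$ is precisely the one consumed downstream (e.g.\ in the bound $\snorm{\etas{n+1}}{1}\leq\Cs\hh\snorm{\uss{n+1}}{2}$ via Lemma~\ref{lemma:quasi-interpolation:B}), so this is not a removable corner of the statement. Relatedly, your intermediate claim $\norm{\vs-\vs\circ\Phi_{\hh}^{-1}}{j}\leq\Cs\hh^{k-j}\norm{\vs}{k}$ is false for $k-j=2$: perturbing the argument of $\vs$ by $O(\hh)$ changes $\vs$ by $\nabla\vs\cdot(\Phi_{\hh}^{-1}-\mathrm{id})+O(\hh^2)$, which is genuinely first order; no extra power of $\hh$ appears. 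The zero extension of $\vs\circ\Phi_{\hh}^{-1}$ is also not in $\HS{2}(\REAL^2)$ (the normal derivative jumps across $\partial\Phi_{\hh}(\Omega)$), so the ``standard'' second-order mollification estimate you invoke is not available either.

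The difficulty is intrinsic to the approach rather than a technicality about $\Phi_{\hh}$ on nonconvex polygons: mollification with odd reflection would restore second-order accuracy and the homogeneous trace but destroys nonnegativity, while contraction followed by mollification preserves nonnegativity and the trace but is only low-order accurate in the boundary layer. A construction meeting all three requirements must allow $\Ssh\vs$ to remain nonzero up to $\partial\Omega$; for instance the elliptic regularization $\Ssh\vs=(I-\hh^2\Delta)^{-1}\vs$ with homogeneous Dirichlet data gives $(iii)$ by the maximum principle, $(ii)$ from the resolvent identity $\vs-\Ssh\vs=-\hh^2\Delta\Ssh\vs$ together with energy estimates, and $(i)$ by testing the equation with $-\Delta\Ssh\vs$ (all modulo $\HS{2}$ elliptic regularity of the Dirichlet Laplacian on $\Omega$, which is in any case needed for the codomain $\HS{2}(\Omega)\cap\HSzr{1}(\Omega)$ to be reachable). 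Your closing appeal to~\cite{Johnson:1976} is consistent with what the paper itself does, but the construction you actually write down in the body of the argument is not one for which the stated estimates hold, so it cannot stand in for the cited proof.
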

\begin{proof}
  See~\cite[Lemma~1]{Johnson:1976}.
\end{proof}
\begin{lemma}
  \label{lemma:quasi-interpolation:B}
  Let $\Th$ be a mesh partitionings of the computational domain
  $\Omega$ satisfying assumptions~\ASSUM{M}{1}-\ASSUM{M}{2}.
  For all $\vs\in\HSzr{1}(\Omega)$, let
  $\Ih\vs=\big(\Ssh\vs\big)_{\INTP}$ be the function in $\Vsh$ that
  interpolates $\Ssh\vs$ at the vertices of $\Th$.
  Then,
  \begin{itemize}
  \item[$(i)$]  $\NORM{\vs-\Ih\vs}{j}\leq\Cs\hh^{k-j}\NORM{\vs}{k}$, $j=0,1$, $k=1,2$
  \item[$(ii)$] $\Ih\vs\in\calKh$ if $\vs\in\calK$.
  \end{itemize}
\end{lemma}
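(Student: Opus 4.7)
The plan is to combine the smoothing operator $\Ssh$ from Lemma~\ref{lemma:quasi-interpolation:A} with the nodal virtual element interpolation of Lemma~\ref{lemma:interpolation} through a triangle-inequality decomposition. Since $\Ih\vs = (\Ssh\vs)_{\INTP}$ by definition, I would write
\[
\vs - \Ih\vs = (\vs - \Ssh\vs) + \big(\Ssh\vs - (\Ssh\vs)_{\INTP}\big),
\]
and bound each summand with a complementary tool: the smoothing error estimate $(ii)$ of Lemma~\ref{lemma:quasi-interpolation:A}, and the elementwise virtual interpolation estimate of Lemma~\ref{lemma:interpolation}, which is applicable because $\Ssh\vs\in\HS{2}(\Omega)\cap\HSzr{1}(\Omega)$.

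For part $(i)$, the first summand contributes $\NORM{\vs-\Ssh\vs}{j}\leq\Cs\hh^{k-j}\NORM{\vs}{k}$ directly from property $(ii)$ of Lemma~\ref{lemma:quasi-interpolation:A}. For the second summand, summing the squared elementwise estimates from Lemma~\ref{lemma:interpolation} and using $\hP\leq\hh$ gives $\NORM{\Ssh\vs-(\Ssh\vs)_{\INTP}}{j}\leq\Cs\hh^{2-j}\snorm{\Ssh\vs}{2}$ for $j=0,1$. The factor $\snorm{\Ssh\vs}{2}$ is then controlled via property $(i)$ of Lemma~\ref{lemma:quasi-interpolation:A}: when $\vs\in\HS{2}(\Omega)$ (case $k=2$) the stability bound yields $\snorm{\Ssh\vs}{2}\leq\Cs\NORM{\vs}{2}$, while when only $\vs\in\HSzr{1}(\Omega)$ (case $k=1$) the inverse-type estimate $\snorm{\Ssh\vs}{2}\leq\Cs\hh^{-1}\NORM{\vs}{1}$ is used, and the missing power of $\hh$ is absorbed by the $\hh^{2-j}$ factor coming from the interpolation error. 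In each of the four $(j,k)$ combinations the two contributions sum to the desired bound $\Cs\hh^{k-j}\NORM{\vs}{k}$.

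For part $(ii)$, if $\vs\in\calK$ so that $\vs\geq 0$ almost everywhere in $\Omega$, property $(iii)$ of Lemma~\ref{lemma:quasi-interpolation:A} guarantees $\Ssh\vs\geq 0$ pointwise in $\Omega$. In particular, the values of $\Ssh\vs$ at the vertices of $\Th$ are nonnegative, and these values are exactly the degrees of freedom of $\Ih\vs=(\Ssh\vs)_{\INTP}\in\Vsh$. Corollary~\ref{lemma:VEM:nonnegative-subset:global} then immediately implies $\Ih\vs\geq 0$ throughout $\Omega$, hence $\Ih\vs\in\calKh$. The only delicate bookkeeping is the case $k=1$ in $(i)$: since $\vs$ lacks $\HS{2}$ regularity, the expected $\hh^{2-j}$ decay of the nodal interpolation must be degraded to $\hh^{1-j}$ by paying one inverse power of $\hh$ through the smoothing step, which explains why the three properties of Lemma~\ref{lemma:quasi-interpolation:A} are coupled as they are.
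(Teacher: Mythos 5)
Your proof is correct and follows essentially the same route as the paper, which defers the details to \cite[Lemma~2]{Johnson:1976} but describes exactly this construction: smooth with $\Ssh$, interpolate with the operator of Lemma~\ref{lemma:interpolation}, split the error by the triangle inequality, and trade the inverse power of $\hh$ from Lemma~\ref{lemma:quasi-interpolation:A}$(i)$ against the $\hh^{2-j}$ interpolation gain in the low-regularity case $k=1$. Your argument for part $(ii)$ via Lemma~\ref{lemma:quasi-interpolation:A}$(iii)$ and Corollary~\ref{lemma:VEM:nonnegative-subset:global} is likewise the one the paper records in its remark on the nonnegativity of $\Ih$.
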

\begin{proof}
  See~\cite[Lemma~2]{Johnson:1976}.
\end{proof}

\begin{remark}[Nonnegativity of $\Ih$]
  The nonnegativity of the quasi-interpolation operator $\Ih$ follows
  from Lemma~\ref{lemma:VEM:nonnegative-subset} and $(iii)$ of
  Lemma~\ref{lemma:quasi-interpolation:A}.
  In fact, if $\vs\in\HSzr{1}(\Omega)$ is almost everywhere
  nonnegative, then the smoothed function $\Ssh\vs$ is nonnegative on
  $\Omega$, and its quasi-interpolant $\Ih\vs$ belongs to
  $\calKh$.
\end{remark}

The following lemmas provides two approximation results about the
quasi-interpolation operator that will be used in the convergence
analysis of Section~\ref{sec:convergence:analysis}.

\begin{lemma}
  \label{lemma:L2:estimate:u}
  There exists a real, positive constant $\Cs$ independent of $\hh$
  and $\Delta\ts$, such that for all
  $\vs\in\LS{2}\big(0,T;\HS{1}(\Omega)\big)$ it holds that
  \begin{align}
    \Delta\ts\sum_{n=0}^{\Ns-1}\NORM{\vs(\ts^{n+1})-\Ih\vs(\ts^{n+1})}{0}^2 \leq
    \Cs\hh^2\NORM{\vs}{\LS{2}\big(0,T;\HS{1}(\Omega)\big)}^2,
    \label{eq:L2:estimate:u}
  \end{align}
  where, for $\ts^{n+1}\in[0,T]$, the function $\Ih\vs(t^{n+1})$ is
  the quasi-interpolant of $\vs(t^{n+1})$ defined through the
  construction of
  Lemmas~\ref{lemma:quasi-interpolation:A}-\ref{lemma:quasi-interpolation:B}
  on a mesh $\Th$ satisfying assumptions~\ASSUM{M}{1}-\ASSUM{M}{2}.
\end{lemma}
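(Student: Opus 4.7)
\medskip
\noindent
\textbf{Proof plan.} The strategy is to apply the pointwise quasi-interpolation estimate of Lemma~\ref{lemma:quasi-interpolation:B}, part $(i)$, to each evaluation $\vs(\ts^{n+1})$ in the sum, and then identify the resulting discrete sum with (an upper bound of) the Bochner norm on the right-hand side. Specifically, with $j=0$ and $k=1$, Lemma~\ref{lemma:quasi-interpolation:B} yields
\begin{align*}
  \NORM{\vs(\ts^{n+1})-\Ih\vs(\ts^{n+1})}{0}^2 \leq \Cs\hh^2\NORM{\vs(\ts^{n+1})}{1}^2,
\end{align*}
which, after multiplying by $\Delta\ts$ and summing over $n=0,\ldots,\Ns-1$, reduces the proof to controlling the discrete quantity $\Delta\ts\sum_{n=0}^{\Ns-1}\NORM{\vs(\ts^{n+1})}{1}^2$ by $\NORM{\vs}{\LS{2}(0,T;\HS{1}(\Omega))}^2$.

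The remaining task is to bound this discrete sum by the Bochner norm. A clean way is to use the absolute continuity in time that the pertinent choice of $\vs$ possesses (recall that the lemma will be applied to functions such as $\us$ or $\partial\us\slash\partial\ts$, which enjoy extra regularity via \ASSUM{A}{1}--\ASSUM{A}{3}). Writing
\begin{align*}
  \NORM{\vs(\ts^{n+1})}{1} \leq \NORM{\vs(\ts)}{1} + \int_{\ts}^{\ts^{n+1}} \NORM{\partial_{\tau}\vs(\tau)}{1}d\tau
  \quad \text{for a.e. } \ts\in[\ts^{n},\ts^{n+1}],
\end{align*}
squaring with the elementary inequality $(a+b)^2\leq 2a^2+2b^2$, and integrating over $[\ts^{n},\ts^{n+1}]$ produces the bound
\begin{align*}
  \Delta\ts\,\NORM{\vs(\ts^{n+1})}{1}^2 \leq 2\int_{\ts^{n}}^{\ts^{n+1}}\NORM{\vs(\ts)}{1}^2\dt + 2(\Delta\ts)^2\int_{\ts^{n}}^{\ts^{n+1}}\NORM{\partial_{\tau}\vs(\tau)}{1}^2 d\tau.
\end{align*}
Summing over $n$ telescopes the first term to the Bochner norm of $\vs$, while the second term contributes a $(\Delta\ts)^2$-factor that can be absorbed into the constant $\Cs$; alternatively, if the function is continuous in time (which is the relevant case here in view of the regularity of $\us$), one may directly invoke the standard comparison between the Riemann sum and the integral.

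The main obstacle, and the reason the previous passage requires some care, is the following: a bare $\LS{2}(0,T;\HS{1}(\Omega))$ assumption does not give meaning to pointwise-in-time values $\vs(\ts^{n+1})$, so the identification of the discrete sum with the continuous Bochner norm must rely on some minimal continuity in time. In the concrete application of this lemma inside Section~\ref{sec:convergence:analysis}, the function to be interpolated will always enjoy at least the regularity stated in \eqref{eq:regularity:A}--\eqref{eq:regularity:B}, so that pointwise evaluations are legitimate and the argument sketched above closes the proof. All constants involved are independent of $\hh$ and $\Delta\ts$ because the pointwise estimate of Lemma~\ref{lemma:quasi-interpolation:B} is, and the time-integration steps introduce only the factor $T$.
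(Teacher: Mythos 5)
Your first step coincides exactly with the paper's: apply Lemma~\ref{lemma:quasi-interpolation:B}$(i)$ with $j=0$, $k=1$ at each time level and reduce the claim to bounding $\Delta\ts\sum_{n}\NORM{\vs(\ts^{n+1})}{1}^2$ by the Bochner norm. Where you diverge is in the second half. The paper's entire argument there is the phrase ``the norm definition in the Bochner space,'' i.e.\ it silently identifies the Riemann sum $\Delta\ts\sum_{n}\NORM{\vs(\ts^{n+1})}{1}^2$ with $\NORM{\vs}{\LS{2}(0,T;\HS{1}(\Omega))}^2$. You correctly observe that this identification is not legitimate for a bare $\vs\in\LS{2}\big(0,T;\HS{1}(\Omega)\big)$ --- pointwise-in-time values are not even defined, and a Riemann sum need not be dominated by the integral --- and you repair it by assuming absolute continuity in time, writing $\NORM{\vs(\ts^{n+1})}{1}\leq\NORM{\vs(\ts)}{1}+\int_{\ts}^{\ts^{n+1}}\NORM{\partial_\tau\vs(\tau)}{1}\,d\tau$, squaring, integrating over $[\ts^n,\ts^{n+1}]$ and summing; this telescopes to $2\NORM{\vs}{\LS{2}(0,T;\HS{1}(\Omega))}^2$ plus a remainder $2(\Delta\ts)^2\NORM{\partial_\ts\vs}{\LS{2}(0,T;\HS{1}(\Omega))}^2$. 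This is more careful than the paper's one-line proof and is valid for the functions ($\us$ and $\partial\us/\partial\ts$) to which the lemma is actually applied. Two caveats. First, the remainder cannot simply be ``absorbed into the constant $\Cs$'': $\Cs$ must be independent of $\vs$, and the remainder is controlled by $\NORM{\partial_\ts\vs}{\LS{2}(0,T;\HS{1}(\Omega))}$ rather than by $\NORM{\vs}{\LS{2}(0,T;\HS{1}(\Omega))}$, so you should either carry it as an explicit higher-order term (harmless in Theorem~\ref{theorem:convergence}, where $\hh^2\Delta\ts^2$ is dominated by $\hh^2$) or strengthen the stated hypothesis accordingly. Second, the paper's own toolbox suggests a cleaner fix that avoids differentiating in time: assume $\vs\in\LS{\infty}\big(0,T;\HS{1}(\Omega)\big)\cap\CS{}\big(0,T;\HS{1}(\Omega)\big)$ and invoke Lemma~\ref{lemma:infty:bound} to get $\Delta\ts\sum_n\NORM{\vs(\ts^{n+1})}{1}^2\leq\Ts\NORM{\vs}{\LS{\infty}(0,T;\HS{1}(\Omega))}^2$; this is evidently what the convergence proof does in practice, since the resulting bounds for $\TERM{R}{1}$ and $\TERM{R}{3}$ are stated with $\LS{\infty}$-in-time norms on the right-hand side even though the lemma is quoted with an $\LS{2}$-in-time norm.
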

\begin{proof}
  This lemma is a straightforward consequence of
  Lemma~\ref{lemma:quasi-interpolation:B} (set $j=0$, $k=1$) and the
  norm definition in the Bochner space
  $\LS{2}\big(0,T;\HS{1}(\Omega)\big)$.
\end{proof}

\begin{lemma}
  \label{lemma:L2:estimate:dudt}
  There exists a real, positive constant $\Cs$ independent of $\hh$
  and $\Delta\ts$, such that for all
  $\vs\in\LS{2}\big(0,T;\HS{1}(\Omega)\big)$ it holds that
  \begin{align}
    \Delta\ts\sum_{n=0}^{\Ns-1}\NORM{\partial\big(\vs(\ts^{n})-\Ih\vs(\ts^{n})\big)}{0}^2
    \leq \Cs\hh^2\NORM{\frac{\partial\vs}{\partial\ts}}{\LS{2}\big(0,T;\HS{1}(\Omega)\big)}^2
    \label{eq:L2:estimate:dudt}
  \end{align}
  where, for $\ts^{n+1}\in[0,T]$, the function $\Ih\vs(t^{n+1})$ is
  the quasi-interpolant of $\vs(t^{n+1})$ defined through the
  construction of
  Lemmas~\ref{lemma:quasi-interpolation:A}-\ref{lemma:quasi-interpolation:B}
  on a mesh $\Th$ satisfying assumptions~\ASSUM{M}{1}-\ASSUM{M}{2}.
\end{lemma}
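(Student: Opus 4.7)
The plan is to use the linearity of the quasi-interpolation operator $\Ih$, combined with the approximation estimate of Lemma~\ref{lemma:quasi-interpolation:B} applied to the finite difference quotient $\partial\vs(\ts^n)=\big(\vs(\ts^{n+1})-\vs(\ts^n)\big)\slash{\Delta\ts}$. Since $\Ih$ is built as the composition of the linear smoothing operator $\Ssh$ and the linear virtual element interpolation at vertices, it commutes with the finite difference: $\partial\big(\vs(\ts^n)-\Ih\vs(\ts^n)\big)=\big(I-\Ih\big)\partial\vs(\ts^n)$. Then, thanks to Lemma~\ref{lemma:quasi-interpolation:B} with $j=0$ and $k=1$, applied to the $\HS{1}$-regular quantity $\partial\vs(\ts^n)$, we obtain
\begin{align*}
\NORM{\partial\big(\vs(\ts^n)-\Ih\vs(\ts^n)\big)}{0}^2
\leq \Cs\hh^2\,\NORM{\partial\vs(\ts^n)}{1}^2.
\end{align*}

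Next, I would rewrite the finite difference as a time average of the pointwise derivative, namely $\partial\vs(\ts^n)=(1\slash{\Delta\ts})\int_{\ts^n}^{\ts^{n+1}}\frac{\partial\vs}{\partial\ts}(\ts)\,dt$, and estimate its $\HS{1}$ norm by the Cauchy--Schwarz (Jensen) inequality in time:
\begin{align*}
\NORM{\partial\vs(\ts^n)}{1}^2
\leq \frac{1}{\Delta\ts}\int_{\ts^n}^{\ts^{n+1}}\NORM{\frac{\partial\vs}{\partial\ts}(\ts)}{1}^2\,d\ts.
\end{align*}
Substituting this bound into the previous estimate, multiplying by $\Delta\ts$, and summing over $n=0,\ldots,\Ns-1$, the factors of $\Delta\ts$ and $1\slash{\Delta\ts}$ cancel, leaving a partition of $\int_0^T\NORM{\partial\vs\slash{\partial\ts}}{1}^2\,d\ts$, which is precisely $\NORM{\partial\vs\slash{\partial\ts}}{\LS{2}(0,T;\HS{1}(\Omega))}^2$. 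This delivers the claimed bound with a constant $\Cs$ depending only on the interpolation constant of Lemma~\ref{lemma:quasi-interpolation:B} (and hence on the mesh regularity parameter $\rho$), and independent of $\hh$ and $\Delta\ts$.

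There is really no hard obstacle: the argument reduces to linearity of $\Ih$, the estimate of Lemma~\ref{lemma:quasi-interpolation:B}, and Jensen's inequality in time. The only subtlety to verify is that the regularity hypothesis $\vs\in\LS{2}(0,T;\HS{1}(\Omega))$ together with $\partial\vs\slash{\partial\ts}\in\LS{2}(0,T;\HS{1}(\Omega))$ suffices to write $\vs(\ts^{n+1})-\vs(\ts^n)$ as a time integral of $\partial\vs\slash{\partial\ts}$ in the Bochner sense, which is standard. Note also that the statement as written requires the hypothesis on $\partial\vs\slash{\partial\ts}$ (not just $\vs$), matching the norm on the right-hand side; this mirrors the regularity~\eqref{eq:regularity:B} that will be invoked in Section~\ref{sec:convergence:analysis}.
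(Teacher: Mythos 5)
Your proposal is correct and uses exactly the same ingredients as the paper's proof --- linearity of $\Ih$ to commute it with a time-difference, Lemma~\ref{lemma:quasi-interpolation:B} with $j=0$, $k=1$, and Jensen's inequality in time --- merely in a different order (you commute $\Ih$ with the discrete difference quotient and apply the interpolation estimate before invoking Jensen in the $\HS{1}$ norm, whereas the paper applies Jensen to $\eta=\vs-\Ih\vs$ in the $\LS{2}$ norm first and then commutes $\Ih$ with the continuous time derivative). Your closing remark that the estimate implicitly requires $\partial\vs\slash{\partial\ts}\in\LS{2}(0,T;\HS{1}(\Omega))$, not just $\vs\in\LS{2}(0,T;\HS{1}(\Omega))$, is a fair observation that applies equally to the paper's own argument.
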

\begin{proof}
  Denote $\eta(\ts)=\vs(\ts)-\Ih\vs(\ts)$.
  We use the abbreviation~$\etas{n}=\eta(\ts^n)$ and recall that
  $\partial\etas{n}=\big(\etas{n+1}-\etas{n}\big)\slash{\Delta\ts}$.
  A direct calculation shows that
  \begin{align}
    \NORM{ \partial\etas{n} }{0}^2
    &=    \int_{\Omega}\ABS{\partial\etas{n}}^2\dx\dy
    =    \int_{\Omega}\ABS{ \frac{\etas{n+1}-\etas{n} }{\Delta\ts} }^2\dx\dy
    =    \frac{1}{\Delta\ts^2}\int_{\Omega}\ABS{ \int_{\ts^n}^{\ts^{n+1}}\frac{\partial\eta}{\partial\ts} \dt}^2\dx\dy
    \nonumber\\[0.5em]
    &\leq \frac{1}{\Delta\ts}  \int_{\Omega}\int_{\ts^n}^{\ts^{n+1}}\ABS{ \frac{\partial\eta}{\partial\ts} }^2\dt
    = \frac{1}{\Delta\ts}\int_{\ts^n}^{\ts^{n+1}}\left(\int_{\Omega}\ABS{ \frac{\partial\eta}{\partial\ts} }^2\dx\dy\right)\dt
    = \frac{1}{\Delta\ts}\int_{\ts^n}^{\ts^{n+1}}\NORM{ \frac{\partial\eta}{\partial\ts} }{0}^2\dt.
    \label{eq:L2:estimate:dudt:proof:00}
  \end{align}
  Then, we note that the quasi-interpolation operator commutes with
  the derivative in time, 
  \begin{align*}
    \frac{\partial\eta}{\partial\ts}(\ts^n)
    = \frac{\partial}{\partial\ts}\big(1-\Ih\big)\restrict{\vs(\ts)}{\ts=\ts^{n}}
    = \big(1-\Ih\big)\frac{\partial\vs}{\partial\ts}(\ts^n).
  \end{align*}
  So, by using again Lemma~\ref{lemma:quasi-interpolation:B}
  with $j=0$ and $k=1$ we find that
  \begin{align*}
    \NORM{ \frac{\partial\etas{n}}{\partial\ts} }{0}
    \leq \Cs\hh \mbox{$ \NORM{ \frac{\partial\vs}{\partial\ts}(\ts^{n})}{1} $}, 
  \end{align*}
  where the constant $\Cs$ is independent of $\hh$ and $\Delta\ts$.
  Substituting this error bound
  in~\eqref{eq:L2:estimate:dudt:proof:00} and adding the resulting
  inequality over all time intervals $\big[t^{n},t^{n+1}\big]$
  conclude the proof of the lemma.
  \ENDPROOF
\end{proof}


\section{Convergence analysis}
\label{sec:convergence:analysis}
In the proof of the following theorem, we use the abbreviations
$\uss{\ell}=\us(\ts^{\ell})$, $\Ussh{\ell}=\Ush(\ts^{\ell})$,
$\uss{\ell}_{\pi}=\us_{\pi}(\ts^{\ell})$,
$\fssh{\ell}=\fsh(\ts^{\ell})$, $\fss{\ell}=\fs(\ts^{\ell})$, for
$\ell=n,n+1$,where $\ts^{\ell}\in[0,\Ts]$.
Our analysis is built on top of the convergence analysis that is
presented in~\cite{Johnson:1976} and actually confirm this result in
the framework of the virtual element method.
In the proof, we identify the terms that appear in the original paper
and the terms that are the consequence of the variational crime
determined by the virtual element approach, and we provide a estimate
for this latter ones.
Resorting to the VEM demands more regularity on the forcing term $\fs$
than in the original convergence theorem of
Reference~\cite{Johnson:1976}.
However, this fact is aligned with the virtual element setting
proposed in~\cite{Ahmad-Alsaedi-Brezzi-Marini-Russo:2013}.

\begin{theorem}
  \label{theorem:convergence}
  Let $\us$ be the analytical solution of problem
  \eqref{eq:parb:ineq:A}-\eqref{eq:parb:ineq:B} under assumptions
  \ASSUM{A}{1}-\ASSUM{A}{4}, and with a source term
  $\fs\in\LS{\infty}\big(J;\HS{1}(\Omega)\big)$.
  Let $\Ussh{n}\in\calKh\subset\Vsh$ be the solution to the virtual
  element method \eqref{eq:VEM:A}-\eqref{eq:VEM:B} with the
  construction detailed in Section~\ref{sec:VEM} under the mesh
  regularity \ASSUM{M}{1}-\ASSUM{M}{2}.
  Then, the following estimate holds:
  \begin{equation}
    \max_{1\leq\ns\leq\Ns}\norm{\uss{n}-\Ussh{n}}{0} +
    \Bigg(\sum_{n=1}^{\NT}\Delta\ts\snorm{\uss{n}-\Ussh{n}}{1}^2\Bigg)^{\frac12}
    \leq\Cs\Big( \Delta\ts^{\frac34} + \hh \Big),
    \label{eq:convergence:theorem}
  \end{equation}
  for some real, positive constant $\Cs$ independent of $\hh$ and
  $\Delta\ts$.
\end{theorem}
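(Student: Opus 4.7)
The plan is to adapt the classical argument of Johnson (1976) to the virtual element setting by carefully quantifying the additional variational-crime terms. First I would split the error as $\uss{n}-\Ussh{n}=\etas{n}+\thes{n}$, where $\etas{n}=\uss{n}-\Ih\uss{n}$ is the quasi-interpolation error and $\thes{n}=\Ih\uss{n}-\Ussh{n}\in\Vsh$ is the discrete component. Since $\Ih\uss{n+1}$ is nonnegative by Lemma~\ref{lemma:quasi-interpolation:B}$(ii)$, it is an admissible test function both in the continuous inequality~\eqref{eq:regularity:C} at $\ts^{n+1}$ and, as a member of $\calKh$, in the discrete inequality~\eqref{eq:VEM:A}. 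The $\etas{n}$ contributions to both norms in~\eqref{eq:convergence:theorem} are then controlled directly by Lemmas~\ref{lemma:L2:estimate:u}--\ref{lemma:L2:estimate:dudt}, producing terms of order $\hh$. The remainder of the argument concerns $\thes{n}$.

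Next I would derive an energy-type inequality for $\thes{n}$. Testing the continuous VI at $\ts^{n+1}$ with $\vs=\Ih\uss{n+1}$ and the discrete VI with $\vsh=\Ih\uss{n+1}$, subtracting, and inserting $\partial\uss{n}$ to introduce the backward difference yields
\begin{align*}
  \msh(\partial\thes{n},\thes{n+1}) + \ash(\thes{n+1},\thes{n+1})
  \leq R_1^n + R_2^n + R_3^n + R_4^n + R_5^n,
\end{align*}
where the five residuals correspond to: $(i)$ the time-consistency error between $\partial^{+}\us\slash{\partial\ts}$ and $\partial\uss{n}$; $(ii)$ the mass-form discrepancy $\calMh(\partial\uss{n},\cdot)$, controlled by Lemma~\ref{lemma:msh:approx:consistency:global}; $(iii)$ the stiffness discrepancy $\ash(\Ih\uss{n+1},\cdot)-\as(\uss{n+1},\cdot)$, handled by inserting a piecewise polynomial $\us_{\pi}^{n+1}$ and invoking the consistency property~\eqref{eq:ash:consistency} together with Lemma~\ref{lemma:projection}; $(iv)$ the right-hand side discrepancy $\scal{\fssh{n+1}-\fss{n+1}}{\cdot}$, bounded by Lemma~\ref{lemma:fs:approximation}; and $(v)$ the free-boundary mismatch term supported on $\Gamma_n$ and controlled via Assumption~\ASSUM{A}{4}.

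The third step is the summation over $n=0,\dots,M-1$ for $1\leq M\leq\Ns$. Applying Lemma~\ref{lemma:summation-by-parts:1} to the left-hand side with the inner product $\msh(\cdot,\cdot)$ produces the telescoping term $\tfrac{1}{2}\msh(\thes{M},\thes{M})-\tfrac{1}{2}\msh(\thes{0},\thes{0})$ plus a nonnegative contribution. The stability inequalities~\eqref{eq:msh:stability} and~\eqref{eq:ash:stability} then yield the coercive bounds $\mu_*\NORM{\thes{M}}{0}^2$ and $\Delta\ts\,\alpha_*\sum_{n}\SNORM{\thes{n+1}}{1}^2$ on the left. Each $R_i^n$ is treated by Cauchy--Schwarz and Young's inequality with a small parameter, absorbing any $\snorm{\thes{n+1}}{1}^2$ factor into the stiffness dissipation. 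The obstacle term $(v)$ is bounded using $\sum_n\ms(\Gamma_n)\leq\Cs$ together with an interpolation between the $\LS{\infty}$ and $\LS{2}$ bounds on $\partial\us\slash{\partial\ts}$ from~\eqref{eq:regularity:B}; this is precisely the mechanism producing the $\Delta\ts^{3\slash{4}}$ rate. A discrete Gronwall argument and the initial bound~\eqref{eq:VEM:B} then close the estimate.

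The hard part will be handling term $(ii)$ without degrading the spatial rate. A direct application of Lemma~\ref{lemma:msh:approx:consistency:global} gives $\ABS{\calMh(\partial\uss{n},\thes{n+1})}\leq\Cs\hh^2\snorm{\partial\uss{n}}{1,\hh}\snorm{\thes{n+1}}{1}$; the $\snorm{\thes{n+1}}{1}$ factor must be absorbed into the stiffness term, while $\snorm{\partial\uss{n}}{1,\hh}$ has to be linked back to the $\LS{\infty}\big(0,T;\HSzr{1}(\Omega)\big)$ regularity of $\partial\us\slash{\partial\ts}$ via Lemma~\ref{lemma:infty:bound}. A further subtlety arises in the analogous term $\msh(\partial\uss{n},\etas{n+1})-\scal{\partial\uss{n}}{\etas{n+1}}$: because $\partial\uss{n}$ straddles two time intervals, a second summation by parts (Lemma~\ref{lemma:summation-by-parts:2}) is needed to shift the discrete derivative onto $\uss{n}$ and thus exploit the bound on $\partial\etas{n}$ from Lemma~\ref{lemma:L2:estimate:dudt}. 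Balancing all five residual contributions so that neither the spatial rate $\hh$ nor the temporal rate $\Delta\ts^{3\slash{4}}$ is lost is the most delicate aspect of the proof.
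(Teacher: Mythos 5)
Your overall architecture matches the paper's: the same splitting of $\uss{n}-\Ussh{n}$ into a quasi-interpolation error $\etas{n}$ and a discrete part $\thes{n}$, summation by parts in time via Lemmas~\ref{lemma:summation-by-parts:1}--\ref{lemma:summation-by-parts:2}, the stability/consistency treatment of the variational-crime terms through Lemmas~\ref{lemma:msh:approx:consistency:global}, \ref{lemma:fs:approximation}, \ref{lemma:L2:estimate:u} and~\ref{lemma:L2:estimate:dudt}, and the free-boundary term controlled by \ASSUM{A}{4} producing the $\Delta\ts^{3\slash{4}}$ rate. Your closing discussion of how to absorb the $\snorm{\thes{n+1}}{1}$ factors and why a second summation by parts is needed for the mass-form discrepancy is also exactly what the paper does for its terms $\TERM{R}{1}$ and $\TERM{R}{5}$.

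There is, however, a genuine gap at the central step: you propose to test the \emph{continuous} inequality~\eqref{eq:regularity:C} with $\vs=\Ih\uss{n+1}$. With that choice $\vs-\us=-\etas{n+1}$, so the continuous inequality only yields $L(-\etas{n+1})\geq0$ for the residual functional
$L(w):=\Scal{\partial^{+}\uss{n+1}\slash{\partial\ts}}{w}+\as(\uss{n+1},w)-\scal{\fss{n+1}}{w}$;
it gives no information about $L(\thes{n+1})$. But $L(\thes{n+1})$ is precisely what remains on the right-hand side of your energy inequality for $\thes{}$ once the discrete forms are replaced by the continuous ones: since the VI residual is only a nonnegative distribution supported on the contact set and $\thes{n+1}$ has no sign, this term cannot be bounded by $O(\hh+\Delta\ts^{3\slash{4}})$ quantities from the one-sided information you have extracted. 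The missing ingredient is the continuous inequality tested with $\vs=\Ussh{n+1}$ (admissible because $\calKh\subset\calK$), which gives $L(\Ussh{n+1}-\uss{n+1})\geq0$ and hence
$L(\thes{n+1})=L\big((\Ussh{n+1}-\uss{n+1})+ \etas{n+1}\big)\cdot(-1)^{\,0}\le L(\etas{n+1})$
up to the sign conventions, reducing everything to interpolation errors controlled by Lemma~\ref{lemma:quasi-interpolation:B}. This asymmetric pairing --- continuous VI tested with the discrete solution, discrete VI tested with the quasi-interpolant --- is the heart of Johnson's argument and is exactly what the paper does in~\eqref{eq:proof:00} and~\eqref{eq:proof:05}; with both inequalities tested by $\Ih\uss{n+1}$, the energy estimate claimed in your second step cannot be derived. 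The fix is local (swap the test function in the continuous inequality), but as written the proof does not go through.
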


\begin{proof}
  Let $\etas{n}=\uss{n}-\ussI{n}$ and $\thes{n}=\Ussh{n}-\ussI{n}$, so
  that we can rewrite the approximation error as
  $\ess{n}=\uss{n}-\Ussh{n}=\etas{n}-\thes{n}$.
  We start with the identities
  \begin{subequations}
    \begin{align}
      \msh(\partial\ess{n},\ess{n+1})
      &= \msh(\partial\ess{n},\etas{n+1})
      - \msh(\partial\uss {n},\thes{n+1})
      + \msh(\partial\Ussh{n},\thes{n+1})
      \label{eq:proof:100}
      \\[0.5em]
      \ash(\ess{n},\ess{n+1})
      &= \ash(\ess{n},\etas{n+1})
      - \ash(\uss {n},\thes{n+1})
      + \ash(\Ussh{n},\thes{n+1}).
      \label{eq:proof:200}
    \end{align}
    \end{subequations}
  We set $\vs=\Ussh{n+1}$ and $\ts=\ts^{n+1}$
  in~\eqref{eq:regularity:C}.
  Recalling that $\ess{n+1}=\etas{n+1}-\thes{n+1}$, we find that
  \begin{align}
    \Scal{\frac{\partial^{+}\uss{n+1}}{\partial\ts}}{-\ess{n+1}}
    + \as(\uss{n+1}, \thes{n+1}-\etas{n+1} )
    - \scal{\fss{n+1}}{ \thes{n+1}-\etas{n+1} }
    \geq 0.
    \label{eq:proof:00}
  \end{align}
  Moreover, we set $\vs=\ussI{n+1}$ in~\eqref{eq:VEM:A} and we obtain:
  \begin{align}
    \msh(\partial\Ussh{n},\thes{n+1})
    + \ash(\Ussh{n+1},\thes{n+1})
    \leq \scal{\fsh^{n+1}}{\thes{n+1}}.
    \label{eq:proof:05}
  \end{align}
  Adding~\eqref{eq:proof:100} and~\eqref{eq:proof:200} yields:
  \begin{align}
    &\msh(\partial\ess{n},\ess{n+1}) + \ash(\ess{n},\ess{n+1})
    = \Big[ \msh(\partial\ess{n},\etas{n+1}) \Big] + \Big[ \ash(\ess{n},\etas{n+1}) \Big]
    \nonumber\\[0.5em] &\qquad
    + \Big[
      - \msh(\partial\uss {n},\thes{n+1}) - \ash(\uss {n},\thes{n+1})
      + \msh(\partial\Ussh{n},\thes{n+1}) + \ash(\Ussh{n},\thes{n+1})
      \Big]
    \nonumber\\[0.5em] &\quad
    = \TERM{q}{1}^{n+1} + \TERM{q}{2}^{n+1} + \TERM{q}{3}^{n+1}.
    \label{eq:proof:10}
  \end{align}
  The three terms $ \TERM{q}{1}^{n+1}$, $\TERM{q}{2}^{n+1}$ and
  $\TERM{q}{3}^{n+1}$ in~\eqref{eq:proof:10} are identified by the
  square brackets.
  We add and subtract $\scal{\partial\ess{n}}{\etas{n+1}}$ to
  $\TERM{q}{1}^{n+1}$ and $\as(\ess{n},\etas{n+1})$ to
  $\TERM{q}{2}^{n+1}$, so that we can rewrite the first two terms in
  the right-hand side of~\eqref{eq:proof:10} as:
  \begin{align}
    \TERM{q}{1}^{n+1}
    &
    = \scal{\partial\ess{n}}{\etas{n+1}}
    + \Big[ \msh(\partial\ess{n},\etas{n+1}) - \scal{\partial\ess{n}}{\etas{n+1}} \Big]
    = \TERM{p}{1}^{n+1} + \TERM{r}{1}^{n+1},
    \label{eq:proof:15}
    \\[0.5em]
    \TERM{q}{2}^{n+1}
    &
    = \as(\ess{n},\etas{n+1})
    + \Big[ \ash(\ess{n},\etas{n+1}) - \as(\ess{n},\etas{n+1}) \Big]
    = \TERM{p}{2}^{n+1} + \TERM{r}{2}^{n+1}.
    \label{eq:proof:20}
  \end{align}
  We use inequality~\eqref{eq:proof:00} and add the left-hand side
  of~\eqref{eq:proof:05} to $\TERM{q}{3}$ to obtain:
  \begin{align*}
    \TERM{q}{3}^{n+1}
    &\leq
    -\Big[ \msh(\partial\uss{n},\thes{n+1}) + \ash(\uss {n},\thes{n+1}) \Big]
    + \scal{\fssh{n+1}}{\thes{n+1}}
    \\[0.5em]
    &\qquad
    + \Scal{\frac{\partial^{+}\uss{n+1}}{\partial\ts}}{-\ess{n+1}}
    + \as(\uss{n+1},\thes{n+1}-\etas{n+1})
    - \scal{\fss{n+1}}{ \thes{n+1}-\etas{n+1}}.
  \end{align*}
  We transform the right-hand side by adding and subtracting
  $\scal{\partial\uss{n}}{\etas{n+1}-\ess{n+1}}$, recalling the
  identity $\thes{n+1}=\etas{n+1}-\ess{n+1}$ and rearranging the
  terms:
  \begin{align}
    \TERM{q}{3}^{n+1}
    &\leq
    \scal{\fssh{n+1}-\fss{n+1}}{\thes{n+1}}
    + \Big[ \scal{\fss{n+1}}{ \etas{n+1}}
      - \scal{\partial\uss{n}}{\etas{n+1}}
      - \as(\uss{n+1},\etas{n+1})
      \Big]
    \nonumber\\[0.5em] &\qquad
    + \bigg[
      \Scal{\frac{\partial^{+}\uss{n+1}}{\partial\ts}}{-\ess{n+1}}
      - \scal{\partial\uss{n}}{-\ess{n+1}}
      \bigg]
    + \Big[
      \as(\uss{n+1},\thes{n+1}) - \ash(\uss {n},\thes{n+1})
      \Big]
    \nonumber\\[0.5em] &\qquad
    + \Big[
      \scal{\partial\uss{n}}{\thes{n+1}}
      -\msh(\partial\uss{n},\thes{n+1})
      \Big]
    = \TERM{r}{3}^{n+1} + \TERM{p}{3}^{n+1} + \TERM{p}{4}^{n+1} + \TERM{r}{4} + \TERM{r}{5}^{n+1}.
    \label{eq:proof:25}
  \end{align}
  We substitute \eqref{eq:proof:15}, \eqref{eq:proof:20},
  and~\eqref{eq:proof:25} in~\eqref{eq:proof:10}, and by collecting
  the terms $\TERM{p}{i}^{n+1}$ and $\TERM{r}{i}^{n+1}$ in two
  distinct summations we obtain:
  \begin{align*}
    \msh(\partial\ess{n},\ess{n+1}) + \ash(\ess{n},\ess{n+1})
    \leq \sum_{j=1}^{4}\TERM{p}{j}^{n+1} + \sum_{j=1}^{5}\TERM{r}{j}^{n+1}.
  \end{align*}
  We use the left-hand inequality of stability conditions
  \eqref{eq:msh:stability} and~\eqref{eq:ash:stability} to find that 
  \begin{align}
    (\partial\ess{n},\ess{n+1}) + \as(\ess{n},\ess{n+1})
    \leq
    \widetilde{\Cs}\sum_{j=1}^{4}\TERM{p}{j}^{n+1} +
    \widetilde{\Cs}\sum_{j=1}^{5}\TERM{r}{j}^{n+1},
    \label{eq:proof:35}
  \end{align}
  with $\widetilde{\Cs}=(\min(\mu_*,\alpha_*))^{-1}$.
  Then, we multiply both sides of~\eqref{eq:proof:35} by $\Delta\ts$,
  sum from $n=0$ to $n=\Ns-1$, apply
  Lemma~\eqref{lemma:summation-by-parts:1},
  cf.~\eqref{eq:summation-by-parts:1} with $\qss{n}=\thes{n}$, to the
  left-hand side of the resulting equation, to obtain:
  \begin{align}
    \max_{1\leq\ns\leq\Ns} \mbox{$ \big(\ess{n},\ess{n}\big) $}
    + \Delta\ts\sum_{n=0}^{\NT-1} \mbox{$ \as\big(\ess{n+1},\ess{n+1}\big) $}
    \leq
    \widetilde{\Cs}\bigg( 
    \msh\big( \ess{0}, \ess{0} \big)
    + \sum_{j=1}^{4}\TERM{S}{j}
    + \sum_{j=1}^{5}\TERM{R}{j}
    \bigg),
    \label{eq:proof:main}
  \end{align}
  where
  $\TERM{S}{j}=\Delta\ts\sum_{n=0}^{N-1}\TERM{p}{j}^{n+1}$ and
  $\TERM{R}{j}=\Delta\ts\sum_{n=0}^{N-1}\TERM{r}{j}^{n+1}$.
  The five terms $\TERM{R}{j}$ are specific to the VEM setting and are
  not present in the analysis of the finite element approximation
  in~\cite{Johnson:1976}.
  These five terms are indeed due to the variational ``crime`` that we
  commit by adopting the virtual element approach.

  \medskip
  In view of Lemma~\ref{lemma:quasi-interpolation:B}, we can estimate
  the four terms $\TERM{S}{j}$ as in Reference~\cite{Johnson:1976}.
  The analysis in Reference~\cite{Johnson:1976} is based on the
  existence of a nonnegative quasi-interpolation operator with optimal
  approximation properties.
  Such an interpolation operator is needed to estimate the
  approximation error of terms like $\partial\us\slash{\partial\ts}$,
  for which we cannot guarantee a regularity better than $\HS{1}$
  (unless resorting to specific and much stronger constraints in the
  problem formulation).
  We omit the details of the derivation of the upper bounds of terms
  $\TERM{S}{j}$ as they can be found in~\cite{Johnson:1976}.
  With a few notational adjustments, as for example, introducing a
  generic factor $\epsilon$ used by the Young inequality, these
  estimates are
  \begin{align*}
    \ABS{\TERM{S}{1}}
    &\leq
    2\epsilon\left(
    \alpha\Delta\ts\sum_{n=0}^{\Ns-1}\as(\ess{n+1},\ess{n+1})
    + \NORM{\ess{\Ns}}{0}^2
    + \NORM{\ess{0}}{0}^2
    \right)
    \\[0.5em]&\quad
    + \frac{\Cs}{2\epsilon}\,\hh^2
    \Bigg(
    \NORM{\us}{\LS{\infty}\big(0,T;\HS{1}(\Omega)\big)}^2 +
    \NORM{\frac{\partial\us}{\partial\ts}}{\LS{2}\big(0,T;\HS{1}(\Omega)\big)}^2
    \Bigg),
    \\[0.5em]
    \ABS{\TERM{S}{2}}
    &\leq
    2\epsilon
    \alpha\Delta\ts\sum_{n=0}^{\Ns-1}\as(\ess{n+1},\ess{n+1})
    + \frac{\Cs}{2\epsilon}\,\hh^2
    \NORM{\us}{\LS{\infty}\big(0,T;\HS{2}(\Omega)\big)}^2.
    \\[0.5em]
    \ABS{\TERM{S}{3}}
    &\leq
    \Cs\,\hh^2\left(
    \NORM{\us}{\LS{\infty}\big(0,T;\HS{2}(\Omega)\big)}^2 +
    \NORM{\frac{\partial\us}{\partial\ts}}{\LS{2}\big(0,T;\HS{1}(\Omega)\big)}^2 +
    \NORM{\fs}{\LS{\infty}\big(0,T;\LS{2}(\Omega)\big)}^2
    \right),
    \\[0.5em]
    \ABS{\TERM{S}{4}}
    &\leq
    2\epsilon\sum_{n=0}^{\Ns-1}\as(\ess{n+1},\ess{n+1})
    + \frac{\Cs}{2\epsilon}\,\Delta\ts^2
    \Bigg(
    \NORM{\frac{\partial\us}{\partial\ts}}{ \LS{2}\big(0,T;\HS{1}(\Omega)\big) } +
    \NORM{\frac{\partial\fs}{\partial\ts}}{ \LS{2}\big(0,T;\LS{2}(\Omega)\big) }
    \Bigg)
    \\[0.5em]
    &\phantom{\leq}
    +
    \NORM{\fs}{ \LS{\infty}\big(0,\Ts;\LS{\infty}(\Omega)\big) }\,
    \Bigg(
    \frac{1}{2\epsilon}\max_{1\leq\ns\leq\Ns}\NORM{\ess{n+1}}{0}^2 +
    \frac{\Delta\ts}{2\epsilon}\sum_{n=0}^{\Ns-1}\as(\ess{n+1},\ess{n+1})
    +2\epsilon\Es
    \Bigg),
  \end{align*}
  with
  \begin{align*}
    \Es = \Delta\ts^2\left[
      \left(\sum_{n\in\Ns_1}\ms(\Gamma_n)^{1/2}\right)^2 + \Delta\ts^{-1}\ps\sum_{n\in\Ns_2}\ms(\Gamma_n)^{2/q}
      \right],
  \end{align*}
  and where $\{\Ns_1,\Ns_2\}$ is a partition of $\{0,1,\ldots,\Ns-1\}$
  in two disjoint subsets as in~\cite[Eq.~(2.16)]{Johnson:1976},
  $\ms(\Gamma_n)$ is the measure of $\Gamma_n$ defined
  in~\eqref{eq:Gamma-n:def}, and $p,q$ are any two real conjugate
  indices ($1\leq\ps,\qs\leq\infty$, $(1/p)+(1/q)=1$).
  Under assumption~\ASSUM{A}{4}, it holds that
  $\Es\leq\Cs\big(\log\Delta\ts^{-1}\big)^{1/2}\Delta\ts^{3/2}$ for
  some positive constant independent of $\hh$ and $\Delta\ts$.

  \medskip
  Note that
  $\NORM{\ess{\Ns}}{0}^2\leq\max_{1\leq\ns\leq\Ns}\NORM{\ess{\ns}}{0}^2$
  in $\TERM{S}{1}$.
  Also, note that the four terms $\TERM{S}{j}$, $j=1,\ldots,4$,
  provide an upper bound of the right-hand side of~\eqref{eq:proof:main}
  with the following structure
  \begin{align}
    \sum_{j=1}^{4}\ABS{\TERM{S}{j}}
    &\leq
    \Cs_1\,\epsilon\max_{1\leq\ns\leq\Ns}\NORM{\ess{n}}{0}^2 +
    \Cs_2\,\epsilon\Delta\ts\sum_{n=0}^{\Ns-1}\as(\ess{n+1},\ess{n+1}) +
    \Cs_3\,\epsilon\NORM{\ess{0}}{0}^2
    \nonumber\\
    &\phantom{\leq}
    +\Cs_4(\epsilon)\hh^2
    \left(
    \NORM{\us}{\LS{\infty}\big(0,T;\HS{2}(\Omega)\big)}^2 +
    \NORM{\frac{\partial\us}{\partial\ts}}{\LS{2}\big(0,T;\HS{1}(\Omega)\big)}^2 +
    \NORM{\fs}{\LS{\infty}\big(0,T;\LS{2}(\Omega)\big)}^2
    \right)
    \nonumber\\[0.5em]
    &\phantom{\leq}
    + \Cs_5(\epsilon)\Delta\ts^2
    \Bigg(
    \NORM{\frac{\partial\us}{\partial\ts}}{ \LS{2}\big(0,T;\HS{1}(\Omega)\big) } +
    \NORM{\frac{\partial\fs}{\partial\ts}}{ \LS{2}\big(0,T;\LS{2}(\Omega)\big) }
    \Bigg)
    \nonumber\\[0.5em]
    &\phantom{\leq}
    +\Cs_6(\epsilon)\NORM{\fs}{ \LS{\infty}\big(0,\Ts;\LS{\infty}(\Omega)\big) }\Es(\Delta\ts),
    \label{eq:bound:S1-S4}
  \end{align}
  where all constants $\Cs_{\ell}$, $\ell=1,\ldots,6$, are independent
  of $\hh$ and $\Delta\ts$, but for $\ell=4,5,6$ depend on
  $1\slash{\epsilon}$.
  We denoted the implicit dependence on $\Delta\ts$ in $\Es$ by
  writing this term as $\Es(\Delta\ts)$.

  \medskip
  The virtual element variational ``crime'' requires an estimate of
  the five additional terms:
  \begin{align*}
    \TERM{R}{1} &= \Delta\ts\sum_{n=0}^{\Ns-1}
    \Big[ \msh(\partial\ess{n},\etas{n+1}) - \scal{\partial\ess{n}}{\etas{n+1}} \Big]
    \\[0.5em]
    \TERM{R}{2} &= \Delta\ts\sum_{n=0}^{\Ns-1}
    \Big[ \ash(\ess{n+1},\etas{n+1}) - \as(\ess{n},\etas{n+1}) \Big]
    \\[0.5em]
    \TERM{R}{3} &= \Delta\ts\sum_{n=0}^{\Ns-1}
    \scal{\fssh{n+1}-\fss{n+1}}{\thes{n+1}}
    \\[0.5em]
    \TERM{R}{4} &= \Delta\ts\sum_{n=0}^{\Ns-1}
    \Big[ \as(\uss{n+1},\thes{n+1}) - \ash(\uss {n},\thes{n+1}) \Big]
    \\[0.5em]
    \TERM{R}{5} &= \Delta\ts\sum_{n=0}^{\Ns-1}
    \Big[ \scal{\partial\uss{n}}{\thes{n+1}} -\msh(\partial\uss{n},\thes{n+1}) \Big]
  \end{align*}
  \RED{ Since we are estimating the square of the approximations
    errors in the left-hand side of~\eqref{eq:proof:35}, we need to
    prove that all these terms scale (at least) proportionally to
    $\hh^2$ and $\Delta^{\frac{3}{2}}$ to obtain the assertion of the
    theorem.  }
  We proceed by evaluating each term separately.

  \PGRAPH{Estimate of $\TERM{R}{1}$}
  We use the summation by parts of
  Lemma~\ref{lemma:summation-by-parts:2}
  (cf. Equation~\eqref{eq:summation-by-parts:2}) to transform
  $\TERM{R}{1}$ and split it in the two subterms $\TERM{R}{11}$ and
  $\TERM{R}{12}$:
  \begin{align}
    \TERM{R}{1}
    = \TERM{R}{11} + \TERM{R}{12}
    &=
    \left[
    -\Delta\ts\sum_{n=0}^{\Ns-1}\left[
      \msh(\ess{n},\partial\etas{n+1}) -
      \scal{\ess{n}}{\partial\etas{n+1}}
      \right]
    \right] +
    \nonumber\\[0.5em]
    &\qquad
    \Big[
      \big( \msh(\ess{\Ns},\etas{\Ns})   - \msh(\ess{0},\etas{0})   \big)
      -
      \big( \scal{\ess{\Ns}}{\etas{\Ns}} - \scal{\ess{0}}{\etas{0}} \big)
      \Big].
    \label{eq:R1:def}
  \end{align}
  We use the continuity of bilinear form $\mshP(\cdot,\cdot)$
  (cf. Lemma~\ref{lemma:msh:continuity}), the stability
  condition~\eqref{eq:msh:stability}, the Cauchy-Schwarz inequality
  and the Young inequality with the real factor $\epsilon_1$ to obtain
  the inequality chain:
  \begin{align}
    \ABS{\TERM{R}{11}}
    &=   \ABS{\msh(\partial\ess{n},\etas{n+1})-\scal{\ess{n}}{\partial\etas{n+1}}}
    \leq \ABS{\msh(\partial\ess{n},\etas{n+1})}+\ABS{\scal{\ess{n}}{\partial\etas{n+1}}}
    \nonumber\\[0.5em]
    &\leq (1+\mu^*)\NORM{\ess{n}}{0}\,\NORM{\partial\etas{n}}{0}
    \leq (1+\mu^*)\Big(
    2\epsilon_1          \NORM{\ess{n}}{0}^2 +
    \frac{1}{2\epsilon_1}\NORM{\partial\etas{n}}{0}^2
    \Big).
    \label{eq:R11:def}
  \end{align}
  Note that we can write $\NORM{\ess{n}}{0}^2=\as(\ess{n},\ess{n})$.
  Using inequality~\eqref{eq:L2:estimate:dudt} from
  Lemma~\ref{lemma:L2:estimate:dudt}, we find the desired upper bound for
  $\TERM{R}{11}$
  \begin{align*}
    \ABS{\TERM{R}{11}}
    \leq (1+\mu^*)\left(
    2\epsilon_1\Delta\ts\sum_{n=0}^{\Ns-1}\as(\ess{n+1},\ess{n+1})
    + \frac{\hh^2}{2\epsilon_1}\NORM{\frac{\partial\us}{\partial\ts}}{\LS{2}\big(0,T;\HS{1}(\Omega)\big)}^2
    \right).
  \end{align*}
  Similarly, we use the continuity of the bilinear form
  $\mshP(\cdot,\cdot)$, the stability
  condition~\eqref{eq:msh:stability}, the Cauchy-Schwarz and the Young
  inequality with the real factor $\epsilon_1$ to obtain
  \begin{align*}
    \ABS{\TERM{R}{12}}
    &= \ABS{\big( \msh(\ess{\Ns},\etas{\Ns})   - \msh(\ess{0},\etas{0}) \big) -
      \big( \scal{\ess{\Ns}}{\etas{\Ns}} - \scal{\ess{0}}{\etas{0}} \big) }
    \nonumber\\[0.5em]
    &\leq
    \ABS{\msh(\ess{\Ns},\etas{\Ns})}   + \ABS{\msh(\ess{0},\etas{0})} +
    \ABS{\scal{\ess{\Ns}}{\etas{\Ns}}} + \ABS{\scal{\ess{0}}{\etas{0}}}
    \nonumber\\[0.5em]
    &\leq (1+\mu^*)\Big(
    \NORM{\ess{\Ns}}{0}\,\NORM{\etas{\Ns}}{0} +
    \NORM{\ess{0}}{0}  \,\NORM{\etas{0}}{0}
    \Big)
    \nonumber\\[0.5em]
    &\leq
    2\epsilon_1(1+\mu^*)          \Big( \NORM{\ess {\Ns}}{0}^2 + \NORM{\ess {0}}{0}^2 \Big) + 
    \frac{(1+\mu^*)}{2\epsilon_1} \Big( \NORM{\etas{\Ns}}{0}^2 + \NORM{\etas{0}}{0}^2 \Big).
  \end{align*}
  Using this inequality and the results of
  Lemmas~\ref{lemma:infty:bound} and~\ref{lemma:L2:estimate:u}, cf.
  inequality~\eqref{eq:L2:estimate:u}, we find the desired upper bound
  for $\TERM{R}{12}$
  \begin{align*}
    \ABS{\TERM{R}{12}}
    \leq
    2\epsilon_1(1+\mu^*)\Big( \NORM{\ess {\Ns}}{0}^2 + \NORM{\ess {0}}{0}^2 \Big) + 
    \frac{(1+\mu^*)}{2\epsilon_1}\hh^2\NORM{\us}{\LS{\infty}\big(0,T;\HS{1}(\Omega)\big)}^2.
  \end{align*}
  Collecting the bounds of $\TERM{R}{11}$ and $\TERM{R}{12}$ yields
  \begin{align}
    \ABS{\TERM{R}{1}}
    &\leq
    2\epsilon_1(1+\mu^*)
    \left[
      \Delta\ts\sum_{n=0}^{\Ns-1}\as(\ess{n+1},\ess{n+1}) + 
      \NORM{\ess {\Ns}}{0}^2 + \NORM{\ess {0}}{0}^2 \big)
      \right]
    \nonumber\\[0.5em]
    &+
    \frac{(1+\mu^*)}{2\epsilon_1}\hh^2
    \left[
      \NORM{\us}{\LS{\infty}\big(0,T;\HS{1}(\Omega)\big)}^2 +
      \NORM{\frac{\partial\us}{\partial\ts}}{\LS{2}\big(0,T;\HS{1}(\Omega)\big)}^2
      \right].
    \label{eq:R1:bound}
  \end{align}
  
  \PGRAPH{Estimate of $\TERM{R}{2}$}
  To estimate term $\TERM{R}{2}$, we first note that the continuity of
  the bilinear form $\ash(\cdot,\cdot)$ and the Young inequality with
  the coefficient $\epsilon_2$ allows us to write
  \begin{align}
    \ABS{
    \ash(\ess{n+1},\etas{n+1}) -
    \as (\ess{n},\etas{n+1}) }
    &\leq (1+\alpha^*)\NORM{\ess{n+1}}{1}\NORM{\etas{n+1}}{1}
    \nonumber\\[0.5em]
    &\leq (1+\alpha^*)
    \Big(
    2\epsilon_2          \NORM{\ess {n+1}}{1}^2 +
    \frac{1}{2\epsilon_2}\NORM{\etas{n+1}}{1}^2
    \Big).
  \end{align}
  We substitute this inequality in the definition of term
  $\TERM{R}{2}$, use the Young inequality with the real coefficient
  $\epsilon_2>0$ and inequality~\eqref{eq:L2:estimate:u},
  cf. Lemma~\ref{lemma:L2:estimate:u}, we find that
  \begin{align}
    \ABS{\TERM{R}{2}}
    &\leq \Delta\ts\sum_{n=0}^{\Ns-1}
    \ABS{ \ash(\ess{n+1},\etas{n+1}) - \as(\ess{n},\etas{n+1}) }
    \nonumber\\[0.25em]
    &\leq
    (1+\alpha^*)\Delta\ts\sum_{n=0}^{\Ns-1}\left(
    2\epsilon_2          \NORM{\ess {n+1}}{1}^2 +
    \frac{1}{2\epsilon_2}\NORM{\etas{n+1}}{1}^2
    \right)
    \nonumber\\[0.5em]
    &\leq
    2\epsilon_2(1+\alpha^*)       \Delta\ts\sum_{n=0}^{\Ns-1}\NORM{\ess {n+1}}{1}^2 +
    \frac{1+\alpha^*}{2\epsilon_2}\Delta\ts\sum_{n=0}^{\Ns-1}\NORM{\etas{n+1}}{1}^2
    \nonumber\\[0.5em]
    &\leq
    2\epsilon_2(1+\alpha^*)\Delta\ts\sum_{n=0}^{\Ns-1}\as(\ess{n+1},\ess{n+1})
    + \frac{1+\alpha^*}{2\epsilon_2}
    \hh^2\NORM{\us}{\LS{\infty}\big(0,T;\HS{2}(\Omega)\big)}^2.
    \label{eq:R2:bound}
  \end{align}

  To estimate the next terms, we need an upper bound for the
  $\LS{2}$-norm and the $\HS{1}$-seminorm of $\thes{n+1}$.
  We recall that $\thes{n+1}=\etas{n+1}-\ess{n+1}$ and
  $\SNORM{\ess{n+1}}{1}^2=\as(\ess{n+1},\ess{n+1})$.
  Using the estimate for the quasi-interpolation operator,
  cf. Lemma~\ref{lemma:quasi-interpolation:B}, we find that
  \begin{subequations}
  \begin{align}
    \NORM{\thes{n+1}}{0}^2
    &\leq 2\NORM{\etas{n+1}}{0}^2 + 2\NORM{\ess{n+1}}{0}^2
    \leq 2\big(\Cs\hh\SNORM{\uss{n+1}}{1}\big)^2 + 2\NORM{\ess{n+1}}{0}^2,
    \label{eq:theta:upper:bound:L2}
    \intertext{and}
    \SNORM{\thes{n+1}}{1}^2
    &\leq 2\SNORM{\etas{n+1}}{1}^2 + 2\SNORM{\ess{n+1}}{1}^2
    \leq 2\big(\Cs\hh\SNORM{\uss{n+1}}{2}\big)^2 + 2\as(\ess{n+1},\ess{n+1}).
    \label{eq:theta:upper:bound:H1}
  \end{align}
  \end{subequations}
  
  \PGRAPH{Estimate of $\TERM{R}{3}$}  
  To estimate $\TERM{R}{3}$, we use result of
  Lemma~\ref{lemma:fs:approximation},
  cf. inequality~\eqref{eq:fs:approximation}, and the Young inequality
  with the real coefficient $\epsilon_3>0$ to obtain:
  \begin{align}
    \ABS{\TERM{R}{3}}
    &\leq
    \Delta\ts\sum_{n=0}^{\Ns-1}
    \ABS{ \scal{\fssh{n+1}-\fss{n+1}}{\thes{n+1}} }
    \leq\Cs\hh^2
    \Delta\ts\sum_{n=0}^{\Ns-1}
    \snorm{\fss{n+1}}{1}\,\snorm{\thes{n+1}}{1}
    \nonumber\\[0.5em]
    &
    \leq\Cs\hh^2
    \left(
    2\epsilon_3
    \Delta\ts\sum_{n=0}^{\Ns-1}
    \snorm{\fss{n+1}}{1}^2
    +
    \frac{1}{2\epsilon_3}
    \Delta\ts\sum_{n=0}^{\Ns-1}
    \,\snorm{\thes{n+1}}{1}^2
    \right)
    \nonumber\\[0.5em]
    &= \ABS{\TERM{R}{31}} + \ABS{\TERM{R}{32}}.
  \end{align}
  We estimate $\TERM{R}{31}$ by using the result of
  Lemma~\ref{lemma:infty:bound}, so that we have
  \begin{align}
    \ABS{\TERM{R}{31}}
    =    2\epsilon_3\Cs\hs^2\Delta\ts\sum_{n=0}^{\Ns-1}\snorm{\fss{n+1}}{1}^2
    \leq 2\epsilon_3\Cs\hs^2\NORM{\fs}{ \LS{\infty}\big(0,T;\HS{1}(\Omega)\big) }^2.
  \end{align}
  We estimate $\TERM{R}{32}$ by noting that
  $\snorm{\thes{n+1}}{1}=\snorm{\uss{n+1}-\Ih\uss{n+1}}{1}\leq2\snorm{\uss{n+1}}{1}$,
  and using the result of Lemma~\ref{lemma:infty:bound}, so that we
  have
  \begin{align}
    \ABS{\TERM{R}{32}}
    &\leq
    \frac{\Cs}{2\epsilon_3}\hh^2\Delta\ts\sum_{n=0}^{\Ns-1}\snorm{\uss{n+1}}{1}^2
    \leq \frac{\Cs}{2\epsilon_3}\hh^2\NORM{\us}{\LS{\infty}\big(0,T;\HS{1}(\Omega)\big)}^2.
  \end{align}
  Using these inequalities, we derive the following upper bound for
  $\TERM{R}{3}$:
  \begin{align}
    \ABS{\TERM{R}{3}}
    \leq
    2\epsilon_3           \Cs\hh^2\NORM{\fs}{\LS{2}\big(0,T;\HS{1}(\Omega)\big)}^2 +
    \frac{\Cs}{2\epsilon_3}  \hh^2\NORM{\us}{\LS{2}\big(0,T;\HS{1}(\Omega)\big)}^2.
    \label{eq:R3:bound}
  \end{align}

  \PGRAPH{Estimate of $\TERM{R}{4}$}
  To derive an upper bound for $\TERM{R}{4}$, we introduce a piecewise
  linear polynomial approximation $\uss{n+1}_{\pi}$ to $\uss{n+1}$,
  use linear polynomial consistency
  property~\eqref{eq:ash:consistency},
  inequality~\eqref{eq:projection}, and the Young inequality with the
  real coefficient $\epsilon_4>0$:
  \begin{align*}
    &\ABS{ \as(\uss{n+1},\thes{n+1}) - \ash(\uss {n},\thes{n+1}) }
    =    \ABS{ \as(\uss{n+1}-\uss{n+1}_{\pi},\thes{n+1}) - \ash(\uss{n}-\uss{n+1}_{\pi},\thes{n+1}) }
    \\[0.5em]
    &\quad\leq (1+\alpha^*)\NORM{\uss{n+1}-\uss{n+1}_{\pi}}{1,\hh}\,\NORM{\thes{n+1}}{1}
    \leq (1+\alpha^*)
    \bigg(
    \frac{1}{2\epsilon_4}\NORM{\uss{n+1}-\uss{n+1}_{\pi}}{1,\hh}^2+
    2\epsilon_4\NORM{\thes{n+1}}{1}^2
    \bigg)
    \\[0.5em]
    &\quad\leq (1+\alpha^*)\bigg(
    \frac{\Cs}{2\epsilon_4}\hh^2\SNORM{\uss{n+1}}{2}^2
    +2\epsilon_4\SNORM{\thes{n+1}}{1}^2
    \bigg).
  \end{align*}
  Using inequality~\eqref{eq:theta:upper:bound:H1} yields the desired
  upper bound for $\TERM{R}{4}$:
  \begin{align}
    \ABS{\TERM{R}{4}}
    &\leq \Delta\ts\sum_{n=0}^{\Ns-1}\ABS{ \as(\uss{n+1},\thes{n+1}) - \ash(\uss {n},\thes{n+1}) }
    \nonumber\\[0.5em]
    &\leq
    \frac{\Cs(1+\alpha^*)}{\epsilon_4}\hh^2\NORM{\us}{ \LS{\infty}\big(0,T;\HS{2}(\Omega)\big)}^2 +
    4(1+\alpha^*)\,\epsilon_4\Delta\ts\sum_{n=0}^{\Ns-1}\as(\ess{n+1},\ess{n+1}).
    \label{eq:R4:bound}
  \end{align}
  
  \PGRAPH{Estimate of $\TERM{R}{5}$}
  To derive an upper bound for $\TERM{R}{5}$, we introduce a piecewise
  linear polynomial approximation $\partial\uss{n}_{\pi}$ to
  $\partial\uss{n}$, and use the
  relation~\eqref{eq:global:consistency:discrepancy:def}
  \begin{align*}
    \scal{\partial\uss{n}}{\thes{n+1}} - \msh(\partial\uss{n},\thes{n+1})
    =
    \scal{\partial\uss{n}-\partial\uss{n}_{\pi}}{\thes{n+1}} - \msh(\partial\uss{n}-\partial\uss{n}_{\pi},\thes{n+1})
    + \calMh(\thes{n+1},\partial\uss{n}_{\pi})
  \end{align*}
  where the bilinear form $\calMh(\cdot,\cdot)$ is the consistency
  discrepancy
  that stems out of
  using the projector $\PiLS{}$ in the definition of
  $\msh(\cdot,\cdot)$.
  Using inequality~\eqref{eq:projection}, to estimate for the
  approximation error $\partial(\uss{n+1}-\uss{n+1}_{\pi})$, and the
  Young inequality with the real coefficient $\epsilon_5>0$:
  \begin{align*}
    &\ABS{ \scal{\partial(\uss{n}-\uss{n}_{\pi})}{\thes{n+1}} - \msh(\partial(\uss{n}-\uss{n}_{\pi}),\thes{n+1}) }
    \leq (1+\mu^*)\NORM{\partial(\uss{n}-\uss{n}_{\pi})}{0}\,\NORM{\thes{n+1}}{0}
    \\[0.5em]
    &\quad
    \leq (1+\mu^*)
    \bigg(
    \frac{1}{2\epsilon_5}\NORM{\partial(\uss{n}-\uss{n}_{\pi})}{0}^2+
    2\epsilon_5\NORM{\thes{n+1}}{0}^2
    \bigg)
    \leq (1+\mu^*)\bigg(
    \frac{\Cs}{2\epsilon_5}\hh\SNORM{\partial\uss{n}}{1}^2
    +2\epsilon_5\NORM{\thes{n+1}}{0}^2
    \bigg).
  \end{align*}

  We estimate the consistency discrepancy at the time instant
  $\ts^{n+1}$ by applying the result of
  Lemma~\ref{lemma:msh:approx:consistency:global} and apply the Young inequality
  with the real coefficient $\epsilon_6$
  \begin{align*}
    \calMh(\thes{n+1},\partial\uss{n}_{\pi})
    \leq \Cs\hh^2\NORM{\partial\uss{n}_{\pi}}{0}\SNORM{\thes{n+1}}{2}
    \leq \Cs\hh^2\left(
    \frac{1}{2\epsilon_6}\NORM{\partial\uss{n}_{\pi}}{0}^2+
    2\epsilon_6\SNORM{\thes{n+1}}{2}^2
    \right).
  \end{align*}
  The continuity of the projection operator $(\,\cdot\,)_{\pi}$
  implies that
  $\NORM{\partial\uss{n}_{\pi}}{0}\leq\NORM{\partial\uss{n}}{0}$.
  Then, we recall that
  $\partial\uss{n}=(\uss{n+1}-\uss{n})\slash{\Delta\ts}$ and apply the
  Jensen inequality to obtain:
  \begin{align*}
    \NORM{\partial\uss{n}_{\pi}}{0}^2
    \leq \NORM{\partial\uss{n}}{0}^2
    \leq \NORM{\frac{\uss{n+1}-\uss{n}}{\Delta\ts}}{0}^2
    \leq \frac{1}{\Delta\ts^2}\NORM{\int_{\ts^{n}}^{\ts^{n+1}}\frac{\partial\us}{\partial\ts}\dt}{0}^2
    \leq \frac{1}{\Delta\ts}\int_{\ts^{n}}^{\ts^{n+1}}\NORM{ \frac{\partial\us}{\partial\ts} }{0}^2\dt
  \end{align*}
  A straightforward calculation yields
  \begin{align*}
    \Delta\ts\,\sum_{n=0}^{\Ns-1}\frac{1}{\Delta\ts}\int_{\ts^{n}}^{\ts^{n+1}}\NORM{\frac{\partial\us}{\partial\ts}}{0}^2\dt
    = \NORM{ \frac{\partial\us}{\partial\ts} }{ \LS{2}\big(0,\Ts;\LS{2}(\Omega)\big) }^2.
  \end{align*}
  Moreover, we note that
  $\SNORM{\thes{n+1}}{2}\leq\Cs\NORM{\uss{n+1}}{2}$ and a
  straightforward calculation yields
  \begin{align*}
    \Delta\ts\,\sum_{n=0}^{\Ns-1}\SNORM{\thes{n+1}}{2}^2
    \leq \Ts\NORM{\us}{\LS{\infty}(0,\Ts;\HS{2}(\Omega))}^2.
  \end{align*}
  Using these inequalities we find that
  \begin{align}
    \Delta\ts\sum_{n=0}^{\Ns-1}\calMh(\thes{n+1},\partial\uss{n}_{\pi})
    \leq \Cs(1+\mu^*)\,\hh^2\left(
    \frac{1}{2\epsilon_6}
    \NORM{ \frac{\partial\us}{\partial\ts} }{ \LS{2}(0,\Ts;\LS{2}(\Omega)) }^2 + 
    2\epsilon_6\,\Ts\NORM{\us}{\LS{\infty}(0,\Ts;\HS{2}(\Omega))}^2
    \right).
    \label{eq:proof:R5:00}
  \end{align}
  Using inequalities~\eqref{eq:theta:upper:bound:L2}
  and~\eqref{eq:proof:R5:00}, and the result of
  Lemma~\ref{lemma:L2:estimate:u}, cf.
  inequality~\eqref{eq:L2:estimate:u}, the upper bound for
  $\TERM{R}{5}$ finally becomes:
  \begin{align}
    \ABS{\TERM{R}{5}}
    &\leq \Delta\ts\sum_{n=0}^{\Ns-1}\ABS{ \scal{\uss{n+1}}{\thes{n+1}} - \msh(\uss {n},\thes{n+1}) }
    \leq
    4(1+\mu^*)\,\epsilon_5\Delta\ts\sum_{n=0}^{\Ns-1}\as(\ess{n+1},\ess{n+1})
    \nonumber\\[0.5em]
    &\phantom{\leq}
    +\Cs(1+\mu^*)\left( \frac{1}{2\epsilon_5} +  \frac{1}{2\epsilon_6} \right)
    \hh^2
    \bigg( 
    \NORM{ \frac{\partial\us}{\partial\ts} }{ \LS{2}\big(0,\Ts;\LS{2}(\Omega)\big) }^2 +
    (1+\Ts)\NORM{\us}{\LS{\infty}\big(0,\Ts;\HS{2}(\Omega)\big)}^2
    \bigg).
    \label{eq:R5:bound}
  \end{align}

  \medskip
  We note again that
  $\NORM{\ess{\Ns}}{0}^2\leq\max_{1\leq\ns\leq\Ns}\NORM{\ess{\ns}}{0}^2$
  in $\TERM{S}{1}$.
  Also, we note that the five terms $\TERM{R}{j}$, $j=1,\ldots,5$,
  provide an upper bound of the right-hand side
  of~\eqref{eq:proof:main} with the following structure
  \begin{align}
    \sum_{j=1}^{5}\ABS{\TERM{R}{j}}
    &\leq
    \Css_1\max_{1\leq\ns\leq\Ns}\NORM{\ess{n}}{0}^2 +
    \Css_2\Delta\ts\sum_{n=0}^{\Ns-1}\as(\ess{n+1},\ess{n+1}) +
    \Css_1\NORM{\ess{0}}{0}^2
    \nonumber\\
    &\phantom{\leq}
    +\Css_3\hh^2
    \left(
    \NORM{\us}{\LS{\infty}\big(0,T;\HS{2}(\Omega)\big)}^2 +
    \NORM{\frac{\partial\us}{\partial\ts}}{\LS{2}\big(0,T;\HS{1}(\Omega)\big)}^2 +
    \NORM{\fs}{\LS{\infty}\big(0,T;\LS{2}(\Omega)\big)}^2
    \right),
    \label{eq:bound:R1-R5}
  \end{align}
  where we set the constants $\Css_j$, $j=1,\ldots,4$, as
  \begin{align*}
    \Css_1 &= 2(1+\mu^*)\epsilon_1,\qquad
    \Css_2  = 2(1+\mu^*)(2\epsilon_1+2\epsilon_5)+2(1+\alpha^*)(2\epsilon_2+2\epsilon_4),\\[0.5em]
    \Css_3 &= (1+\mu^*)\bigg( \frac{1}{2\epsilon_1} + \frac{1}{2\epsilon_3} + \frac{1}{2\epsilon_5} + \frac{1}{2\epsilon_6} \bigg),\qquad 
    \Css_4  = (1+\alpha^*)\Big( \frac{1}{2\epsilon_2} + \frac{1}{2\epsilon_4} \Big).
  \end{align*}
  We note that the terms $\TERM{R}{j}$ does not involve any additional
  error contribution in time.
  This remarkable fact is consistent with the fact that the virtual
  element method affects only the space discretization.

  Inequality~\eqref{eq:convergence:theorem} and the theorem assertion
  follow by substituting the error bounds~\eqref{eq:bound:S1-S4}
  and~\eqref{eq:bound:R1-R5} in~\eqref{eq:proof:main}, choosing a
  suitable value for the Young coefficients $\epsilon$, $\epsilon_j$,
  $j=1,\ldots,6$, and \RED{taking the square root of both sides of the
    resulting inequality}.
  Finally, we note that all constants from the bounds of terms
  $\TERM{S}{j}$ and $\TERM{R}{j}$ are independent of $\hh$ and
  $\Delta\ts$, and a unique constant can be set, which is taken into
  account $\widetilde{\Cs}$ in~\eqref{eq:proof:main}, and is
  proportional to $\max(\mu^*,\alpha^*)\slash{\min(\mu_*,\alpha_*)}$.
\end{proof}


\section{Numerical Experiments} 
\label{sec:numerical:experiments}

\begin{figure}[t]
  \centering
  \begin{tabular}{ccc}
    \includegraphics[width=0.25\textwidth]{./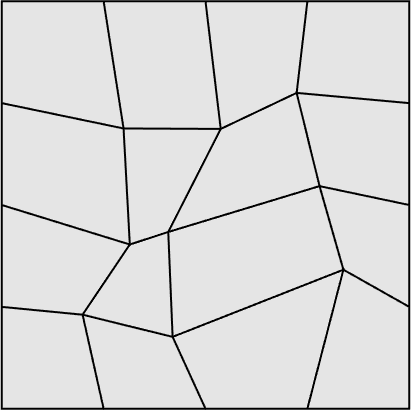} &\quad
    \includegraphics[width=0.25\textwidth]{./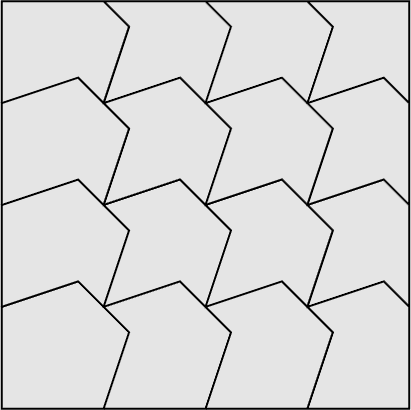}  &\quad
    \includegraphics[width=0.25\textwidth]{./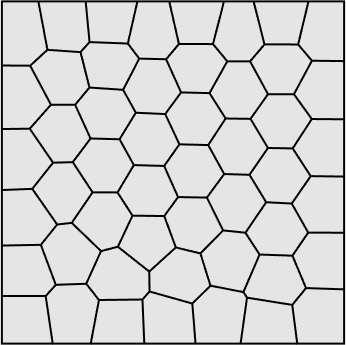} \\
    $(a)$ & $(b)$ & $(c)$
  \end{tabular}
  \caption{a representative mesh of the three mesh families considered
    in the test case: $(a)$ distorted squares; $(b)$ nonconvex
    polygons; $(c)$ Voronoi tesselation.}
  \label{fig:meshes}
\end{figure}

In this section, we apply the virtual element method developed in the
previous sections to the solution of an oscillating circle in two
dimensions.
For the numerical \BLUE{analysis}, domain $\Omega$ is discretized by three
different mesh families, respectively composed by distorted squares,
nonconvex polygons, and smoothed Voronoi tesselations.
Figure \ref{fig:meshes} shows a representative mesh for each family.
The distorted squares and non-convex meshes are based on in-house code
developed in Matlab~\cite{MATLAB:2020}.
The two-dimensional polygonal meshes are generated using the built-in
Matlab function \texttt{voronoin} and the functions in the modules
PolyTop~\cite{Talischi-Paulino-Pereira-Menezes:2012} and
PolyMesher~\cite{Talischi-Paulino-Pereira-Menezes:2012b}.

Let $\essh{n} = \Ussh{n} - \Ih\us(\ts^n)$ be the error in the
approximation of the interpolation of $\us(\ts^n)$ by the virtual
element solution (which is the quantity $\thes{n}$ that is used in the
proof of Theorem~\ref{theorem:convergence}).
We measure the relative approximation error according to this
definition:
\begin{align*}
  \calE
  = \underset{\ns\in[0,\Ns]}{\max} \calE_{0}^{n}
  + \bigg( \Delta\ts\sum_{\ns=0}^{\Ns}         \abs{\calE_{1}^{n}}^2 \bigg)^{\frac12}.
\end{align*}
where 
\begin{align*}
  \calE_{0}^{n} = \left( \frac{ \msh\big(\essh{n},\essh{n}\big) }{ \msh\big(\Ih\us(\ts^n),\Ih\us(\ts^n)\big) }\right)^{\frac12}
  \quad\textrm{and}\qquad
  \calE_{1}^{n} = \left( \frac{ \ash\big(\essh{n},\essh{n}\big) }{ \ash\big(\Ih\us(\ts^n),\Ih\us(\ts^n)\big) }\right)^{\frac12}.
\end{align*}
Before presenting the numerical results, we highlight the
implementation process of the new operator which is defined in
\eqref{eq:PiLSP:vector:def}.
The major difference with the standard VEM is that we use $\PiLSP{}$
instead of $\PizP{}$ in the discretization of the time-derivative term
and the right-hand side term.
We recall that the former is the orthogonal projector onto the
subspace of linear polynomials in every polygonal element with respect
to the discrete inner product~\eqref{eq:discrete:inner:product} and
the latter is the $\LS{2}$ projection operator defined
in~\eqref{eq:orthogonal:projector}.
Our implementation of the local mass matrix on each element $\P$
\BLUE{proceeds in} three steps.
First, we compute the projection matrix
\begin{align}
  \matPiLSP{} = \big(\matD^T\matD\big)^{-1}\matD^T,
  \label{eq:LS:computation}
\end{align}
where we recall that $\matD$ is the matrix collecting the degrees of
freedom of the monomial basis~\eqref{eq:scaled:monomials} and is
defined in~\eqref{eq:matD:def}.
Second, we define the elemental mass matrix $\matM$ using
\eqref{eq:LS:computation}:
\begin{align*}
  \matM = \big(\matPiLSP{}\big)^T\matH\,\matPiLSP{}
  \quad\textrm{where}\qquad
  \restrict{\matH}{ij} = \int_{\P}\ms_i(\xs,\ys)\ms_j(\xs,\ys)\dx\dy
  \quad i,j=1,2,3.
\end{align*}
Third, we assemble the global mass matrix as in the standard finite
element method.
The right-hand side term is also computed using the projection
operator $\PiLSP{}$ in every mesh element.
According to~\eqref{eq:RHS:local:def}, we consider
\begin{align*}
  \bvh = 
  \big(\matPiLSP{}\big)^T\fvh
  \quad\textrm{where}\qquad
  \restrict{\fvh}{i} = \int_{\P}\ms_i(\xs,\ys)\fs(\xs,\ys)\dx\dy.
\end{align*}
The implementation of the stiffness matrix from the bilinear form
$\ash(\cdot,\cdot)$ is carried out as usual in the VEM.
The primary advantage of using the projection operator $\PiLSP{}$ is
that this operator is computable on the original virtual element space
\cite{BeiraodaVeiga-Brezzi-Cangiani-Manzini-Marini-Russo:2013},
whereas we would need the modified virtual element space
\cite{Ahmad-Alsaedi-Brezzi-Marini-Russo:2013} to compute the regular
$\LS{2}$ projection operator $\PizP{}$.

For the numerical computations, we consider the computational domain
$\Omega=[-1,1]^2$ and the time interval, $[0,T] = [0,1/2]$.
We define the noncontact subdomain $\Omega^{+}(t)$ and the contact set
$\Omega^{0}(t)$ as:
\begin{align*}
  \Omega^{+}(\ts) = \big\{ (\xs,\ys)\in\Omega:\rs(\ts)>\rs_0(\ts) \big\}
  \quad\textrm{and}\qquad
  \Omega^{0}(\ts) = \big\{ (\xs,\ys)\in\Omega:\rs(\ts)\leq\rs_0(\ts) \big\},
\end{align*}
where $\rs(\ts)$ and $\rs_0(\ts)$ are respectively given by:
\begin{align*}
  \rs  (\ts) &= \Big( \big(\xs-1/3\cos(4\pi\ts)\big)^2 + \big(\ys-1/3\sin(4\pi\ts)\big)^2\Big)^{\frac12},\\[0.5em]
  \rs_0(\ts) &= 1/3 + 0.3\sin(4\pi\ts).
\end{align*}
The exact solution $u(x,y,t)$ is given by:
\begin{equation*}
  \us(\xs,\ys,\ts) =
  \begin{cases}
    & \dfrac{1}{2}~\Big( \rs^2(\ts)-\rs_0^2(\ts)\Big)^2 \quad           \text{if} \ (\xs,\ys)\in\Omega^{+}(\ts),\\[0.5em]
    & 0                                                \hspace{3.015cm}\text{if} \ (\xs,\ys)\in\Omega^{0}(\ts).
  \end{cases}
  \label{eqn:movcirc_exaana}
\end{equation*}

The initial and boundary conditions can be computed from the exact
solution $\us(\xs,\ys,\ts)$, see~\eqref{eqn:movcirc_exaana}.
The force function is given by:
\begin{equation*}
  \fs(\xs,\ys,\ts) =
  \begin{cases}
    \phantom{-}
    4 \Big[ r_0^2(t)-2r^2(t)-1/2(r^2(t)-r_0^2(t)) \Big( p(t)+r_0(t)~r_0^{'}(t) \Big] & \quad\text{if} \ (\xs,\ys) \in \Omega^{+}(\ts),\\
    -4 r_0^2(t) \Big[ 1-r^2(t)+r_0^2(t)\Big]                                        & \quad\text{if} \ (\xs,\ys) \in \Omega^{0}(\ts),
  \end{cases}
\end{equation*}
Here, $\ps(\ts)$ is defined as
\begin{equation*}
  \ps(\ts) = (x-c_1(t)) c_1'(t) + (y-c_2(t)) c_2'(t),
\end{equation*}
where,
\begin{equation*}
  c_1(t) = \dfrac{1}{3} \cos(4\pi\ts)
  \quad\textrm{and}\quad
  c_2(t) = \dfrac{1}{3} \sin(4\pi\ts)
\end{equation*}
are the centers of the free boundary, which is an oscillatory circle
with radius $r_0(t)$.
It is assumed that the circle is moving with respect to a reference
circle of radius $r_1$ at the origin.
The computations are performed over the three mesh families shown
in Figure \ref{fig:meshes}.
\begin{figure}[t]
  \begin{tabular}{l}
    \includegraphics[width=0.5\textwidth]{./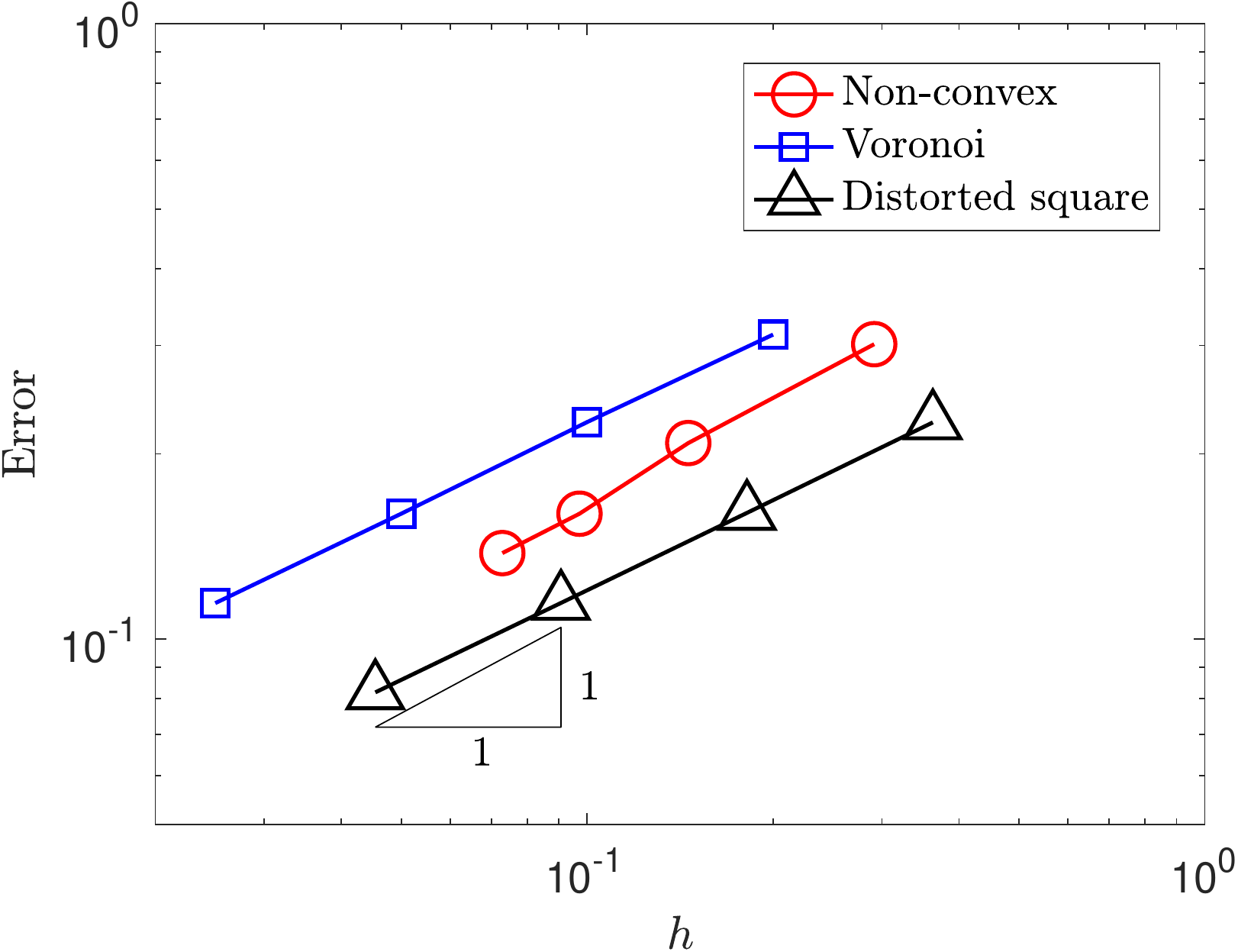}\label{fig:spacecon} 
    \includegraphics[width=0.49\textwidth]{./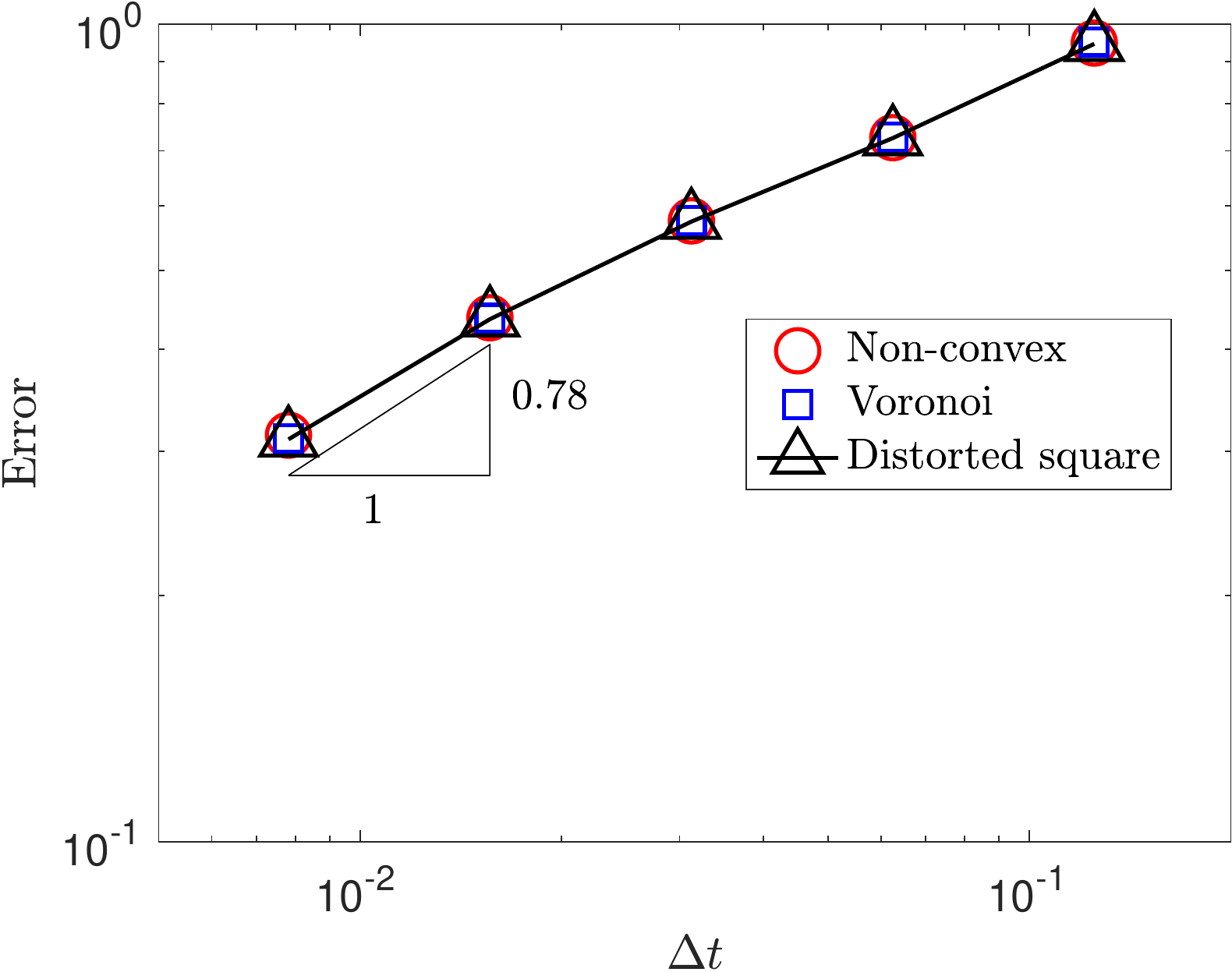}\label{fig:timecon}
  \end{tabular}
  \caption{Convergence of the error \RED{(see
      equation~\eqref{eq:convergence:theorem})} with respect to the
    mesh refinement for a constant $\Delta\ts=10^{-3}$ (top panel) and
    halving $\Delta\ts=$ on a mesh with fixed $\hh$ (bottom panel).}
  \label{fig:convplots}
\end{figure}
\begin{figure}[htpb!]
  \centering
  \includegraphics[width=\textwidth]{./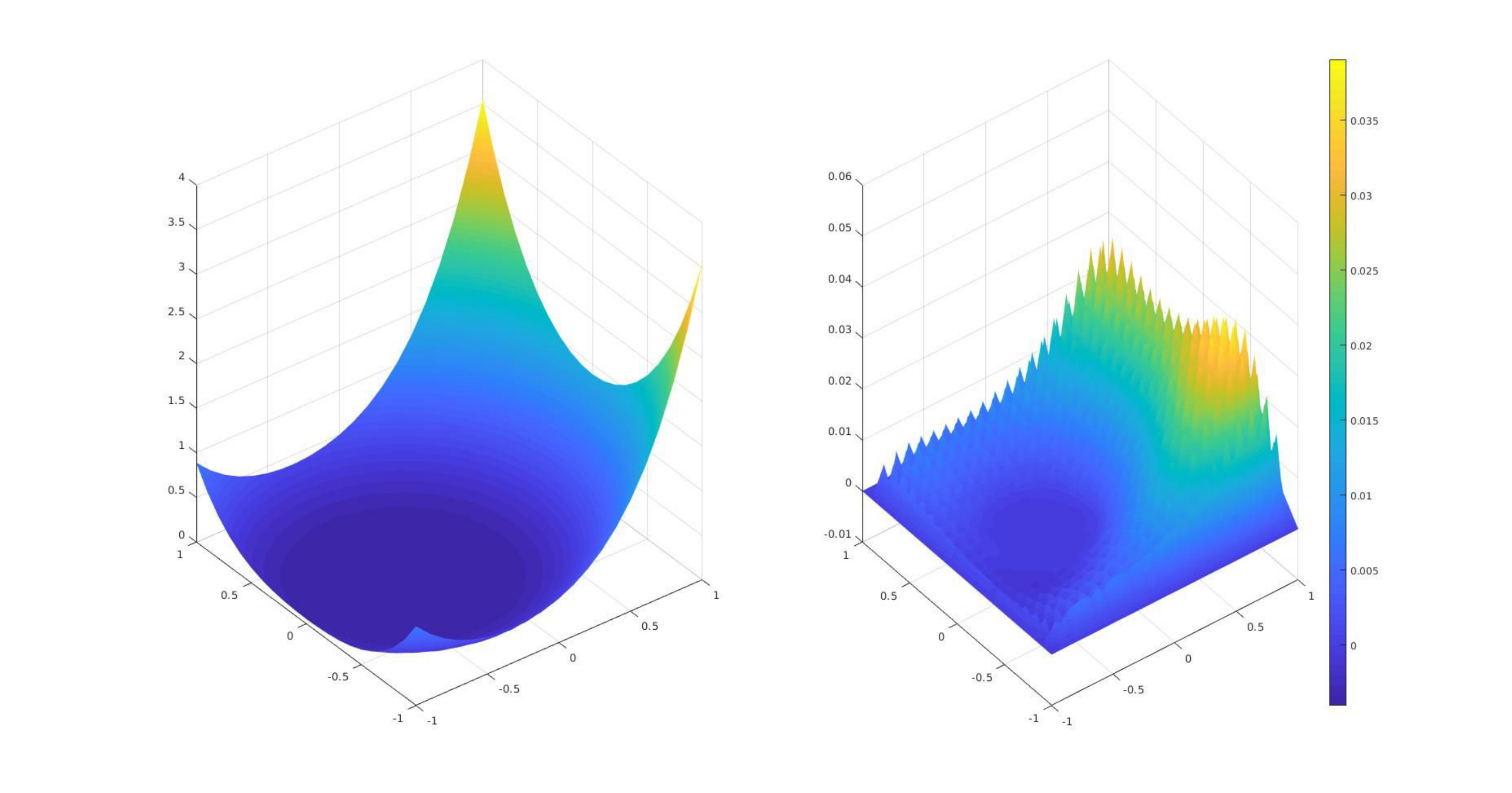}
  \caption{Numerical solution (left panel) and relative approximation
    error (right panel) $u-u_h$ at time $T=.25$.}
  \label{fig:soln:error}
\end{figure}
To study the convergence in space, we consider the time increment
$\Delta\ts=10^{-3}$ and a mesh sequence with initial mesh size as:
$(a)$ distorted mesh, $\hh\approx 0.36$; $(b)$ nonconvex mesh,
$\hh\approx 0.30$; $(c)$ Voronoi tesselation, $\BLUE{\hh=0.20}$.
At each mesh refinement we halve $\hh$.
To study the convergence in time, we halve the time step $\Delta\ts$
at each time refinement starting with $\BLUE{\Delta t=0.125}$ and carry out all
calculation on the following meshes: $(a)$ distorted square mesh with
$\hh=0.045$; $(b)$ nonconvex mesh with $\hh=0.073$; $(c)$ Voronoi mesh
with $\BLUE{\hh=0.025}$.
We choose these mesh sizes in order that the total number of degrees of
freedom on the various meshes is almost the same.
The convergence of the error with mesh size and time increment is
shown in Figures \ref{fig:convplots} for the three mesh families
considered in this test.
The triangles close to the error curves show the numerically computed
rate of convergence.
It can be inferred that the error decreases at the optimal convergence
rate in both the space and temporal variable with order $1$ and
$\approx0.75$, respectively, in agreement
with~Theorem~\ref{theorem:convergence}.
\BLUE{ Finally, Figure~\ref{fig:soln:error} shoes the numerical
  solution at (left panel) and the corresponding distribution in space
  of the relative approximation error (right) the final time $T=0.25$.
}


\section{Conclusions}
\label{sec:conclusions}

We designed, analyzed and numerically tested a virtual element method
for solving the parabolic variational inequality problem in two
dimensions over unstructured polygonal meshes.
Several aspects make this design challenging.
In particular, we used the Maximum and Minimum Principle Theorem to
ensure that a virtual element function is nonnegative if all its
degrees of freedom are nonnegative.
We introduced an approximate orthogonal projector onto linear
polynomials, whose approximation properties are carefully investigated
in this paper, to compute the mass matrix.
The convergence analysis requires a nonnegative quasi-interpolation
operator, whose construction on polygonal elements is also discussed
in the paper.
We proved a convergence theorem and estimated that the convergence
rate \textcolor{blue}{is} proportional to $\hh$ (the mesh size parameter) and
$\Delta\ts^{\frac34}$ (the time step parameter).
These results are in perfect agreement with a previous finite element
formulation from the literature working in triangular meshes \textcolor{blue}{\cite{Johnson:1976}}.
We assessed the behavior of the VEM against a manufactured solution
problem on a two-dimensional domain defined by an oscillating circle
using three different polygonal mesh families including distorted
squares, nonconvex elements, and Voronoi tesselations.
All the numerical convergence rates reflected by the slope of the
error curves in our log-log plots agree with the rates that are
expected from the theory.


\section*{Acknowledgments}
\RED{
We acknowledge the anonymous Reviewers for their invaluable comments
and, in particular, for suggesting us an alternative proof of the
existence of the positive quasi-interpolant, which we added to the
paper in a final appendix.
}
GM has been partially supported by the ERC Project CHANGE, which has
received funding from the European Research Council (ERC) under the
European Unions Horizon 2020 research and innovation programme (grant
agreement No 694515).
Dibyendu Adak was partially supported by the Institute Postdoctoral
fellowship at Department of Mechanical Engineering, Indian Institute
of Technology-Madras.



\appendix
\section*{\RED{Appendix: An alternative construction of the quasi-interpolation operator}}

\RED{
In this section, we would like to outline an alternative proof of
Lemma~4.5.
For each polygonal element $\P$, we define the quasi-interpolant
operator $\Ih\restrict{\us}{\P}\in\HS{1}(\P)$ as the solution of the
following Poisson's equation with Dirichlet boundary condition.
\begin{align*}
  \Delta\Ih\us &= 0\phantom{\us_{C}} \quad \text{in~}\P, \\
  \Ih\us       &= \us_{C}\phantom{0} \quad \text{on~}\partial\P,
\end{align*}
where $\us_{C}$ is the \textit{Cl\'ement} interpolation operator on
the sub-triangulation of the mesh $\Th$ by joining each vertex of
$\partial\P$ with with the barycentre of $\P$.
Following \cite{Mora-Rivera-Rodriguez:2015}, it can be proved that
$\Ih\us$ approximates $\us$ optimally, i.e.
\begin{align*}
  \norm{\us-\Ih\us}{0,\P} + \hP\norm{\us-\Ih\us}{1,\P} \leq
  \Cs\hP^2\snorm{\us}{2,\P}.
\end{align*}
Furthermore, from the construction, on each node $\nu$,
$\Ih\us(\nu)=\us_C(\nu)$, and
\begin{align*}
  \us_C(\nu) = \frac{1}{|\omega_{\nu}|}\int_{\omega_{\nu}}\us\,d\xv,
\end{align*}
where $\omega_{\nu}$ is the patch of the node $\nu$ on the
sub-triangulation of $\Th$.
Consequently, we have $\Ih\us(\nu)\geq0$, when $\us\geq0$ almost
everywhere on $\Omega$.
Moreover, $\Ih\us$ is a harmonic function.
Therefore, from corollary~\eqref{lemma:VEM:nonnegative-subset:global},
we emphasize that $\Ih\us\in\mathcal{K}_{\hh}$ if $\us\in\mathcal{K}$.
}

\end{document}